\definecolor{darkdgmcolor}{rgb}{0.0, 0.0, 0.8}
\definecolor{darkgreen}{rgb}{0.0, 0.8, 0.0}
\definecolor{purple}{RGB}{153,50,204}
\definecolor{dgmcolor}{RGB}{255,20,147}
\definecolor{barccolor}{RGB}{20,147,255}
\newcommand{\linterval}{[}
\newcommand{\rinterval}{]}
\newcommand{\cupmodule}{\Phi} 
\newcommand{\invariant}{\mathbf{I}} 
\newcommand{\N}{\mathbb{N}}
\newcommand{\bbR}{\mathbb{R}}
\newcommand{\bbZ}{\mathbb{Z}}
\newcommand{\bbS}{\mathbb{S}} 
\newcommand{\bbT}{\mathbb{T}} 
\newcommand{\barc}{\mathbf{barc}}
\newcommand{\Afunc}{A_\bullet}
\newcommand{\Bfunc}{B_\bullet}
\newcommand{\Zfunc}{Z_\bullet}
\newcommand{\Xfunc}{X_{\bullet}}
\newcommand{\Yfunc}{Y_\bullet}
\newcommand{\Ffunc}{F_\bullet}
\newcommand{\Gfunc}{G_\bullet}
\newcommand{\Mfunc}{M_\bullet}
\newcommand{\Rfunc}{R_\bullet}
\newcommand{\Nfunc}{N_\bullet}
\newcommand{\bfJ}{\mathbf{J}}
\newcommand{\bfH}{\mathbf{H}}
\newcommand{\VR}{\operatorname{VR}}
\newcommand{\Vflag}{V_\star}
\newcommand{\Wflag}{W_\star}
\newcommand{\pflag}{V_{\star,\bullet}}
\newcommand{\ob}{\operatorname{Ob}}
\newcommand{\mor}{\operatorname{Mor}}
\newcommand{\id}{\operatorname{id}}
\newcommand{\diam}{\operatorname{diam}}
\newcommand{\image}{\operatorname{im}}
\newcommand{\fdim}{\mathbf{dim}}
\newcommand{\rank}{\mathbf{rk}}
\newcommand{\len}{\mathbf{len}}
\newcommand{\cupprod}{\mathbf{cup}}
\newcommand{\dgm}{\mathbf{dgm}}
\newcommand{\cat}{\mathbf{cat}}
\newcommand{\dimension}{\mathbf{dim}}
\newcommand{\catC}{\mathbcal{C}}
\newcommand{\catN}{\mathbcal{N}}
\newcommand{\Set}{\mathbcal{S\!e\!t}}
\newcommand{\Vect}{\mathbcal{V\!e\!c}}
\newcommand{\Flag}{\mathbcal{F\!l\!a\!g}}
\newcommand{\grFlag}{\mathbcal{GF\!l\!a\!g}}
\newcommand{\GVect}{\mathbcal{GV\!e\!c}}
\newcommand{\Ring}{\mathbcal{GR\!i\!n\!g}}
\newcommand{\Alg}{\mathbcal{GA\!l\!\!g}}
\newcommand{\Top}{\mathbcal{T\!\!o\!p}}
\newcommand{\Int}{\mathbcal{I\!nt}}
\newcommand{\field}{K}
\newcommand{\di}{d_{\mathrm{I}}}
\newcommand{\db}{d_{\mathrm{B}}}
\newcommand{\dhi}{d_{\mathrm{HI}}}
\newcommand{\de}{d_{\mathrm{E}}}
\newcommand{\dgh}{d_{\mathrm{GH}}}
\newcommand{\bsigma}{\boldsymbol{\sigma}}
\newcommand{\sigmadgmX}{\dgm\left(\cupprod(\Xfunc),\bsigma\right)}
\newcommand{\Coordinate}[2]
{ \coordinate (#1) at (#2);
}
\theoremstyle{thmstyleone}
\newtheorem{theorem}{Theorem}
\newtheorem{proposition}{Proposition}[section]
\newtheorem{lemma}[proposition]{Lemma}
\newtheorem{corollary}[proposition]{Corollary}
\newtheorem{definition}[proposition]{Definition}
\newtheorem{example}[proposition]{Example}
\newtheorem{remark}[proposition]{Remark}
\pgfplotsset{compat=1.18}
\begin{document}

\title[Persistent Cup Product Structures and Related Invariants]{Persistent Cup Product Structures and Related Invariants}

\author[1,2]{\fnm{Facundo} \sur{M\'emoli}}\email{memoli@math.osu.edu}
\equalcont{These authors contributed equally to this work.}

\author*[3]{\fnm{Anastasios} \sur{Stefanou}}\email{stefanou@uni-bremen.de}

\author[1]{\fnm{Ling} \sur{Zhou}}\email{zhou.2568@osu.edu}
\equalcont{These authors contributed equally to this work.}

\affil[1]{\orgdiv{Department of Mathematics}, \orgname{The Ohio State University}, \orgaddress{
\city{ Columbus}, \postcode{43210}, \state{Ohio}, \country{USA}}}

\affil[2]{\orgdiv{Department of Computer Science and Engineering}, \orgname{The Ohio State University}, \orgaddress{ 
\city{ Columbus}, \postcode{43210}, \state{Ohio}, \country{USA}}}

\affil[3]{\orgdiv{Department of Mathematics and Computer Science}, \orgname{University of Bremen}, \orgaddress{ 
\city{Bremen}, \postcode{28359}, \country{Germany}}}

\abstract{One-dimensional persistent homology is arguably the most important and heavily used computational tool in topological data analysis. Additional information can be extracted from datasets by studying multi-dimensional persistence modules and by utilizing cohomological ideas, e.g.~the cohomological cup product. \\

In this work, given a single parameter filtration, we investigate a certain 2-dimensional persistence module structure associated with persistent cohomology, where one parameter is the cup-length $\ell\geq0$ and the other is the filtration parameter. This new persistence structure, called the \emph{persistent cup module}, is induced by the cohomological cup product and adapted to the persistence setting.
Furthermore, we show that this persistence structure is stable.
By fixing the cup-length parameter $\ell$, we obtain a 1-dimensional persistence module, called the persistent $\ell$-cup module, and again  show it is stable in the interleaving distance sense, and study their associated generalized persistence diagrams.\\

In addition, we consider a generalized notion of a \emph{persistent invariant},  which  extends both the \emph{rank invariant} (also referred to as \emph{persistent Betti number}), Puuska's rank invariant induced by epi-mono-preserving invariants of abelian categories, and the recently-defined \emph{persistent cup-length invariant}, and  we establish their stability. This generalized notion of persistent invariant also enables us to lift the Lyusternik-Schnirelmann (LS) category of topological spaces to a novel stable persistent invariant of filtrations, called the \emph{persistent LS-category invariant}. 
}

\keywords{cup product, persistence modules, invariant, LS-category, stability, Gromov-Hausdorff distance}

\maketitle

\tableofcontents

\medskip
\noindent\textbf{Acknowledgements.} 
FM and LZ were partially supported by the NSF through grants RI-1901360, CCF-1740761, and CCF-1526513, and DMS-1723003. AS was supported by NSF through grants CCF-1740761,  
DMS-1440386, 
RI-1901360 and the Dioscuri program initiated by the Max Planck Society, jointly managed
with the National Science Centre (Poland), and mutually funded by the
Polish Ministry of Science and Higher Education and the German Federal
Ministry of Education and Research. We thank Marco Contessoto for discussion of materials described in \textsection \ref{sec:Persistent-cup-length}.

\section{Introduction}

\medskip
\noindent\textbf{Persistent Homology in TDA.} 
In \textit{Topological Data Analysis (TDA)}, one studies the evolution of homology across a filtration of spaces, called \textit{persistent homology} \citep{frosini1990distance,frosini1992measuring,robins1999towards,zomorodian2005computing,cohen2007stability,edelsbrunner2008Persistent,carlsson2009topology,carlsson2020persistent}. 
Persistent homology is able to extract both the time when a topological feature (e.g.~a component, loop, cavity) is `born' and the time when it `dies'.
The collection of these birth-death pairs (real intervals) constitute the \textit{barcode}, also called the \textit{persistence diagram}, of the filtration (depending on the manner in which they are visualized).

\medskip
\noindent\textbf{Cohomology Rings in TDA.}
In the case of \textit{cohomology}, which is dual to the case of homology for a given field $K$, one studies linear functions 
on (homology) chains, known as \textit{cochains}. Cohomology has a graded ring structure, inherited from the \textit{cup product} operation on cochains, denoted by $\smile:\bfH^p( X) \times\bfH^q( X)\to \bfH^{p+q}( X)$ for a space $ X$ and dimensions $p,q\geq 0$; see \citep[Sec.~~48 and Sec.~~68]{munkres1984elements} and \citep[Ch. 3, Sec.~3.D]{hatcher2000}. This makes the \textit{cohomology ring} a richer structure than homology (vector spaces). 

\textit{Persistent cohomology} has been studied in \citep{de2011dualities,de2011persistent,dlotko2014simplification,bauer2021ripser,kang2021evaluating},
without exploiting the ring structure induced by the cup product. Works which do attempt to exploit this ring structure include \citep{gonzalez2003hha,kaczynski2010computing} in the standard case and 
\citep{huang2005cup,yarmola2010persistence,aubrey2011persistent,herscovich2018higher,belchi2021a,polanco2021,lupo2022persistence,contessoto_et_al:LIPIcs.SoCG.2022.31} at the persistent level.

In \citep{huang2005cup}, the author applies the persistence algorithm toward calculating a set of invariants related to the cup products in the cohomology ring of a
space. In \citep{yarmola2010persistence}, the author studies an algebraic substructure of the cohomology ring. 
In \citep{polanco2021}, the authors study a persistence based approach for differentiating quasi-periodic and periodic signals which is inherently based on cup products.

In \citep{aubrey2011persistent}, the author develops a setting for persistent characteristic classes and constructs algorithms for (i) finding the Poincare Dual to a homology class, (ii) decomposing cohomology classes and (iii) deciding when a cohomology class is a Steenrod square. In \citep{lupo2022persistence} the authors establish the notion of persistent Steenrod modules by incorporating the Steenrod square operation into the persistence computational pipeline, and they implement an algorithm to compute the barcode of persistent Steenrod modules \citep{steenroder}.

Assuming an embedding of a simplicial set into $\bbR^n$, the author of \citep{herscovich2018higher} studies a notion of barcodes (together with a suitable extension of the bottleneck distance) which absorb information from a certain $A_\infty$-algebra structure on persistent cohomology. In \citep{belchi2021a}, the authors study the structure and stability of a family of barcodes that absorb information from an $A_\infty$-coalgebra structure on persistent homology. See also \citep{belchi2015a}. 

In \citep{ginot2019multiplicative}, the authors study several interleaving-type distances on persistent cohomology by considering different algebraic structures (including the natural ring structure) and study the stability of the persistent cohomology for filtrations. 

In our previous joint work with Contessoto \citep{contessoto_et_al:LIPIcs.SoCG.2022.31}, we tackled the question of quantifying the \emph{evolution} of the cup product structure across a filtration through introducing a \emph{polynomial-time computable} invariant which is induced from the notion of \emph{cup-length}: the maximal number of cocycles (in dimensions 1 and above) having non-zero cup product. We call this invariant the \textit{persistent cup-length invariant}, and we identify a tool - the \textit{persistent cup-length diagram} (associated to a family of representative cocycles $\bsigma$ of the barcode) as well as a polynomial-time algorithm to compute it. In Sec.~\ref{sec:Persistent-cup-length}, we recall and provide more details for the mathematical results in \citep{contessoto_et_al:LIPIcs.SoCG.2022.31}. 
Readers interested in the algorithmic part should still refer to the original paper \citep{contessoto_et_al:LIPIcs.SoCG.2022.31}. 

The goal of this paper is to develop more general notions of persistent invariants that can extract additional information from the cup product operation than just the persistent cup-length invariant, including the persistent LS-category (see Sec.~\ref{sec:ls-cat}) and the persistent cup modules (see Sec.~\ref{sec:cup module}). 

\medskip\noindent\textbf{Some invariants related to the cup product.} 
An \textit{invariant} in standard topology is a quantity assigned to a given topological space that remains invariant under a certain class of maps. This invariance helps in discovering, studying, and classifying properties of spaces when the class of maps is that of homotopy equivalences. Beyond \emph{Betti numbers}, examples of classical invariants are: the \emph{Lyusternik-Schnirelmann category (LS-category)} of a space $ X$, defined as the minimal integer $k\geq 1$ such that there is an open cover $\{U_i\}_{i=1}^k$ of $ X$ such that each inclusion map $U_i\hookrightarrow  X$ is null-homotopic, and the \emph{cup-length invariant},  which is the maximum number of positive-degree cocycles having non-zero cup product.
While being relatively more informative, the LS-category is difficult to compute \citep{cornea2003lusternik}, and the rational LS-category\footnote{The rational LS-category of a space $X$ is the smallest LS-category of spaces that are rational homotopy equivalent to $X$\citep{felix1982rational}. And two spaces are rational homotopy equivalent if there is a map between them that induces isomorphism between homotopy groups of the two spaces.} is known to be NP-hard to compute \citep{lechuga2000complexity}. 
The cup-length invariant, as a lower bound of the LS-category \citep{rudyak1999analytical,Rudyak1999ONCW}, serves as a computable estimate for the LS-category.
Another well known invariant which can be estimated through the cup-length is the so-called \textit{topological complexity} \citep{SMALE198781,farber2003topological,sarin2017cup}.

\subsection*{Our contributions.} 

Let $\Top$ denote the category of (compactly generated weak Hausdorff) topological spaces.\footnote{We are following the convention from \citep{blumberg2017universality}.} Throughout the paper, by a (topological) space we refer to an object in $\Top$, and by a \emph{persistent space} we mean a functor from the poset category $(\bbR,\leq )$ to $\Top$. A filtration (of spaces) is an example of a persistent space where the transition maps are given by inclusions. This paper considers only persistent spaces with a discrete set of critical values. In addition, all (co)homology groups are assumed to be taken over a field $\field$.
We denote by $\Int_\omega$ the set of intervals of type $\omega$, where $\omega$ can be 
any one of the four types: open-open, open-closed, closed-open and closed-closed. 
Results in this paper apply to all four situations, so for simplicity of notation, we state our results only for closed-closed intervals and omit $\omega$ unless otherwise stipulated.

Let $(\catN,\leq )$ be a poset category (e.g., $\catN=\N,\N^\infty$ or $\N^{\infty,\infty}$) with a partial order $\leq$. Let $(\catN,\leq )^{\mathrm{op}}$ be the opposite category\footnote{The opposite category $\catC^{\mathrm{op}}$ of a category $\catC$ is the category whose objects are the same as $\catC$, but whose arrows are the arrows of $\catC$ with the reverse direction.} of $(\catN,\leq )$, i.e. a poset category on $\catN$ equipped with the converse (or dual) relation $\geq$. 
In Sec.~\ref{sec:pers-inv}, for any given category $\catC$, we define the \emph{$\catN$-valued categorical invariants} to be maps $\invariant :\ob(\catC)\sqcup\mor(\catC)\to\catN$ assigning values to both objects and morphisms in $\catC$, such that $\invariant (\id _X)=\invariant (X)$ for all $X\in \ob(\catC)$ and $\invariant (g\circ f)\leq \min\{\invariant (f),\invariant (g)\}$ for any $f:X\to Y$, $g:Y\to Z$ in $\catC$; see Defn.~\ref{def:categorical invariant} and Prop.~\ref{prop:eqvi def for cat inv}. Compared with classical invariants, which are usually only defined on the objects of the underlying category, categorical invariants are also defined on the morphisms. Notice that categorical invariants are always invariant under isomorphisms (see Rmk.~\ref{rmk:invariant preserves iso}).

Here we are abusing the name `invariant', and the standard notion of invariant is more closely related to what we call epi-mono invariant, i.e. invariants that are non-increasing under regular epimorphisms and non-decreasing under monomorphism.

The categorical invariants from Defn.~\ref{def:categorical invariant} can be seen as a generalization of the notion of epi-mono invariant mentioned in Ex.~\ref{ex:surjective-injective-inv} and of other invariants that appeared in TDA literature (see Sec.~\ref{sec:comparison-long} for a detailed comparison):
\begin{itemize}
    \item For $\catC$ an abelian category, the notion of \emph{epi-mono-respecting pre-orders on $\catC$} of Puuska \citep[Defn.~3.2]{puuska2017erosion} is equivalent to the restriction of our notion of epi-mono invariant to abelian categories.
     \item For $\catC$ any category, the notion of \emph{categorical persistence function} of Bergomi et al.~\citep[Defn.~3.2]{bergomi2019rank} is a categorical invariant satisfying an additional inequality.
   \item   
    For $\catC$ a regular category, an epi-mono invariant 
    is a special case of a categorical invariant (see \ref{sec:comparison-long} for the details). Examples of epi-mono invariants, include \emph{rank functions} of Bergomi et al.~\citep[Defn.~2.1]{bergomi2019rank}  and \emph{amplitudes} of Giunti et al.~\citep[Defn.~3.1]{giunti2021amplitudes}.
\end{itemize}  

The persistent LS-category invariant, which we introduce in this work, cannot be realized as an invariant of the above types, making our notion of categorical invariant a non-trivial generalization.

Given any \emph{persistent object}, i.e. a functor $\Ffunc:(\bbR,\leq)\to\catC$, a categorical invariant $\invariant $ gives rise to a \emph{persistent (categorical) invariant}\footnote{In this paper, by `persistent invariant' we always mean one such invariant lifted from a categorical invariant.} defined as the functor $\invariant (\Ffunc):(\Int,\subseteq)\to (\catN,\leq)^{\mathrm{op}}$ 
sending each interval $[a,b]$ to the $\invariant $-invariant of the transition map $f_a^b$, cf. Defn.~\ref{def:p_invariant}. 
For example, the well-known \emph{rank invariant} 
\citep[Defn.~11]{carlsson2007theory}
of a persistent module is a persistent invariant induced by the dimension map $\dimension:\ob(\Vect)\sqcup\mor(\Vect)\to\N$ defined by sending each vector space to its dimension and each linear map to the dimension of its image. Here $\Vect$ denotes the category of finite-dimensional vector spaces over a given field $K$.

In Sec.~\ref{sec:Persistent-cup-length}, we realize the cup-length invariant as a categorical invariant by defining the cup-length of a map to be the cup-length of its image. We then lift the cup-length invariant to a persistent invariant: for a persistent space $\Xfunc:(\bbR,\leq)\to \Top$ with $t\mapsto  X_t$, the \textbf{persistent cup-length invariant} $\cupprod(\Xfunc):\Int\to\N$ of $\Xfunc$, see Defn.~\ref{def:cup_l_func},
is defined as the functor 
from $(\Int,\subseteq)$ to $(\N,\geq )$ of non-negative integers, which assigns to each interval $[a,b]$ the cup-length of the image ring $\image \big(\bfH^*( X_b) \to\bfH^*( X_a) \big)$.
See also \citep[Sec.~2]{contessoto_et_al:LIPIcs.SoCG.2022.31} for details. 

In Sec.~\ref{sec:ls-cat}, we recall the notion of the LS-category of a map first introduced in \citep{fox1941lusternik} and more carefully studied in \citep[Defn.~1.1]{berstein1962category}, 
and see that the LS-category is a categorical invariant of topological spaces. We define the \textbf{persistent LS-category invariant} of a persistent space $\Xfunc$ to be the function
$\cat(\Xfunc):\Int\to\N$ of $\Xfunc$ assigning to each interval $[a,b]$ the LS-category of the transition map $ X_a\to  X_b
$; see Defn.~\ref{def:pers-ls}.
In Prop.~\ref{prop:cup<cat}, we prove that in analogy with the standard fact that cup-length is a lower bound for the LS-category their persistent versions also satisfy that inequality: for any interval $[a,b]$,
$$\cupprod(\Xfunc)([a,b])\leq \cat(\Xfunc)([a,b]).$$ 

See Fig.~\ref{fig:torus_sphere_VR_flag} for examples of the persistent cup-length invariant and the persistent LS-category invariant. Although the latter invariant is pointwisely bounded below by the former, the latter is not necessarily stronger in terms of distinguishing topological filtrations; see Ex.~\ref{ex:cat v.s. cup p-version}.

\begin{figure}[H]
\centering
     \begin{tikzpicture}[scale=0.6]
    \begin{axis} [ 
    ticklabel style = {font=\large},
    axis y line=middle, 
    axis x line=middle,
    ytick={0.5,0.67,0.95},
    yticklabels={$\tfrac{\pi}{2}$,$\tfrac{2\pi}{3}$,$\pi$},
    xtick={0.5,0.58,0.95},
    xticklabels={$\tfrac{\pi}{2}$,$\zeta_3$, $\pi$},
    xmin=0, xmax=1.1,
    ymin=0, ymax=1.1,]
    \addplot [mark=none] coordinates {(0,0) (1,1)};
    \addplot [thick,color=dgmcolor!40!white,fill=dgmcolor!40!white, 
                    fill opacity=0.45]coordinates {
            (0,0.58)
            (0,0)
            (0.58,0.58)
            (0,0.58)};
    \addplot [thick,color=black!10!white,fill=black!10!white, 
                    fill opacity=0.4]coordinates {
            (0.58,0.95)
            (0.58,0.58)
            (0.95,0.95)
            (0.58,0.95)};
    \node[mark=none] at (axis cs:.25,.45){\textsf{2}};
    \end{axis}
    \end{tikzpicture}
    \hspace{1.5cm}
    \begin{tikzpicture}[scale=0.6]
    \begin{axis} [ 
    ticklabel style = {font=\large},
    axis y line=middle, 
    axis x line=middle,
    ytick={0.5,0.67,0.95},
    yticklabels={$\tfrac{\pi}{2}$,$\tfrac{2\pi}{3}$,$\pi$},
    xtick={0.5,0.6,0.95},
    xticklabels={$\tfrac{\pi}{2}$,$\zeta_2$, $\pi$},
    xmin=0, xmax=1.1,
    ymin=0, ymax=1.1,]
    \addplot [mark=none] coordinates {(0,0) (1,1)};
    \addplot [thick,color=dgmcolor!40!white,fill=dgmcolor!40!white, 
                    fill opacity=0.45]coordinates {
            (0,0.6)
            (0,0)
            (0.6,0.6)
            (0,0.6)};
    \addplot [thick,color=black!10!white,fill=black!10!white, 
                    fill opacity=0.4]coordinates {
            (0.6,0.95)
            (0.6,0.6)
            (0.95,0.95)
            (0.6,0.95)};
    \node[mark=none] at (axis cs:.25,.45){\textsf{2}};
    \end{axis}
    \end{tikzpicture}
\caption{The persistent invariants $\invariant(\VR\left(\bbT^2\vee\bbS^3\right))$ (left) and $\invariant(\VR\left((\bbS^1\times\bbS^2)\vee\bbS^1\right))$ (right), respectively, for $\invariant=\cupprod$ or $\cat$. Here, $\zeta_2=\arccos(-\tfrac{1}3)\approx 0.61\pi$, $\zeta_3=\arccos(-\tfrac{1}{4})\approx 0.58\pi$ and gray regions means undetermined values. See Ex.~\ref{sec:flag>cup}.} 
\label{fig:torus_sphere_VR_flag}
\end{figure}

In Sec.~\ref{sec:all-stability}, we establish stability results for persistent invariants. 
We first prove that the erosion distance $\de$ between persistent invariants is bounded above by the interleaving distance $\di$ between the underlying persistent objects (see Sec.~\ref{sec:cat-stab-per-inv}):
\begin{restatable}[$d_{\mathrm I}$-stability of persistent invariants]{theorem}{distab}
\label{thm:stab-per-inv}
Let $\catC$ be a category, and let $\invariant :\ob(\catC)\sqcup\mor(\catC)\to\catN$ be a categorical invariant of $\catC$. 
The persistence $\invariant $-invariant is $1$-Lipschitz stable: for any $\Ffunc,\Gfunc:(\bbR,\leq)\to\catC$,
$$d_{\mathrm{E}}(\invariant (\Ffunc),\invariant (\Gfunc))\leq d_{\mathrm I}(\Ffunc,\Gfunc).$$
\end{restatable}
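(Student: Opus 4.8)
The plan is to unwind both metrics into pointwise comparisons on intervals and to bridge them with a single factorization identity. First I would recall that $\di(\Ffunc,\Gfunc)\leq\epsilon$ means precisely that $\Ffunc$ and $\Gfunc$ admit an $\epsilon$-interleaving: natural families of morphisms $\phi_t\colon F_t\to G_{t+\epsilon}$ and $\psi_t\colon G_t\to F_{t+\epsilon}$ that are natural with respect to the transition maps and satisfy the triangle identities $\psi_{t+\epsilon}\circ\phi_t=f_t^{t+2\epsilon}$ and $\phi_{t+\epsilon}\circ\psi_t=g_t^{t+2\epsilon}$. On the other side, $\de(\invariant(\Ffunc),\invariant(\Gfunc))\leq\epsilon$ unwinds to the pair of pointwise inequalities
$$\invariant(\Ffunc)([a-\epsilon,b+\epsilon])\leq\invariant(\Gfunc)([a,b])\quad\text{and}\quad\invariant(\Gfunc)([a-\epsilon,b+\epsilon])\leq\invariant(\Ffunc)([a,b])$$
for every interval $[a,b]$, the order being that of $\catN$. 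Since the interleaving relation is an up-set in $\epsilon$ and $\di$ is the infimum over the $\epsilon$ admitting an interleaving, it suffices to show that a fixed $\epsilon$-interleaving forces both displayed inequalities; letting $\epsilon$ decrease to $\di(\Ffunc,\Gfunc)$ then yields the claim, with the case $\di=\infty$ being vacuous.

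The crux is the factorization identity. Using naturality of $\psi$ (applied to $a\leq b$) and then the triangle identity (applied at index $a-\epsilon$), I would compute
$$\psi_b\circ g_a^b\circ\phi_{a-\epsilon}=f_{a+\epsilon}^{b+\epsilon}\circ(\psi_a\circ\phi_{a-\epsilon})=f_{a+\epsilon}^{b+\epsilon}\circ f_{a-\epsilon}^{a+\epsilon}=f_{a-\epsilon}^{b+\epsilon}.$$
Thus the transition map $f_{a-\epsilon}^{b+\epsilon}$ factors through the single morphism $g_a^b$, namely as $\psi_b\circ g_a^b\circ\phi_{a-\epsilon}$. Applying the defining inequality $\invariant(g\circ f)\leq\min\{\invariant(f),\invariant(g)\}$ twice, first to peel off $\psi_b$ and then to peel off $\phi_{a-\epsilon}$, gives $\invariant(f_{a-\epsilon}^{b+\epsilon})\leq\invariant(g_a^b)$. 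Since $\invariant(\Ffunc)([a-\epsilon,b+\epsilon])=\invariant(f_{a-\epsilon}^{b+\epsilon})$ and $\invariant(\Gfunc)([a,b])=\invariant(g_a^b)$ by definition of the persistent invariant, this is exactly the first displayed inequality, and the second follows by the symmetric computation with the roles of $\Ffunc,\Gfunc$ and of $\phi,\psi$ interchanged. Note that this argument uses nothing about $\catC$ or $\catN$ beyond the two axioms of a categorical invariant, so the generality of the statement is automatic.

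I expect the main obstacle to be variance bookkeeping rather than anything conceptual. Because $\invariant(\Ffunc)$ takes values in $(\catN,\leq)^{\mathrm{op}}$ it is order-reversing under interval inclusion, so dilating $[a,b]$ to $[a-\epsilon,b+\epsilon]$ can only decrease the value; I must check that this dilation direction is the one compatible with the erosion comparison and that the index shifts in the naturality square and the triangle identity line up correctly with the codomain $F_{b+\epsilon}$ and domain $F_{a-\epsilon}$. The remaining care is the passage from a fixed interleaving to the infimum: since $\de(\invariant(\Ffunc),\invariant(\Gfunc))\leq\epsilon$ holds for every $\epsilon$ admitting an $\epsilon$-interleaving and the interleaving relation is monotone in $\epsilon$, one concludes $\de(\invariant(\Ffunc),\invariant(\Gfunc))\leq\inf\{\epsilon:\Ffunc,\Gfunc\text{ are }\epsilon\text{-interleaved}\}=\di(\Ffunc,\Gfunc)$.
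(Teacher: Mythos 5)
Your proposal is correct and follows essentially the same route as the paper: both establish that the transition map of one persistent object over the dilated interval $[a-\epsilon,b+\epsilon]$ factors through the other object's transition map over $[a,b]$ via the interleaving morphisms (your identity $f_{a-\epsilon}^{b+\epsilon}=\psi_b\circ g_a^b\circ\varphi_{a-\epsilon}$ is the mirror image of the paper's $g_{a-\epsilon}^{b+\epsilon}=\varphi_b\circ f_a^b\circ\psi_{a-\epsilon}$), and then apply condition (ii) of the definition of a categorical invariant to conclude the erosion inequality. The bookkeeping of index shifts and the passage to the infimum are both handled correctly.
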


In Sec.~\ref{sec:homo-stab-per-inv}, for the case of topological spaces, we consider categorical invariants that preserve weak homotopy equivalences, and we strengthen the above stability result by replacing $\di$ with 
the homotopy-interleaving distance $\dhi$ introduced by Blumberg and Lesnick \citep[Defn.~3.6]{blumberg2017universality}. Following the fact that $\dhi$ is stable under the Gromov-Hausdorff distance $\dgh$ between metric spaces (see Prop. \ref{prop:dh-dgh}), we also obtain stability of such categorical invariants in the Gromov-Hausdorff sense:

\begin{restatable}[Homotopical stability]{theorem}{homostab} 
\label{thm:main-stability} 
Let $\invariant $ be a categorical invariant of topological spaces satisfying the condition that for any maps 
$ X\xrightarrow{f}Y\xrightarrow{g}Z\xrightarrow{h}W$ where $g$ is a weak homotopy equivalence, $\invariant (g\circ f)=\invariant (f)$ and $\invariant (h\circ g)=\invariant (h)$. 
Then, for two persistent spaces $\Xfunc,\Yfunc:(\bbR,\leq)\to\Top$, we have
\begin{equation}\label{eq:dE-dHI}
    d_{\mathrm{E}}(\invariant (\Xfunc),\invariant (\Yfunc))\leq d_{\mathrm{HI}}(\Xfunc,\Yfunc).
\end{equation}
For the Vietoris-Rips filtrations $\VR_\bullet(X)$ and $\VR_\bullet(Y)$ of compact metric spaces $X$ and $Y$, we have
\begin{equation}\label{eq:dE-dGH}
d_{\mathrm{E}}\left(\invariant \left(\VR_\bullet (X)\right),\invariant \left(\VR_\bullet (Y)\right)\right)\leq 2\cdot d_{\mathrm{GH}}(X,Y). 
\end{equation} 
\end{restatable}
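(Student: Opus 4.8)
The plan is to derive both inequalities from the interleaving-distance stability of Theorem~\ref{thm:stab-per-inv}, the only additional ingredient being that, under the stated hypothesis, the persistent invariant $\invariant(\Xfunc)$ depends on $\Xfunc$ only through its weak equivalence class. Recall that by the Blumberg--Lesnick definition \citep[Defn.~3.6]{blumberg2017universality}, $\dhi(\Xfunc,\Yfunc)$ is the infimum of $\di(\Xfunc',\Yfunc')$ over all persistent spaces $\Xfunc',\Yfunc'$ that are weakly equivalent (objectwise, up to a zig-zag of natural weak equivalences) to $\Xfunc,\Yfunc$ respectively. So if I can show $\invariant(\Xfunc)=\invariant(\Xfunc')$ and $\invariant(\Yfunc)=\invariant(\Yfunc')$ whenever $\Xfunc'\simeq\Xfunc$ and $\Yfunc'\simeq\Yfunc$, then applying Theorem~\ref{thm:stab-per-inv} to each such pair and taking the infimum yields \eqref{eq:dE-dHI}.

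The key step is therefore the following invariance lemma. Let $\phi\colon\Xfunc\to\Xfunc'$ be a natural transformation each of whose components $\phi_t\colon X_t\to X'_t$ is a weak homotopy equivalence; I claim $\invariant(\Xfunc)=\invariant(\Xfunc')$ as functors on $(\Int,\subseteq)$. Fix an interval $[a,b]$ and write $x_a^b\colon X_a\to X_b$ and $(x')_a^b\colon X'_a\to X'_b$ for the transition maps, so that $\invariant(\Xfunc)([a,b])=\invariant(x_a^b)$ and likewise for $\Xfunc'$. Naturality gives the commuting square $\phi_b\circ x_a^b=(x')_a^b\circ\phi_a$. Applying the hypothesis with the weak equivalence $\phi_b$ placed after $x_a^b$ gives $\invariant(\phi_b\circ x_a^b)=\invariant(x_a^b)$, while applying it with the weak equivalence $\phi_a$ placed before $(x')_a^b$ gives $\invariant((x')_a^b\circ\phi_a)=\invariant((x')_a^b)$. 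Since the two composites coincide, $\invariant(x_a^b)=\invariant((x')_a^b)$, i.e. $\invariant(\Xfunc)([a,b])=\invariant(\Xfunc')([a,b])$. As this holds for every interval, $\invariant(\Xfunc)=\invariant(\Xfunc')$. The identical argument applies to a natural weak equivalence pointing the other way, and hence, by composing along the zig-zag witnessing $\Xfunc'\simeq\Xfunc$, the invariant is constant on weak equivalence classes.

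With the lemma in hand, \eqref{eq:dE-dHI} follows: for any $\Xfunc'\simeq\Xfunc$ and $\Yfunc'\simeq\Yfunc$,
$$\de(\invariant(\Xfunc),\invariant(\Yfunc))=\de(\invariant(\Xfunc'),\invariant(\Yfunc'))\leq\di(\Xfunc',\Yfunc'),$$
where the equality is the lemma and the inequality is Theorem~\ref{thm:stab-per-inv}; taking the infimum over all such $\Xfunc',\Yfunc'$ gives $\de(\invariant(\Xfunc),\invariant(\Yfunc))\leq\dhi(\Xfunc,\Yfunc)$. Finally, \eqref{eq:dE-dGH} is immediate by combining \eqref{eq:dE-dHI}, applied to $\Xfunc=\VR_\bullet(X)$ and $\Yfunc=\VR_\bullet(Y)$, with the bound $\dhi(\VR_\bullet(X),\VR_\bullet(Y))\leq 2\,\dgh(X,Y)$ of Prop.~\ref{prop:dh-dgh}.

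I expect the main obstacle to be the invariance lemma, and specifically the bookkeeping of the two directions in the composition hypothesis (a weak equivalence absorbed after $x_a^b$ versus before $(x')_a^b$), together with the reduction of a general zig-zag weak equivalence to iterated single-step comparisons. The remaining passage from $\di$ to $\dhi$ to $\dgh$ is a formal infimum-and-transitivity argument resting on Theorem~\ref{thm:stab-per-inv} and Prop.~\ref{prop:dh-dgh}.
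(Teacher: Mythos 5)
Your proposal is correct and follows essentially the same route as the paper: the paper isolates your invariance claim as Lem.~\ref{lem:w.h.e.-per-inv} (proved via exactly the same naturality-square argument, applied to the single span $\Xfunc\Leftarrow\Zfunc\Rightarrow\Xfunc'$ that defines weak equivalence here), and then combines it with Thm.~\ref{thm:stab-per-inv} and Prop.~\ref{prop:dh-dgh} by the same infimum argument.
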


We apply the above theorem to show that the persistent cup-length invariant and the persistent LS-category are stable:

\begin{restatable}[Homotopical stability of $\cupprod(\cdot)$]{corollary}{homostabcup} 
\label{cor:stab-cup}
For persistent spaces $\Xfunc,\Yfunc:(\bbR,\leq)\to\Top$, the persistent cup-length invariant $\cupprod(\cdot)$ satisfies Eqn. (\ref{eq:dE-dHI}) and Eqn. (\ref{eq:dE-dGH}).
\end{restatable}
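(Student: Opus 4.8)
The plan is to deduce this corollary directly from Theorem~\ref{thm:main-stability} by verifying that the cup-length invariant $\cupprod$ is a categorical invariant of $\Top$ satisfying the weak-homotopy-equivalence hypothesis of that theorem. Recall from Sec.~\ref{sec:Persistent-cup-length} that $\cupprod$ is defined on a map $f\colon X\to Y$ as the cup-length of the image ring $\image\big(f^*\colon \bfH^*(Y)\to\bfH^*(X)\big)$, and that this is already known to be a categorical invariant. Thus the only thing left to check is the hypothesis of Theorem~\ref{thm:main-stability}: for any composable maps $X\xrightarrow{f}Y\xrightarrow{g}Z\xrightarrow{h}W$ with $g$ a weak homotopy equivalence, we must show $\cupprod(g\circ f)=\cupprod(f)$ and $\cupprod(h\circ g)=\cupprod(h)$.

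The key input is that, with field coefficients, a weak homotopy equivalence $g$ induces an isomorphism of graded cohomology rings $g^*\colon \bfH^*(Z)\xrightarrow{\cong}\bfH^*(Y)$; this follows from naturality of the cup product together with the fact that weak equivalences induce isomorphisms on (co)homology. I would also record the elementary observation that the cup-length of a graded-commutative ring depends only on its isomorphism type, since it is defined purely in terms of products of positive-degree elements.

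For the first equality, contravariance gives $(g\circ f)^*=f^*\circ g^*$, so
$$\image\big((g\circ f)^*\big)=f^*\big(\image(g^*)\big)=f^*\big(\bfH^*(Y)\big)=\image(f^*),$$
where the middle step uses that $g^*$ is surjective. Since the two image rings coincide as subrings of $\bfH^*(X)$, their cup-lengths agree, i.e.\ $\cupprod(g\circ f)=\cupprod(f)$. For the second equality, $(h\circ g)^*=g^*\circ h^*$ yields $\image\big((h\circ g)^*\big)=g^*\big(\image(h^*)\big)$; because $g^*$ is a ring isomorphism it restricts to an isomorphism $\image(h^*)\xrightarrow{\cong} g^*\big(\image(h^*)\big)$, and hence the two rings have the same cup-length by the isomorphism-invariance noted above, giving $\cupprod(h\circ g)=\cupprod(h)$.

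Having verified the hypothesis, Theorem~\ref{thm:main-stability} applies with $\invariant=\cupprod$ and yields both Eqn.~(\ref{eq:dE-dHI}) and Eqn.~(\ref{eq:dE-dGH}). I do not anticipate a genuine obstacle; the only point requiring care is the distinction between the two cases. In the first, the image subrings are equal on the nose (using surjectivity of $g^*$), whereas in the second they are merely isomorphic (using that $g^*$ is also injective), so one must separately invoke that cup-length is an isomorphism invariant of graded rings rather than relying on literal equality of subrings.
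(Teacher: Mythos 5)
Your proof is correct and follows essentially the same route as the paper: both reduce the corollary to Thm.~\ref{thm:main-stability} by showing that a weak homotopy equivalence induces a graded algebra isomorphism on cohomology (with field coefficients), whence the two required equalities $\cupprod(g\circ f)=\cupprod(f)$ and $\cupprod(h\circ g)=\cupprod(h)$. If anything your write-up is slightly more careful than the paper's, since you separate the case where the image subrings coincide on the nose (using surjectivity of $g^*$) from the case where they are only isomorphic (using that $g^*$ is a ring isomorphism), a distinction the paper compresses into one displayed chain followed by ``similarly''.
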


\begin{restatable}[Homotopical stability of $\cat(\cdot)$]{corollary}{homostabcat} 
\label{cor:stab-cat}
For persistent CW complexes $\Xfunc,\Yfunc:(\bbR,\leq)\to\Top$, the persistent LS-category $\cat(\cdot)$ satisfies Eqn. (\ref{eq:dE-dHI}) and Eqn. (\ref{eq:dE-dGH}).
\end{restatable}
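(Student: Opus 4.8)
The plan is to deduce Corollary~\ref{cor:stab-cat} from Theorem~\ref{thm:main-stability} by verifying that the LS-category invariant $\cat$ satisfies the weak-homotopy-invariance hypothesis required there. Concretely, I would show that for any sequence of maps $X\xrightarrow{f}Y\xrightarrow{g}Z\xrightarrow{h}W$ in which $g$ is a weak homotopy equivalence, we have $\cat(g\circ f)=\cat(f)$ and $\cat(h\circ g)=\cat(h)$. Once this hypothesis is established, Eqn.~(\ref{eq:dE-dHI}) and Eqn.~(\ref{eq:dE-dGH}) follow immediately by applying Theorem~\ref{thm:main-stability} with $\invariant=\cat$ to the persistent spaces $\Xfunc,\Yfunc$.

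The first step is to recall the definition of the LS-category of a map from Sec.~\ref{sec:ls-cat} (following \citep{berstein1962category}): $\cat(f)$ is the least integer $k$ such that the domain of $f$ admits an open cover by $k$ sets on each of which $f$ is null-homotopic. The crucial structural fact I would invoke is that the LS-category of a map is a homotopy invariant in a strong sense --- it depends only on the homotopy class of $f$, and moreover it is unchanged under pre- or post-composition with a (weak) homotopy equivalence. For CW complexes this is standard, since a weak homotopy equivalence between CW complexes is automatically a genuine homotopy equivalence by Whitehead's theorem. This is precisely why the corollary restricts to \emph{persistent CW complexes}: the hypothesis of Theorem~\ref{thm:main-stability} is phrased in terms of weak homotopy equivalences, but the invariance properties of $\cat$ are most naturally available for honest homotopy equivalences.

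The key steps, in order, are: (i) reduce to checking the two equalities $\cat(g\circ f)=\cat(f)$ and $\cat(h\circ g)=\cat(h)$ for $g$ a weak homotopy equivalence of CW complexes; (ii) upgrade $g$ to a genuine homotopy equivalence via Whitehead's theorem, so that $g$ admits a homotopy inverse $g'$; (iii) for the post-composition equality $\cat(h\circ g)=\cat(h)$, observe that a null-homotopy of $h$ on an open set $U\subseteq Z$ pulls back along the homotopy equivalence to a null-homotopy of $h\circ g$ on $g^{-1}(U)$, and conversely use $g'$ to transport covers in the other direction, yielding equality of the minimal cover sizes; (iv) for the pre-composition equality $\cat(g\circ f)=\cat(f)$, note that since $g$ is a homotopy equivalence, $g\circ f$ is null-homotopic on an open set exactly when $f$ is, because $g\circ f\simeq \ast$ iff $f\simeq g'\circ g\circ f\simeq \ast$ on that set; hence the two maps admit the same null-homotopic open covers and have equal category.

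The main obstacle I anticipate is step~(iii)/(iv): while the conclusion is intuitively clear, one must argue carefully that the open covers witnessing nullhomotopy can be transported across the (weak) homotopy equivalence \emph{without increasing the number of cover elements}, so that the minima defining $\cat$ coincide rather than merely bound one another in one direction. This requires being precise that categorical covers pull back to categorical covers of the same cardinality along a homotopy equivalence, and using the homotopy inverse to obtain the reverse inequality; the CW hypothesis (via Whitehead) is exactly what licenses the existence of that homotopy inverse. Apart from this, the argument is a routine unwinding of the definition of $\cat$ of a map together with the already-established Theorem~\ref{thm:main-stability}.
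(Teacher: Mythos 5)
Your proposal is correct and follows essentially the same route as the paper: invoke Whitehead's theorem to upgrade the weak homotopy equivalence $g$ to a genuine homotopy equivalence, verify the hypothesis $\cat(g\circ f)=\cat(f)$ and $\cat(h\circ g)=\cat(h)$ of Thm.~\ref{thm:main-stability}, and conclude. The only difference is that the paper dispatches the invariance step by citing Prop.~\ref{prop:property-cat(f)} (homotopy invariance and the composition inequality for $\cat$ of maps), whereas you re-derive it by transporting categorical open covers across the homotopy equivalence --- a sound but more laborious unwinding of the same facts.
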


Notice that the persistent cup-length invariant and persistent LS-category invariant are comparable in the sense that neither invariant is stronger than the other (see Ex.~\ref{ex:cat v.s. cup p-version}), similar to the static case (see Ex.~\ref{ex:cat v.s. cup}).

Through several examples, we show that the persistent cup-length (or LS-category) invariant helps in discriminating filtrations when  persistent homology fails to or has a relatively weak performance in doing so, e.g. \citep[Ex.~13]{contessoto_et_al:LIPIcs.SoCG.2022.31}. Also, in Ex.~\ref{ex:T2-wedge}, we specify suitable metrics on the torus $\bbT^2$ and on the wedge sum $\bbS^1 \vee \bbS^2 \vee \bbS^1$, and compute the erosion distance between their persistent cup-length (or LS-category) invariants and apply Thm.~\ref{thm:main-stability} to obtain a lower bound $\tfrac{\pi}{6}$ for the Gromov-Hausdorff distance between them $\bbT^2$ and $\bbS^1 \vee \bbS^2 \vee \bbS^1$ (see Prop.~\ref{prop:erosion-comp})\footnote{For the case of persistent cup-length invariant, this result was stated without proof in pg.~4 of the conference paper \citep{contessoto_et_al:LIPIcs.SoCG.2022.31}.}. 
We also verify that the interleaving distance between the persistent homology of these two spaces is at most $\tfrac{3}{5}$ of the bound obtained from persistent cup-length (or LS-category) invariants. See Rmk.~\ref{rmk:inter-torus-wedge}.

In Sec.~\ref{sec:cup module}, for a given persistent space $\Xfunc$ and any $\ell\in \N^+$, 
we study the $\ell$-fold product $(\bfH^{ + }(\Xfunc))^{\ell}$ of the persistent (positive-degree) cohomology ring, via the notion of flags of vector spaces. A \emph{flag}\footnote{In the literature, the term `flag' refers to a strictly increasing or decreasing filtration on a vector space. For the purpose of simplicity, we abuse this terminology and define a flag to be a non-increasing filtration.} (of vector spaces) means a non-increasing sequence of vector spaces connected by inclusions, e.g. $V_1\supseteq  V_2\supseteq \cdots$. A flag is said to have \emph{finite depth} if there is some $n$ such that $V_n=0$ (as a consequence, $V_k=0$ for all $k\geq n$). 
Similarly, we call a non-increasing sequence of \emph{graded} vector spaces connected by degree-wise inclusions to be a \emph{graded flag}:
$$\bigoplus_{p\geq 1} V^{p}_1 \supseteq  \bigoplus_{p\geq 1} V^{p}_2 \supseteq  \cdots \supseteq  \bigoplus_{p\geq 1} V^{p}_\ell\supseteq  \cdots.$$

For a topological space $ X$, we define $\cupmodule( X)$ to be the graded flag induced by $\ell$-fold product $(\bfH^{ + }( X))^\ell$ for all $\ell\in \N^+$: 
\[\cupmodule( X):=\, \bfH^{ + }( X)\supseteq  (\bfH^{ + }( X))^{ 2}\supseteq  (\bfH^{ + }( X))^{3}\supseteq \cdots.\]
Let $\Flag$ and $\grFlag$ be the category of finite-depth flags and finite-depth graded flags, respectively. For a persistent space $\Xfunc$, we have the \emph{persistent graded flag} $\cupmodule(\Xfunc):(\bbR,\leq)\to \grFlag$ with $t\mapsto \cupmodule( X_t)$, and we call it the \textbf{persistent cup module} of $\Xfunc$. Indeed, the persistent cup module can be described via the following commutative diagram: for any $t\leq t'$,
\[
\begin{tikzcd}[column sep=small]
\cupmodule( X_t):=
&
\bfH^{ + }( X_t)
\ar[r, phantom, sloped, "\supseteq "]
& (\bfH^{ + }( X_t))^{ 2} 
\ar[r, phantom, sloped, "\supseteq "]
& (\bfH^{ + }( X_t))^{ 3}
\ar[r, phantom, sloped, "\supseteq "]
& \cdots
\\
\cupmodule( X_{t'}):=
\ar[u]
&
\bfH^{ + }( X_{t'})
\ar[u]
\ar[r, phantom, sloped, "\supseteq "]
& (\bfH^{ + }( X_{t'}))^{ 2} 
\ar[u]
\ar[r, phantom, sloped, "\supseteq "]
& (\bfH^{ + }( X_{t'}))^{ 3}
\ar[u]
\ar[r, phantom, sloped, "\supseteq "]
& \cdots
\ar[u]
\end{tikzcd}
\]
The above diagram suggests that the persistent cup module $\cupmodule(\Xfunc)$ has the structure of a 2D persistence module, which we still denote as $\cupmodule(\Xfunc)$ but view as a functor 
$\cupmodule(\Xfunc):(\N^+,\leq)\times(\bbR,\leq) \to \GVect^{\mathrm{op}}\text{ with }(\ell,t)\mapsto (\bfH^{ + }\left( X_t)\right)^{\ell}.$
Two-dimensional persistent modules have wild types of indecomposables in most cases \citep{leszczynski1994representation,leszczynski2000tame,bauer2020cotorsion}, making them difficult to study (see Sec.~\ref{sec:2d cup module} for details). 
Therefore, in Sec.~\ref{sec:flag cup module}, we concentrate on studying $\cupmodule(\Xfunc)$ as a persistent graded flag, and taking the point of view of generalized persistent diagrams \citep[Defn.~7.1]{patel2018generalized}.

Flags can be completely characterized by a non-increasing sequence of integers, where each integer is the dimension of the corresponding vector space (see Prop.~\ref{prop:decomposition of Flag}). We call such non-increasing sequence of integers the \textbf{dimension of a flag}, and write it as
\[\fdim(V_1\supseteq  V_2\supseteq \cdots):=\big(\dimension(V_1),\dimension(V_2),\dots\big).\]
We define the \textbf{rank invariant} of flags as the map $\rank:\Flag\to \N^\infty$ sending each flag to its dimension and each flag morphism to the dimension of its image; see Defn.~\ref{def:rank}. This invariant is clearly a ($\N^\infty$-valued) categorical invariant and thus can be lifted to a persistent invariant. Similarly, we define the \textbf{dimension of a graded flag}, to be a matrix such that each row is the dimension of the flag in the corresponding degree: The dimension $\fdim\left(\bigoplus_{p\geq 1} V^{p}_1 \supseteq  \bigoplus_{p\geq 1} V^{p}_2 \supseteq  \cdots \supseteq \bigoplus_{p\geq 1} V^{p}_\ell \supseteq  \cdots\right)$ is defined as 
\[
\begin{pmatrix}
\dimension(V_1^1) & \dimension(V_2^1) & \cdots & \dimension(V_\ell^1) & \cdots \\
\dimension(V_1^2) & \dimension(V_2^2) & \cdots & \dimension(V_\ell^2) & \cdots \\
\cdots  & \cdots  & \cdots  & \cdots & \cdots \\
\dimension(V_1^p) & \dimension(V_2^p) & \cdots & \dimension(V_\ell^p) & \cdots \\
\cdots  & \cdots  & \cdots  & \cdots & \cdots 
\end{pmatrix},
\]
The \textbf{rank invariant} of graded flags is defined similar to the non-graded case and will also be denoted as $\rank$. See Fig.~\ref{fig:torus_sphere_VR_flag_rank}. 

\begin{figure}[H]
\centering
     \begin{tikzpicture}[scale=0.6]
    \begin{axis} [ 
    ticklabel style = {font=\large},
    axis y line=middle, 
    axis x line=middle,
    ytick={0.5,0.67,0.95},
    yticklabels={$\tfrac{\pi}{2}$,$\tfrac{2\pi}{3}$,$\pi$},
    xtick={0.5,0.58,0.95},
    xticklabels={$\tfrac{\pi}{2}$,$\zeta_3$, $\pi$},
    xmin=0, xmax=1.1,
    ymin=0, ymax=1.1,]
    \addplot [mark=none] coordinates {(0,0) (1,1)};
    \addplot [thick,color=dgmcolor!40!white,fill=dgmcolor!40!white, 
                    fill opacity=0.45]coordinates {
            (0,0.58)
            (0,0)
            (0.58,0.58)
            (0,0.58)};
    \addplot [thick,color=black!10!white,fill=black!10!white, 
                    fill opacity=0.4]coordinates {
            (0.58,0.95)
            (0.58,0.58)
            (0.95,0.95)
            (0.58,0.95)};
    \node[mark=none] at (axis cs:.25,.45){\scriptsize \textsf{$\begin{pmatrix}
    2 & 0\\
    1& 1\\
    1& 0
    \end{pmatrix}$}};
    \addplot [thick,color=dgmcolor!20!white,fill=dgmcolor!20!white, 
                    fill opacity=0.4]coordinates {
            (0,0.58)
            (0.58,0.58)
            (0.58,0.67)
            (0,0.67)
            (0,0.58)};
    \node[mark=none] at (axis cs:.25,.85){\scriptsize \textsf{$\begin{pmatrix}
    2 & 0\\
    1& 1\\
    0& 0
    \end{pmatrix}$}};
    \draw [->] (0.1,0.64) to [out=120,in=240] (0.1,0.8);
\node[mark=none] at (axis cs:0.68,0.21){$\rank(\Phi(\bbT^2\vee\bbS^3))$};
    \end{axis}
    \end{tikzpicture}
    \hspace{1.5cm}
    \begin{tikzpicture}[scale=0.6]
    \begin{axis} [ 
    ticklabel style = {font=\large},
    axis y line=middle, 
    axis x line=middle,
    ytick={0.5,0.67,0.95},
    yticklabels={$\tfrac{\pi}{2}$,$\tfrac{2\pi}{3}$,$\pi$},
    xtick={0.5,0.6,0.95},
    xticklabels={$\tfrac{\pi}{2}$,$\zeta_2$, $\pi$},
    xmin=0, xmax=1.1,
    ymin=0, ymax=1.1,]
    \addplot [mark=none] coordinates {(0,0) (1,1)};
    \addplot [thick,color=dgmcolor!40!white,fill=dgmcolor!40!white, 
                    fill opacity=0.45]coordinates {
            (0,0.6)
            (0,0)
            (0.6,0.6)
            (0,0.6)};
    \addplot [thick,color=black!10!white,fill=black!10!white, 
                    fill opacity=0.4]coordinates {
            (0.6,0.95)
            (0.6,0.6)
            (0.95,0.95)
            (0.6,0.95)};
    \node[mark=none] at (axis cs:.25,.45){\scriptsize \textsf{$\begin{pmatrix}
    2 & 0\\
    1& 0\\
    1& 1
    \end{pmatrix}$}};
    \addplot [thick,color=dgmcolor!20!white,fill=dgmcolor!20!white, 
                    fill opacity=0.4]coordinates {
            (0,0.6)
            (0.6,0.6)
            (0.6,0.67)
            (0,0.67)
            (0,0.6)};
    \node[mark=none] at (axis cs:.25,.85){\scriptsize \textsf{$\begin{pmatrix}
    2 & 0\\
    0& 0\\
    0& 0
    \end{pmatrix}$}};
    \draw [->] (0.1,0.64) to [out=120,in=240] (0.1,0.8);
\node[mark=none] at (axis cs:0.68,0.21){$\rank(\Phi((\bbS^1\times\bbS^2)\vee\bbS^1))$};
    \end{axis}
    \end{tikzpicture}
\caption{Rank invariants $\rank\left(\Phi(\VR\left(\bbT^2\vee\bbS^3\right))\right)$ (left) and $\rank\left(\Phi(\VR\left((\bbS^1\times\bbS^2)\vee\bbS^1\right))\right)$ (right) of persistent cup modules (up to degree $3$) arising from Vietoris-Rips filtrations of $\bbT^2\vee\bbS^3$ and $(\bbS^1\times\bbS^2)\vee\bbS^1$, respectively. See Ex.~\ref{sec:flag>cup}.} 
\label{fig:torus_sphere_VR_flag_rank}
\end{figure}

We show that the erosion distance $\de$ between persistent cup modules is stable under the homotopy-interleaving distance $\dhi$ between persistent spaces, which is a consequence of Thm.~\ref{thm:stab-per-inv}. In addition, the stability of persistent cup modules improves the stability of the persistent cup-length invariant. Recall from \citep{CSGO16} that a standard persistence module $\Mfunc:(\bbR,\leq )\to \Vect$ is \emph{$\mathrm{q}$-tame} if it satisfies the condition that $\rank(M_t\to M_{t'})<\infty$ whenever $t<t'$. \label{para:q-tame}

\begin{restatable}{theorem}{cupmodulestab} 
\label{thm:stability dE_dB_dGH} 
For persistent spaces $\Xfunc$ and $\Yfunc $ with $\mathrm{q}$-tame persistent (co)homology, we have
\[
d_{\mathrm{E}}(\cupprod(\Xfunc),\cupprod(\Yfunc))\leq \de\left( \rank( \cupmodule(\Xfunc)), \rank( \cupmodule(\Yfunc))\right)
\leq  \dhi \left(\Xfunc,\Yfunc \right).\]
For the Vietoris-Rips filtrations $\VR_\bullet(X)$ and $\VR_\bullet(Y)$ of two metric spaces $X$ and $Y$, all the above quantities are bounded above by $2\cdot\dgh \left(X,Y \right).$ 
\end{restatable}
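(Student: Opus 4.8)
The plan is to establish the two displayed inequalities separately and then specialize to Vietoris-Rips filtrations. Throughout, I would use q-tameness to guarantee that every image ring $\image(\bfH^*( X_b)\to\bfH^*( X_a))$ is finite-dimensional, so that the associated flags have finite depth and finite-dimensional terms; this is what makes all of the rank invariants and erosion distances below well-defined and finite.

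For the leftmost inequality, the key observation is that the persistent cup-length invariant is a \emph{monotone readout} of the rank invariant of the persistent cup module. Concretely, for a map $f\colon X\to Y$ the induced $f^*\colon\bfH^+(Y)\to\bfH^+(X)$ is a ring homomorphism, so $f^*\big((\bfH^+(Y))^\ell\big)=(\image^+)^\ell$, where $\image^+=\image\big(f^*\colon\bfH^+(Y)\to\bfH^+(X)\big)$. Hence the image of the flag morphism $\cupmodule(f)$ is precisely the flag $\image^+\supseteq(\image^+)^2\supseteq\cdots$, and its dimension matrix $\rank(\cupmodule(f))$ has nonzero $\ell$-th column exactly when $(\image^+)^\ell\neq 0$. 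Defining $\lambda$ to send a dimension matrix to the largest index $\ell$ of a nonzero column, I obtain $\cupprod(\Xfunc)([a,b])=\lambda\big(\rank(\cupmodule(\Xfunc))([a,b])\big)$ for every interval $[a,b]$. I would then record the elementary fact that $\lambda$ is order-preserving (more dimensions entrywise force a longer or equal flag) and that applying a common order-preserving map pointwise can only decrease the erosion distance: if the $\epsilon$-enlarged value of one invariant is dominated by the other for all intervals, the same domination survives after applying $\lambda$ to both sides. This yields $d_{\mathrm{E}}(\cupprod(\Xfunc),\cupprod(\Yfunc))\le\de(\rank(\cupmodule(\Xfunc)),\rank(\cupmodule(\Yfunc)))$.

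For the right-hand inequality, I would realize $\rank\circ\cupmodule$ as a categorical invariant of $\Top$ and invoke Thm.~\ref{thm:main-stability}. Since $\cupmodule$ is a (contravariant) functor to $\grFlag$ and $\rank$ is a categorical invariant of graded flags, the composite assigns to each morphism $f$ the value $\rank(\cupmodule(f))$; the identity and submultiplicativity axioms follow from $\cupmodule(g\circ f)=\cupmodule(f)\circ\cupmodule(g)$ together with the fact that the image of a composite of flag morphisms is contained in, and factors through, the images of its factors. Moreover $\rank(\cupmodule(\Xfunc))([a,b])=\rank(\cupmodule(f_a^b))$ agrees with the rank invariant of the persistent graded flag. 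To apply Thm.~\ref{thm:main-stability} I must verify its weak-homotopy-equivalence hypothesis: if $g$ is a weak homotopy equivalence then $g^*$ is an isomorphism of cohomology rings over $\field$, hence $\cupmodule(g)$ is an isomorphism of flags; since precomposing or postcomposing a flag morphism with an isomorphism leaves its image, and therefore its rank, unchanged, one gets $\rank(\cupmodule(g\circ f))=\rank(\cupmodule(f))$ and $\rank(\cupmodule(h\circ g))=\rank(\cupmodule(h))$. Thm.~\ref{thm:main-stability} then gives $\de(\rank(\cupmodule(\Xfunc)),\rank(\cupmodule(\Yfunc)))\le\dhi(\Xfunc,\Yfunc)$.

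Finally, for Vietoris-Rips filtrations I would specialize: Eqn.~(\ref{eq:dE-dGH}) of Thm.~\ref{thm:main-stability}, applied to the invariant $\rank\circ\cupmodule$, bounds the middle quantity by $2\cdot\dgh(X,Y)$, and the first inequality of the theorem then bounds the left-hand quantity by the same, giving the claimed bound for all three expressions. I expect the main obstacle to be the first step: pinning down the exact identity $\cupprod=\lambda\circ\rank\circ\cupmodule$ and the behavior of $\lambda$ under erosion, since this is where the cup-length invariant is genuinely recovered from the finer cup-module data, and is what makes the middle term a sharper stability bound than the cup-length invariant alone. The remaining steps are applications of the already-established homotopical stability theorem once the weak-homotopy invariance of $\rank\circ\cupmodule$ has been checked.
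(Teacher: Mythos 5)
Your proposal is correct and follows essentially the same route as the paper: the first inequality is the paper's Lem.~\ref{lem:de-de}, whose content is exactly your identity $\image(f^{\ell})=(\image f)^{\ell}$ together with the observation that the cup-length is a monotone readout (largest $\ell$ with nonzero $\ell$-th column) of $\rank(\cupmodule(\cdot))$, so erosion can only decrease under it. The second inequality and the Gromov--Hausdorff bound are obtained in the paper just as you describe, by checking that a weak homotopy equivalence induces a graded flag isomorphism (so $\rank\circ\cupmodule$ satisfies the hypothesis of the homotopical stability theorem) and then invoking Prop.~\ref{prop:dh-dgh}.
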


For a fixed $\ell$, we call the functor \[\cupmodule^\ell(\Xfunc):(\bbR,\leq) \to \GVect^{\mathrm{op}}\text{ with }t\mapsto (\bfH^{ + }\left( X_t)\right)^{\ell}\] 
the \textbf{persistent $\ell$-cup module} of $\Xfunc$.
For any $p,\ell\geq 1$, 
we let $\barc\!\left(\deg_p\left(\cupmodule^\ell(\Xfunc)\right)\right)$ be the barcode of the degree-$p$ component of $\cupmodule^\ell(\Xfunc)$. We also show that the bottleneck distance $\db$ between  $\barc\!\left(\deg_p\left(\cupmodule^\ell(\cdot)\right)\right)$ is stable under $\dhi$ between persistent spaces: 

\begin{restatable}{proposition}{ellpbarcode} 
\label{prop:stab-l-cup module}
For persistent spaces $\Xfunc,\Yfunc $, we have
\[
 \max_{\ell,p}\db\left( \barc\!\left(\deg_p\left(\cupmodule^\ell(\Xfunc)\right)\right),\barc\!\left(\deg_p\left(\cupmodule^\ell(\Yfunc)\right)\right)\right)
 \leq \dhi \left(\Xfunc,\Yfunc \right).\]
 For the Vietoris-Rips filtrations $\VR_\bullet(X)$ and $\VR_\bullet(Y)$ of two metric spaces $X$ and $Y$, all the above quantities are bounded above by $2\cdot\dgh \left(X,Y \right).$ 
\end{restatable}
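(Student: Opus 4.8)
The plan is to reduce to the classical isometry theorem (bottleneck stability) for one-parameter persistence modules, applied separately to each pair $(\ell,p)$. The starting observation is that for fixed $\ell,p\geq 1$ the functor $\deg_p(\cupmodule^\ell(\Xfunc)):(\bbR,\leq)\to\Vect^{\mathrm{op}}$ is an ordinary one-parameter persistence module, whose structure maps point in the reverse direction as befits cohomology. Under the standing tameness hypotheses that make persistent cohomology well-behaved (e.g.\ $\mathrm q$-tameness, as in Thm.~\ref{thm:stability dE_dB_dGH}), its barcode $\barc(\deg_p(\cupmodule^\ell(\Xfunc)))$ is well-defined and the bottleneck distance makes sense.

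First I would unwind the definition of $\dhi$. Fix $\epsilon>\dhi(\Xfunc,\Yfunc)$ and choose an $\epsilon$-homotopy-interleaving: persistent spaces $\Xfunc',\Yfunc'$ that are objectwise weakly homotopy equivalent to $\Xfunc,\Yfunc$, together with a strict $\epsilon$-interleaving between $\Xfunc'$ and $\Yfunc'$. Applying positive-degree cohomology $\bfH^{+}(-)$, which is a homotopy invariant and is contravariant, turns the weak equivalences into isomorphisms of persistent rings and turns the $\epsilon$-interleaving into an $\epsilon$-interleaving (with reversed arrows) between the persistent cohomology rings $\bfH^{+}(\Xfunc)$ and $\bfH^{+}(\Yfunc)$.

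The next step is the key algebraic point. The interleaving maps are induced by continuous maps of spaces, hence are graded ring homomorphisms and in particular commute with the cup product; consequently they carry $\ell$-fold products to $\ell$-fold products and so restrict to an $\epsilon$-interleaving between $(\bfH^{+}(\Xfunc))^\ell$ and $(\bfH^{+}(\Yfunc))^\ell$. Since cohomology maps preserve the cohomological grading, this interleaving further restricts, for every $p$, to an $\epsilon$-interleaving of the one-parameter modules $\deg_p(\cupmodule^\ell(\Xfunc))$ and $\deg_p(\cupmodule^\ell(\Yfunc))$. This is exactly the ingredient already established en route to Thm.~\ref{thm:stability dE_dB_dGH}, so I would reuse it. I would then invoke the isometry theorem: for each $\ell,p$ the bottleneck distance between the two barcodes is at most the interleaving distance between the modules, hence at most $\epsilon$. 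Taking the maximum over $\ell,p$ and the infimum over $\epsilon>\dhi(\Xfunc,\Yfunc)$ yields the first inequality, and the Vietoris--Rips statement follows by combining it with Prop.~\ref{prop:dh-dgh}, which gives $\dhi(\VR_\bullet(X),\VR_\bullet(Y))\leq 2\,\dgh(X,Y)$.

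The main obstacle is the middle step: one must verify carefully that the homotopy-interleaving genuinely produces maps on cohomology that respect the cup product (so that the $\ell$-cup module structure, and not merely the underlying graded vector spaces, is interleaved), and that routing through the auxiliary spaces $\Xfunc',\Yfunc'$ does not disturb this ring-level compatibility. Because the structure maps of a homotopy-interleaving are by construction induced by (zig-zags of) maps of spaces, naturality of the cup product handles this; but the bookkeeping of contravariance together with the degree-$p$ splitting is where the argument requires care.
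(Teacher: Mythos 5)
Your proposal is correct and follows essentially the same route as the paper: pass the homotopy-interleaving through the (contravariant) cohomology-algebra functor, restrict to the $\ell$-fold products and their degree-$p$ components, and finish with bottleneck stability and Prop.~\ref{prop:dh-dgh}. The paper merely packages the middle steps as a chain of interleaving-distance inequalities along the functors $\Alg\to\grFlag\to\GVect\to\Vect$ (citing functoriality of interleavings and a result of Ginot--Leray for the passage from $\dhi$ to $\di^{\Alg}$), whereas you verify the same compatibilities by hand.
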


In Ex.~\ref{sec:flag>cup}, we use the spaces $\bbT^2\vee\bbS^3$ and $(\bbS^1\times\bbS^2)\vee\bbS^1$ to demonstrate that the rank invariant of persistent cup modules and the barcode of persistent $\ell$-cup modules provide better lower bounds for the Gromov-Hausdorff distance than those given by persistent cup-length (or LS-category) invariants and persistent homology. This shows the ability of persistent cup modules to distinguish between spaces and capture additional important topological information.\\

In Prop.~\ref{prop:dgm_p,l}, we prove that the persistent cup-length invariant can be obtained from the persistence diagrams of all $\cupmodule^\ell(\Xfunc)$. This is another piece of evidence that the persistent cup module is a richer structure than the persistent cup-length invariant.

\medskip\noindent\textbf{Organization of the paper.} 
In Sec.~\ref{sec:pers-theo}, we provide an overview of persistent theory and discuss the general concept of persistent objects. 
In Sec.~\ref{sec:cat-inv}, we define \textit{categorical invariants}, and see that every categorical invariant gives rise to a persistent invariant. 
In Sec.~\ref{sec:Persistent-cup-length}, we recall our previous work on the \textit{persistent cup-length invariant} of a topological filtration, including the graded ring structure of cohomology which is yielded by the \textit{cup product}, the notion of \textit{persistent cup-length diagram}, the idea of our proposed algorithm, as well as additional details and examples. In Sec.~\ref{sec:ls-cat}, we introduce the \textit{persistent LS-category invariant} and show that it is pointwisely bounded below by the persistent cup-length invariant. In Sec.~\ref{sec:mobuis}, we study the M\"obius inversion of persistent invariants. 
We show that for persistent cup-length invariant and persistent LS-category, their M\"obius inversion can return negative values.
In Sec.~\ref{sec:all-stability}, we establish the stability of persistent invariants, and prove Thm.~\ref{thm:stab-per-inv}, Thm.~\ref{thm:main-stability}, Cor.~\ref{cor:stab-cup}, and Cor.~\ref{cor:stab-cat}. 
In Sec.~\ref{sec:cup module}, we study the $\ell$-fold products of persistent cohomology algebras both as a persistent graded flag (see Sec.~\ref{sec:flag cup module}) and a 2D persistence module (see Sec.~\ref{sec:2d cup module}). In the former case, we identify a complete invariant for flags, and lift it to a persistent invariant which is stable and improves the stability of the persistent cup-length invariant. 

\renewcommand{\nomname}{List of Symbols}
\renewcommand{\nompreamble}{Throughout the paper we fix an arbitrary field $\field$ for the coefficients of (co)homology.}

\nomenclature[1]{\(\N,\N^+,\bbZ\)}{Set of non-negative integers, positive integers and integers, respectively}
\nomenclature[1]{\(\bbR\)}{Set of real numbers}

\nomenclature[2]{\( X, Y, Z,\dots \)}{Topological spaces}
\nomenclature[2]{\(R,S,\dots \)}{Graded rings}
\nomenclature[2]{\(A,B,\dots \)}{Graded algebras}

\nomenclature[3]{\(\catC\)}{A category}
\nomenclature[3]{\(\catC^{\mathrm{op}} \)}{Opposite category of $\catC$}
\nomenclature[3]{\(\ob(\catC)\)}{Objects of a Category $\catC$}
\nomenclature[3]{\(\mor(\catC)\)}{Morphisms of a Category $\catC$}

\nomenclature[4,1]{\(\catN\)}{A poset category}
\nomenclature[4,2]{$(\Int,\subseteq)$}{Poset of inclusions of  closed intervals of the real line.}

\nomenclature[4,8]{\(\Flag\)}{Category of finite-depth flags; see pg.~\pageref{def:flag}}
\nomenclature[4,9]{\(\grFlag\)}{Category of finite-depth graded flags; see pg.~\pageref{para:graded flag}}
\nomenclature[4,8]{\(\Flag_{\operatorname{fin}}\)}{Category of finite-depth flags of finite-dimensional vector spaces; see pg.~\pageref{def:flag}}
\nomenclature[4,9]{\(\grFlag_{\operatorname{fin}}\)}{Category of finite-depth flags of finite-dimensional graded vector spaces; see pg.~\pageref{para:graded flag}}

\nomenclature[4,3]{\(\Top\)}{Category of compactly generated weak Hausdorff topological spaces}
\nomenclature[4,6]{\(\Vect\)}{Category of vector spaces over $\field$}
\nomenclature[4,4]{\(\Ring\)}{Category of graded rings}
\nomenclature[4,5]{\(\Alg\)}{Category of graded algebras}
\nomenclature[4,7]{\(\GVect\)}{Category of graded vector spaces over $\field$}

\nomenclature[5,1]{\(\Ffunc,\Gfunc,\dots \)}{Persistent objects in a given category $\catC$, i.e. functors from a poset category to $\catC$} 
\nomenclature[5,3]{\(\Xfunc, \Yfunc,\dots\)}{Persistent spaces (e.g. a filtration of a given space)}
\nomenclature[5,4]{\(\Mfunc\)}{Standard persistent module, i.e. a persistent vector space}

\nomenclature[6,2]{\(\bfH^*( X)\)}{Graded cohomology ring $\bigoplus_{p\in\N}\bfH^p( X) $ of a space $ X$}
\nomenclature[6,2]{\(\bfH^+(X)\)}{Positive-degree cohomology $\bigoplus_{p\geq 1}\bfH^p(X)$ of a space $ X$}
\nomenclature[6,1]{\(\bfH^p( X) \)}{Degree-$p$ cohomology of a space $ X$}

\nomenclature[6,3]{\(\cupmodule(\Xfunc)\)}{Persistent cup module of $\Xfunc$; see Defn.~\ref{def:cup module}}

\nomenclature[6,4]{\(\cupmodule^\ell(\Xfunc)\)}{Persistent $\ell$-cup module of $\Xfunc$; see Defn.~\ref{def:l-cup module}}

\nomenclature[6,5]{\(\deg_p(\cdot)\)}{Degree-$p$ component of a graded vector space or persistent graded vector space, e.g. $\deg_p(\bfH^*( X))=\bfH^p( X)$}

\nomenclature[6,6]{\(\barc(\Mfunc)\)}{Barcode associated to a standard persistence module $\Mfunc:(\bbR,\leq)\to \Vect$. In this paper, we consider $\Mfunc=\bfH^p(\Xfunc),\bfH^*(\Xfunc),\bfH^+(\Xfunc),\deg_p\left(\cupmodule^\ell(\Xfunc)\right)$ or $\cupmodule^\ell(\Xfunc)$}

\nomenclature[7,1]{\(\invariant\)}{A categorical invariant, i.e. a map $\invariant:\ob(\catC)\sqcup\mor(\catC)\to\catN$ satisfying Defn.~\ref{def:categorical invariant}}
\nomenclature[7,2]{\(\dgm(\invariant(\Ffunc))\)}{Persistence $\invariant$-diagram of a persistent object $\Ffunc$, i.e. the M\"obius inversion of $\invariant(\Ffunc)$; see Defn.~\ref{def:I-dgm}}

\nomenclature[7,3]{\(\rank(\Mfunc)\)}{Rank invariant of a standard persistence module $\Mfunc$; see Ex.~\ref{ex:pers-rank}}
\nomenclature[7,4]{\(\dgm(\Mfunc)\)}{Persistence diagram of a standard persistence module $\Mfunc$, equivalently, $\dgm(\Mfunc)=\dgm(\rank(\Mfunc))$ the M\"obius inversion of $\rank(\Mfunc)$}
\nomenclature[7,5]{\(\len(R)\)}{Length of a graded ring $R$; see Defn.~\ref{def:cup-length}}
\nomenclature[7,6]{\(\cupprod( X)\)}{Cup-length $\len(\bfH^*( X))$ of a space $ X$; see Defn.~\ref{def:cup-length}}
\nomenclature[7,7]{\(\cupprod(\Xfunc) \)}{Persistent cup-length invariant of a persistent space $\Xfunc$, see Defn.~\ref{def:cup_l_func}}
\nomenclature[7,8]{\(\cat( X)\)}{LS-Category of $ X$; see Defn.~\ref{def:cat(X)}}
\nomenclature[7,9]{\(\cat(\Xfunc) \)}{Persistent LS-category of a persistent space $\Xfunc$, see Defn.~\ref{def:pers-ls}}

\nomenclature[8,1]{\(V_\star\)}{Flag of vector spaces $ V_1 \supseteq  V_2 \supseteq  \cdots$, where $\star$ represents the depth; see Defn.~\ref{def:flag}}
\nomenclature[8,2]{\(V^{\circ}_\star\)}{Flag of graded vector spaces $ \bigoplus_{p\geq 1} V^{p}_1 \supseteq  \bigoplus_{p\geq 1} V^{p}_2 \supseteq  \cdots$, where $\circ$ represents the degree; see pg.~\pageref{para:graded flag}}
\nomenclature[8,3]{\(V_{\star,\bullet},\,V^{\circ}_{\star,\bullet}\)}{Persistent flag and persistent graded flag, resp. Here $\bullet$ represents the filtration parameter}
\nomenclature[8,4]{\(\fdim(V_\star)\)}{Dimension of a flag $V_\star$; see Defn.~\ref{def:dim}}
\nomenclature[8,5]{\(\rank(V_{\star,\bullet})\)}{Rank invariant of a persistent flag; see Defn.~\ref{def:rank}}

\nomenclature[9,2]{\(d_{\mathrm{HI}}\)}{Homotopy interleaving distance; see Defn.~\ref{def:dhi}}
\nomenclature[9,1]{\(\di\)}{Interleaving distance; see Defn.~\ref{def:interleaving-dist}. When needed, we will write $\di^{\catC}$ to indicate the underlying category $\catC$}
\nomenclature[9,3]{\(d_{\mathrm{E}}\)}{Erosion distance; see Defn.~\ref{def:de}}
\nomenclature[9,4]{\(d_{\mathrm{GH}}\)}{Gromov-Hausdorff distance, cf. \citep{gromov2007metric}}
\nomenclature[9,5]{\(\db\)}{Bottleneck distance, cf. \citep{cohen2007stability}}

\printnomenclature

\section{Persistent invariants}
\label{sec:pers-inv}
In this section, we define the notions of \emph{invariants} and \emph{persistent invariants} in a general setting. 

In classical topology, an invariant is a numerical quantity associated to a given topological space that remains invariant under a homeomorphism. In linear algebra, an invariant is a numerical quantity that remains invariant under a linear isomorphism of vector spaces. Extending these notions to the general `persistence' setting from TDA, leads to the study of persistent invariants, which are designed to
extract and quantify important information about TDA structures, such as 
the \textit{rank invariant} for persistent vector spaces 
\citep[Defn.~11]{carlsson2007theory}.
We study two other persistent invariants: the \textit{persistent cup-length invariant} of persistent spaces \citep[Defn.~7]{contessoto_et_al:LIPIcs.SoCG.2022.31} (see also Sec.~\ref{sec:Persistent-cup-length}) and the \textit{persistent LS-category invariant} of persistent spaces (see Sec.~\ref{sec:ls-cat}) that we introduce.

\subsection{Persistence theory}\label{sec:pers-theo}
We recall the notions of \emph{persistent objects} and their \emph{morphisms} from \citep[Defn.~2.2]{patel2018generalized}.
For general definitions and results in category theory, we refer to \citep{awodey2010category,mac2013categories,leinster2014basic}. 

\begin{definition}\label{defi:pers-object}
Let $\catC$ be a category.
We call any functor $\Ffunc:(\bbR,\leq)\to\catC$ a \textbf{persistent object (in $\catC$)}.
Specifically, a persistent object $\Ffunc:(\bbR,\leq)\to\catC$ consists of 
\begin{itemize}
    \item for each $t\in\bbR$, an object $ F_t$ of $\catC$,
    \item for each inequality $t\leq s$ in $\bbR$, a morphism $f_t^s: F_t\to F_s$, such that 
    \begin{itemize}
        \item $f_t^t=\id _{ F_t}$
        \item $f_s^r\circ f_t^s=f_t^r$, for all $t\leq s\leq r$.
    \end{itemize}
\end{itemize}
\end{definition}

\begin{definition}
Let $\Ffunc,\Gfunc:(\bbR,\leq)\to\catC$ be two persistent objects in $\catC$.
A \textbf{natural transformation from $\Ffunc$ to $\Gfunc$}, denoted by $\varphi:\Ffunc\Rightarrow \Gfunc$, consists of an $\bbR$-indexed family $(\varphi_t: F_t\to G_t)_{t\in\bbR}$ of morphisms in $\catC$, such that the diagram
\[
\xymatrix
{
 F_t\ar[rr]^{\varphi_t}\ar[d]_{f_t^s}  &  &
 G_t \ar[d]^{g_t^s} \\
	   F_s\ar[rr]^{\varphi_s}& &  G_s\\
}
\]
commutes for all $t\leq s$.
\end{definition}

\begin{example}
\begin{itemize}
    \item Let $Z$ be a finite metric space and let $\VR_t(Z)$ denote the \textit{Vietoris-Rips complex of $Z$} at the scale parameter $t$, which is the simplicial complex defined as $\VR_t(Z):=\{\alpha\subseteq Z\mid\diam(\alpha)\leq t\}.$ Let us denote   
 \[X_t :=  
      \begin{cases}
      \lvert\VR_t(Z)\rvert , & \text { if $t\geq 0$} \\
      \lvert\VR_0(Z)\rvert , &  \text{ otherwise.}
     \end{cases} \]
For each inequality $t\leq s$ in $\bbR$, we have the inclusion $\iota_t^s : X_t\hookrightarrow X_s$ giving rise to a persistent space $\Xfunc:(\bbR,\leq)\to \Top$.  

\item Applying the $p$-th homology functor to a persistent topological space $\Xfunc$, for each $t\in\bbR$ we obtain the vector space $\bfH_p( X_t)$ and for each pair of parameters $t\leq s$ in $\bbR$, we have the linear map in (co)homology induced by the inclusion $ X_t\hookrightarrow X_s$. This is another example of a persistent object, namely a \textit{persistent vector space} $\bfH_p(\Xfunc):(\bbR,\leq)\to \Vect$. Dually, by applying the $p$-th cohomology functor, we obtain a persistent vector space $\bfH^p(\Xfunc):(\bbR,\leq)\to \Vect$ which is a contravariant functor.
\end{itemize}
\end{example}

In the literature, different types of invariants have been identified to study properties of persistent objects based on the category they lie in. For example:
\begin{example}
\begin{itemize}
    \item  For the category of finite sets, $\Set$, whose morphisms are functions between finite sets, we consider $\invariant :\ob(\Set)\to\N$ to be the \textbf{cardinality invariant}.
    \item For the category of vector spaces over $\field$, $\Vect$, whose morphisms are linear maps, we consider $\invariant :\ob(\Vect)\to\N$ to be the \textbf{dimension invariant}. 
    \item  For the category of topological spaces, $\Top$, whose morphisms are continuous maps, we consider $\invariant :\ob(\Top)\to\N$ to be the invariant that counts the \textbf{number of connected components}.
   \item  For the category of smooth manifolds, $\mathbcal{Man}$, whose morphisms are 
   smooth maps, we consider $J:\ob(\mathbcal{Man})\to\N$ to be the \textbf{genus invariant}. 
\end{itemize}
\end{example}

\noindent\textbf{Persistence modules, barcodes and persistence diagrams.} A persistent object in $\Vect$ is also called a \textbf{(standard) persistence module}. An \emph{interval module} associated to an interval $[a,b]$ is the persistence module, denoted by $K[a,b]$ such that 
\[K[a,b](t)=\begin{cases}
    K, &\mbox{$t\in [a,b]$}\\
    0, &\mbox{$t\notin [a,b]$}
\end{cases}\text{   and    }K[a,b](t\leq s)=\begin{cases}
    \id_{K}, &\mbox{$[t,s]\subseteq [a,b]$}\\
    0, &\mbox{otherwise.}
\end{cases}
\]

When $\Mfunc$ can be decomposed as a direct sum of interval modules (e.g. when $\Mfunc$ is $\mathrm{q}$-tame \citep[Defn.~1.12]
{oudot2015persistence}), 
say $\Mfunc\cong \bigoplus_{l\in L}K[a_l,b_l]$, 
the \textbf{barcode} of $\Mfunc$ is defined as the multiset 
$$\barc(\Mfunc):=\{ [a_l,b_l]:l\in L\},$$
where the elements $[a_l,b_l]$ are called \textit{bars}.
The \textbf{persistence diagram} of $\Mfunc$ is the map $\dgm(\Mfunc):\Int\to \N$ such that $\dgm(\Mfunc)([a,b])$ is the multiplicity of $[a,b]$ in $\barc(\Mfunc)$. It is clear that $\barc(\Mfunc)$ and $\dgm(\Mfunc)$ determine each other. Later in Ex.~\ref{ex:rk and dgm}, we recall that the persistence diagram is the M\"obius inversion of the rank invariant.
\medskip

In the following subsection, we study more general persistent objects and identify a general condition on invariants so that they can be used to study these persistent objects.

\subsection{Persistent \texorpdfstring{$\catN$}{caN}-valued categorical invariants}
\label{sec:cat-inv}

We introduce the notion of $\catN$-valued \emph{categorical invariants}, where $\catN$ is a poset category with a partial order $\geq$ (e.g., $\catN=\N$ or $\N^\infty$), and devise a method for lifting such invariants to persistent invariants.

\begin{definition}
\label{def:categorical invariant}
Let $\catC$ be any category and let $\mor(\catC)$ denote the collection of all morphisms of $\catC$.
A $\catN$-valued invariant $\invariant :\ob(\catC)\to\catN$ of $\catC$ is said to be a \textbf{$\catN$-valued categorical invariant of $\catC$}, and denoted by $\invariant :\ob(\catC)\sqcup\mor(\catC)\to\catN$, if $\invariant $ extends to 
\begin{align*}
    \invariant :\mor(\catC)&\to\catN\\
    f &\mapsto \invariant (f),
\end{align*}
a map on the class $\mor(\catC)$ of morphisms in $\catC$ such that 
\begin{itemize}
    \item [(i)] $\invariant (\id _X)=\invariant (X)$,~for all $X\in \ob(\catC)$, and
    \item [(ii)] for any commutative diagram of the following form:
     \[
         \begin{tikzcd}[column sep = 4em]
		Y \ar[r,"g"]
        &
        W
        \ar[d,"h" right]
        \\
		 X \ar[r,"h\circ g\circ f" below]
		 \ar[u, "f" left]
        &
        Z,
		\end{tikzcd}
     \]
	we have \[\invariant(h\circ g\circ f)\leq \invariant(g).\]
\end{itemize}
\end{definition}

\begin{remark}\label{rmk:invariant preserves iso}
A categorical invariant preserves isomorphisms in the underlying category. This follows immediately from Condition (ii) of Defn.~\ref{def:categorical invariant}: for any isomorphism $f:X\to Y$ in a given category $\catC$, 
\[\invariant(f)=\invariant(f\circ f^{-1}\circ f)\leq \invariant(f^{-1})\]
and similarly $\invariant(f^{-1})\leq \invariant(f).$
\end{remark}

Condition (ii) of Defn.~\ref{def:categorical invariant} also implies that for a persistent object $\Ffunc:(\mathbb{R},\leq)\to \catC$, we have 
$$[a,b]\subseteq [c,d]\Rightarrow \invariant \left(f_a^b\right)\geq \invariant \left(f_c^d\right).$$
Thus, we can associate a functor $(\Int,\subseteq)\to(\catN,\leq)^{\mathrm{op}}$ to each persistent object in $\catC$ as follows:

\begin{definition}\label{def:p_invariant}
Let $\catC$ be a category and let $\invariant $ be a $\catN$-valued categorical invariant. 
For any given persistent object $\Ffunc:(\bbR,\leq)\to\catC$, we associate the functor 
\begin{align*}
    \invariant (\Ffunc):(\Int,\subseteq)&\to(\catN,\leq)^{\mathrm{op}}\\
    [a,b]&\mapsto \invariant \left(f_a^b\right).
\end{align*}

We call $\invariant (\Ffunc)$ the \textbf{persistence $\invariant $-invariant associated to $\Ffunc$}.
\end{definition}

We establish an equivalent definition of Defn.~\ref{def:categorical invariant} (2), which is easier to use when checking whether an invariant is a categorical invariant.

\begin{proposition} [Equivalent definition of categorical invariant]
\label{prop:eqvi def for cat inv}
A $\catN$-valued invariant $\invariant $ is a categorical invariant, if and only if
\begin{itemize}
    \item [(i)] $\invariant (\id _X)=\invariant (X)$,~for all $X\in \ob(\catC)$, and
    \item [(ii')] for any $f:X\to Y$ and $g:Y\to Z$ in $\catC$, $\invariant (g\circ f)\leq \min\{ \invariant (f), \invariant (g)\}$.
\end{itemize}
\end{proposition}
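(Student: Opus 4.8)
The plan is to prove the two implications separately, holding condition (i) fixed on both sides (it is literally identical in both characterizations), so that the real content is the equivalence of the original composition axiom (ii) of Defn.~\ref{def:categorical invariant} with the symmetric inequality (ii').

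For the forward direction, I would assume $\invariant$ is a categorical invariant and fix a composable pair $X \xrightarrow{f} Y \xrightarrow{g} Z$. The two halves of (ii') come from padding this pair with an identity on either end and invoking (ii) for the resulting triple. To obtain $\invariant(g \circ f)\leq \invariant(g)$, apply (ii) to the triple $X \xrightarrow{f} Y \xrightarrow{g} Z \xrightarrow{\id_Z} Z$: since $\id_Z \circ g \circ f = g \circ f$ and $g$ plays the role of the middle map, the axiom yields exactly $\invariant(g\circ f)\leq \invariant(g)$. To obtain $\invariant(g \circ f)\leq \invariant(f)$, apply (ii) instead to $X \xrightarrow{\id_X} X \xrightarrow{f} Y \xrightarrow{g} Z$, where now $f$ is the middle map and $g \circ f \circ \id_X = g \circ f$; the axiom gives $\invariant(g\circ f)\leq \invariant(f)$. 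Taking the minimum of the two bounds produces (ii').

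For the reverse direction, I would assume (i) and (ii') and consider an arbitrary triple $X \xrightarrow{f} Y \xrightarrow{g} W \xrightarrow{h} Z$ as in the commutative diagram of Defn.~\ref{def:categorical invariant}. Factoring the long composite as $h \circ g \circ f = h \circ (g \circ f)$, I apply (ii') twice: first to the pair $(g \circ f, h)$ to get $\invariant(h \circ (g\circ f))\leq \invariant(g \circ f)$, and then to the pair $(f, g)$ to get $\invariant(g \circ f)\leq \invariant(g)$. Chaining these inequalities gives $\invariant(h \circ g \circ f)\leq \invariant(g)$, which is precisely (ii).

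Since no step is delicate, I do not anticipate a genuine obstacle; the only point needing minor care is the bookkeeping in the forward direction—correctly matching $f$, $g$, and the inserted identity to the roles of the three maps in the diagram of (ii)—together with the observation that only the existence of identity morphisms (not condition (i) itself) is actually used there, so that (i) is simply carried unchanged between the two characterizations.
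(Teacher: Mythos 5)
Your proposal is correct and follows essentially the same route as the paper: padding the composable pair with $\id_Z$ or $\id_X$ and invoking (ii) for the forward direction, and chaining $\invariant(h\circ g\circ f)\leq\invariant(g\circ f)\leq\invariant(g)$ via two applications of (ii') for the converse. Your side remark that only the existence of identities (not condition (i)) is used is accurate but does not alter the argument.
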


\begin{proof} We first prove that Condition (ii) implies Condition (ii'). By Condition (ii), for any $f:X\to Y$ and $g:Y\to Z$, \[\invariant( g\circ f)=\invariant(\id_Z\circ g\circ f) \leq \invariant(g).\]
Similarly, we have $\invariant( g\circ f)=\invariant( g\circ f\circ\id_X) \leq \invariant(f).$

Conversely, for any $f:X\to Y, g:Y\to W$ and $h:W\to Z$, it follows from Condition (ii') that
\[\invariant( h\circ g\circ f)\leq \invariant(g\circ f)\leq \invariant(g).\]
\end{proof}

By its definition, a categorical invariant needs to assign values to both the objects and the morphisms in a category. Below, we consider one type of invariants that are originally defined only on objects but can be easily extended to a categorical invariant by sending each morphism to the invariant evaluated on its image.

\begin{example}[epi-mono invariant]\label{ex:surjective-injective-inv}
Let $\catC$ be any \emph{regular category} (e.g.~the category of rings or the category of vector spaces).
An $\catN$-valued \textbf{epi-mono invariant} in $\catC$ is any map $\invariant :\ob(\catC)\to\catN$ such that: 
\begin{itemize}
    \item if there is a regular epimorphism $ X\twoheadrightarrow Y$, then $\invariant (X)\geq \invariant (Y)$;    
    \item if there is a monomorphism $X\hookrightarrow Y$, then $\invariant (X)\leq \invariant (Y)$.

\end{itemize}
In a regular category $\catC$, the regular epimorphisms and monomorphisms form a factorization system, and thus $\catC$ is a category with images in particular. Hence, any epi-mono invariant $\invariant :\ob(\catC)\to\catN$ of a regular category $\catC$, yields a categorical invariant $\invariant :\ob(\catC)\sqcup\mor(\catC)\to\catN$, given by $\invariant (f):=\invariant (\image(f))$. Indeed, because $\image(g\circ f)\hookrightarrow \image(g)$ is a  monomorphism, we have
$\invariant (\image(g\circ f))\leq \invariant (\image(g))$; because $\image(f)\twoheadrightarrow \image(g\circ f)$ is a regular epimorphism, we have
$\invariant (\image(g\circ f))\leq \invariant (\image(f))$. 
\end{example}

\begin{example}[Rank invariant, {\citep[Defn.~11]{carlsson2007theory}}]
\label{ex:pers-rank}
Recall that $\Vect$ is the category of vector spaces over the field $\field$ whose morphisms are $\field$-linear maps. The dimension invariant $\dimension:\ob(\Vect)\to\N$, that assigns to each vector space its dimension, is an example of an $\N$-valued epi-mono invariant. According to Ex.~\ref{ex:surjective-injective-inv}, for any $\Ffunc:(\bbR,\leq)\to \Vect$, $\dimension$ gives rise to a persistent invariant such that $\dimension(\Ffunc):[a,b]\mapsto\dimension(\image(f_a^b))=\rank(f_a^b)$, which coincides with the well-known \textbf{rank invariant} 
\citep[Defn.~11]{carlsson2007theory}.
\end{example}

\subsubsection{Comparison to related notions of invariants}
\label{sec:comparison-long}

The notion of categorical invariant in Defn.~\ref{def:categorical invariant} can be seen as a generalization of both that of an epi-mono invariant as in Ex.~\ref{ex:surjective-injective-inv} and of related notions that have been considered  in the TDA literature. Below we provide the details. 
\begin{itemize}
\item For $\catC$ an abelian category (and thus a regular category in particular) then (a) the notion of \emph{epi-mono-respecting pre-orders on $\catC$} introduced by Puuska \citep[Defn.~3.2]{puuska2017erosion} is equivalent to (b) 
    the restriction of our notion of epi-mono invariant to abelian categories (and thus a special case of a categorical invariant), as follows. 
    \smallskip
    
    (b)$\Rightarrow$(a): Given any epi-mono invariant $\invariant:\ob(\catC)\to\catN$ on \textit{an abelian category} $\catC$ where $\catN$ is a poset, we can define a pre-order $\leq_\invariant$ on $\ob(\catC)$, induced by the invariant, given by: $X\leq_\invariant Y \Leftrightarrow \invariant(X)\leq \invariant(Y)$. 
    By the definition of an epi-mono invariant, $\invariant$ is non-decreasing on monomorphisms, and non-increasing on regular epimorphisms. 
    This implies that the pre-order $\leq_\invariant$ is epi-mono-respecting in the sense of Puuska. 
    \smallskip
    
    (a)$\Rightarrow$(b): Suppose that we have an epi-mono-respecting
        pre-order $\leq$ on $\ob(\catC)$ in the sense of Puuska. Then the ``1-skeleton" of that pre-order (viewed as a category whose objects are the equivalence classes associated with the equivalence $x\simeq y\Leftrightarrow \big(x\leq y$ and $y\leq x\big)$) will be a poset which we denote by $\catN:=(\ob(\catC)/\!\!\simeq,\leq)$. Then,  we obtain 
 the persistent invariant $\invariant^{\leq}:\ob(\catC)\to\catN$, $X\mapsto [X]_{\simeq}$.
   One can check that these two constructions ($\invariant\mapsto\leq_\invariant$ and $\leq\mapsto \invariant^{\leq}$) are inverses of each other, i.e. they induce a bijection.
   \medskip
   
     \item For $\catC$ any category, the notion of \emph{categorical persistence function} of Bergomi et al.~\citep[Defn.~3.2]{bergomi2019rank} is a lower bounded function $p:\mor(C)\to\N$ such that for any $X\xrightarrow{f}Y\xrightarrow{g}Z\xrightarrow{h}W$ in $\catC$: (i) $p(g\circ f)\leq p(g)$ and $p(h\circ g\circ f)\leq p(h\circ g)$, and (ii) $p(g)-p(g\circ f)\geq p(h\circ g)-p(h\circ g\circ f)$. The first condition is equivalent to our notion of a categorical invariant 
     (we consider that the categorical persistence function is defined on each object $X$ in $\catC$ as $p(X):=p(\mathbf{id}_X)$). The second condition is actually equivalent to the positivity of the persistence diagram (yielded as the M\"obius inversion as in Defn.~\ref{def:I-dgm}) of the categorical persistence function. However, our notion of categorical invariant in Defn.~\ref{def:categorical invariant} does not assume such positivity conditions, e.g.~both the persistent cup-length and persistent LS-category invariants sometimes can have negative persistence diagrams (see Ex.~\ref{ex:non-example}). 
     \medskip
     
   \item  
    For $\catC$ a regular category, the categorical invariant induced by an epi-mono invariant by definition (see Ex.~\ref{ex:surjective-injective-inv}) invokes  images of morphisms whereas general categorical invariants do not have to, making it a special case of a categorical invariant.
   For example: the LS-category invariant $\cat$ of a map is in general not equal to $\cat$ of the image of the map and $\cat$ is not epi-mono; see Rmk.~\ref{rmk:cat not epi-mono}.
    This illustrates that the notion of a categorical invariant is different and does not follow from the work of Puuska (i.e.~epi-mono-respecting pre-orders on abelian categories). 
   \smallskip
   
    For $\catC$ a regular category, the notion of \emph{rank function} of Bergomi et al.~\citep[Defn.~2.1]{bergomi2019rank} is an epi-mono invariant (as in  Ex.~\ref{ex:surjective-injective-inv}) that satisfies a positivity condition for the persistence diagram induced by the rank function (as in the case of the categorical persistence function). 
   \smallskip
   
    For $\catC$ an abelian category (and thus a regular category in particular) then the notion of an \emph{amplitude on $\catC$} introduced by Giunti et al.~\citep[Defn.~3.1]{giunti2021amplitudes} coincides with  an epi-mono invariant $\alpha:\ob(\catC)\to[0,\infty)$ satisfying the additional conditions that $\alpha(0)=0$ and that for any short exact sequence $0\to A\to B\to C\to 0$, $\alpha(B)\leq\alpha(A)+\alpha(C)$. 

\end{itemize}  
To summarize, our notion of a categorical invariant is a strict generalization of several concepts introduced in the TDA literature. 
In particular, the persistent LS-category invariant cannot be realized as an invariant of the above types.\\

In the remaining part of Sec.~\ref{sec:pers-inv}, we will concentrate on two other $\N$-valued categorical invariants and will omit the term `$\N$-valued' for conciseness.

In Sec.~\ref{sec:Persistent-cup-length}, we consider the \emph{cup-length}, a categorical invariant of topological spaces, which arises from the cohomology ring structure. Recall that the cohomology functor is contravariant. In general, a contravariant functor from $\catC$ to $\catN$ is equivalent to a covariant functor from the opposite category $\catC^{\mathrm{op}}$ of $\catC$ to $\catN$. 
It is clear that any categorical invariant $\invariant :\ob(\catC)\to\N$ of $\catC$ is also a categorical invariant in the opposite category $\catC^{\mathrm{op}}$ of $\catC$.

Later in Sec.~\ref{sec:ls-cat}, we study the persistent invariant arising from the LS-category of topological spaces, which admits the persistent cup-length invariant as a pointwise lower bound.

\subsection{Persistent cup-length invariant}
\label{sec:Persistent-cup-length}

In the standard setting of persistent homology, one considers a \textit{filtration} of spaces, i.e.~a collection of spaces $\Xfunc=\{ X_t\}_{t\in\bbR}$ such that $ X_t\subseteq  X_s$ for all $t\leq s$, and studies its \textit{$p$-th persistent homology} for any given dimension $p$. Persistent homology is defined as the functor $\bfH_p(\Xfunc):(\bbR,\leq)\to\Vect$ which sends each $t$ to the $p$-th homology $\bfH_p( X_t)$ of $ X_t$; see \citep{edelsbrunner2008Persistent,carlsson2009topology}. 
The barcode of the $p$-th persistent homology $\bfH_p(\Xfunc)$, also called 
\textit{the $p$-th barcode of $\Xfunc$}, encodes the lifespans of the degree-$p$ holes ($p$-cycles that are not $p$-boundaries) in $\Xfunc$. 
The \emph{$p$-th persistent cohomology $\bfH^p(\Xfunc)$} and its corresponding barcode $\barc\!\left(\bfH^p(\Xfunc)\right)$ are defined dually. Although persistent homology and persistent cohomology have the same barcode \citep[Prop.~2.3]{de2011dualities}, this paper mostly concerns cohomology so we will use the latter notion.
We call the barcode $\barc\!\left(\bfH^*(\Xfunc)\right)$ of $\bfH^*(\Xfunc)$, which is
the disjoint union $\sqcup_{p\in \N}\barc\!\left(\bfH^p(\Xfunc)\right)$, \emph{the total barcode of $\Xfunc$}. 

In Sec.~\ref{sec:cup and ring} we recall the notion of the cup product of cocycles, together with the notion and properties of the cup-length invariant of cohomology rings.  In Sec.~\ref{sec:cup_l_func} we lift the cup-length invariant to a persistent invariant, called the persistent cup-length invariant, and  examine some examples that highlight its strength.

Persistent cup-length invariant sometimes captures more  information than persistent (co)homology, cf. \citep[Ex.~13]{contessoto_et_al:LIPIcs.SoCG.2022.31}.  However, cup-length is not a complete invariant of graded rings. For instance, the spaces $\bbT^2 \vee \bbT^2$ and $\bbS^1 \vee \bbS^2 \vee \bbS^1 \vee \bbT^2$ have different ring structures, but have the same cup-length. 
For the purpose of extracting even more information from the cohomology ring structure, in Sec.~\ref{sec:cup module} we will study the (persistent) $\ell$-fold product of the persistent cohomology algebra, which provide a strengthening of the notion of cup-length.

\subsubsection{Cohomology ring and cup-length} 
\label{sec:cup and ring}

We recall the cup product operation in the setting of simplicial cohomology. 
Let $ X$ be a simplicial complex with an ordered vertex set $\{x_1<\dots<x_n\}$. 
For any non-negative integer $p$, we denote a $p$-simplex by $\alpha:=[\alpha_0,\dots,\alpha_{p}]$ where $\alpha_0<\dots<\alpha_{p}$ are ordered vertices in $ X$, and by $\alpha^*:C_p(  X)\to \field$, the dual of $\alpha$. Here $\field$ is the base field as before.
Let $\beta:=[\beta_0,\dots,\beta_{q}]$ be a $q$-simplex for some non-negative integer $q$. The \emph{cup product} $\alpha^*\smile \beta^*$ is defined as the linear map $C_{p+q}(  X)\to  \field$ such that for any $ (p+q)$-simplex $\tau=[\tau_{0},\dots,\tau_{p+q}]$, 
\[\alpha^{*}\smile\beta^{*}(\tau):=\alpha^*([\tau_{0},\dots,\tau_{p}])\cdot \beta^*([\tau_{p},\dots,\tau_{p+q}]).\] 
Equivalently, 
we have that 
$\alpha^{*}\smile\beta^{*}$ is $[\alpha_0,\dots,\alpha_{p},\beta_1,\dots,\beta_{q}]^*$
if $\alpha_{p}=\beta_0$, and $0$ otherwise.
By a \emph{$p$-cochain} we mean a finite linear sum $\sigma=\sum_{j=1}^h \lambda_j \alpha^{j*}$, where each $\alpha^j$ is a $p$-simplex in $ X$ and $\lambda_j\in \field$. The \emph{cup product} of a $p$-cochain $\sigma=\sum_{j=1}^h \lambda_j \alpha^{j*}$ and a $q$-cochain $\sigma'=\sum_{j'=1}^{h'}\mu_{j'}\beta^{j'*}$ is defined as $$\sigma\smile\sigma':=\sum_{j,j'}\lambda_j\mu_{j'}\left(\alpha^{j*}\smile \beta^{j'*}\right).$$

For a given space $ X$, the cup product yields a bilinear map $\smile:\bfH^p( X) \times\bfH^q( X)\to\bfH^{p+q}( X)$ of vector spaces.
In particular, it turns the total cohomology vector space $\bfH^*( X):=\bigoplus_{p\in\N}\bfH^p( X) $ into a graded ring $(\bfH^*( X),+,\smile)$.
The \textit{cohomology ring map} $ X\mapsto \bfH^*( X)$ defines a contravariant functor from the category of spaces, $\Top$, to the category of graded rings, $\Ring$
(see \citep[Sec.~3.2]{hatcher2000}).

\begin{definition}
A ring $(R,+,\bullet)$ is called a \textbf{graded ring} if there exists a family of subgroups $\lbrace R_p\rbrace _{p\in  \N}$ of $R$ such that $R=\bigoplus_{p\in  \N} R_p$ (as abelian groups), and $R_a\bullet R_b\subseteq R_{a+b}$ for all $a,b\in \N$.
Let $R$ and $S$ be two graded rings. 
A ring homomorphism $\varphi:R\to  S$ is called a \textbf{graded homomorphism} if it preserves the grading, i.e.~$\varphi(R_p)\subseteq S_p$, for all $p\in \N$. 
\end{definition}

To avoid the difficulty of describing and comparing ring structures (in a computer), we study a computable invariant of the graded cohomology ring, called the \textit{cup-length}. 

\begin{definition}\label{def:cup-length}
The \textbf{length} of a graded ring $R$ is the largest non-negative integer $\ell$ such that there exist homogeneous elements $\eta_1,\dots,\eta_\ell\in R$ with nonzero degrees (i.e. $\eta_1,\dots,\eta_\ell\in \bigcup_{p\geq 1} R_p$), such that $\eta_1 \bullet \dots \bullet \eta_\ell \neq 0$. 
If $\bigcup_{p\geq 1} R_p=\emptyset$, then we declare that the length of $R$ is zero.
We denote the length of a graded ring $R$ by $\len(R)$.
The map 
\[\len:\ob(\Ring)\to\N,\text{ with }R\mapsto \len(R)\] is called the \textbf{length invariant}.

When $R=(\bfH^*( X),+,\smile)$ for some space $ X$,
we denote $\cupprod( X):=\len(\bfH^*( X))$ and call it the \textbf{cup-length of $ X$}.
The map 
\[\cupprod:\ob(\Top)\to\N, \text{ with }X\mapsto \cupprod( X)\] is called the \textbf{cup-length invariant}.
\end{definition}

Here are some properties of the (cup-)length invariant that we will use.

\begin{proposition} \label{prop:len_of_basis}
Let $R$ be a graded ring. Suppose $B=\bigcup_{p\geq 1} B_p$, where each $B_p$ generates $R_p$ under addition. Then $\len(R)=\sup\left\{\ell\geq 1\mid B^{ \ell}\neq \{0\}\right\}.$ In the case of cohomology ring, let $B_p$ be a linear basis for $\bfH^p( X) $ for each $p\geq 1$, and let $B:=\bigcup_{p\geq 1} B_p$. Then, $\cupprod( X)=\sup\left\{\ell\geq 1\mid B^{ \ell}\neq \{0\}\right\}$. 
\end{proposition}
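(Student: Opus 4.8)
The plan is to prove the identity by establishing the two inequalities
\[
\sup\left\{\ell\geq 1\mid B^{\ell}\neq\{0\}\right\}\leq \len(R)
\qquad\text{and}\qquad
\len(R)\leq \sup\left\{\ell\geq 1\mid B^{\ell}\neq\{0\}\right\}
\]
separately, the whole argument resting on the multilinearity (distributivity) of the ring product. Throughout I write $B^{\ell}$ for the set of $\ell$-fold products $\left\{b_1\bullet\cdots\bullet b_\ell\mid b_1,\dots,b_\ell\in B\right\}$, and I adopt the convention that the supremum of the empty set is $0$, so that the formula reproduces $\len(R)=0$ precisely when no positive-degree product is nonzero.

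For the first inequality, I would observe that every element of $B=\bigcup_{p\geq 1}B_p$ is homogeneous of strictly positive degree by construction. Hence if $B^{\ell}\neq\{0\}$ for some $\ell\geq 1$, there exist $b_1,\dots,b_\ell\in B$ with $b_1\bullet\cdots\bullet b_\ell\neq 0$; these $\ell$ elements are exactly the kind of homogeneous, positive-degree elements appearing in Defn.~\ref{def:cup-length}, so they witness $\len(R)\geq\ell$. Taking the supremum over all such $\ell$ gives the first inequality.

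For the reverse inequality, let $\ell\geq 1$ be any integer for which there exist homogeneous elements $\eta_1,\dots,\eta_\ell$ of positive degrees $p_1,\dots,p_\ell$ with $\eta_1\bullet\cdots\bullet\eta_\ell\neq 0$ (i.e.~one of the configurations defining $\len(R)$). Since $B_{p_i}$ generates $R_{p_i}$ under addition, each $\eta_i$ is a finite sum of elements of $B_{p_i}$, say $\eta_i=\sum_j b_{ij}$ with $b_{ij}\in B_{p_i}$. Expanding by distributivity yields
\[
\eta_1\bullet\cdots\bullet\eta_\ell=\sum_{j_1,\dots,j_\ell} b_{1j_1}\bullet\cdots\bullet b_{\ell j_\ell},
\]
a finite sum of $\ell$-fold products drawn from $B$. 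Because the left-hand side is nonzero, at least one summand $b_{1j_1}\bullet\cdots\bullet b_{\ell j_\ell}$ is nonzero, so $B^{\ell}\neq\{0\}$. As $\len(R)$ is the supremum of all such $\ell$, this gives $\len(R)\leq\sup\left\{\ell\geq 1\mid B^{\ell}\neq\{0\}\right\}$, and combining the two inequalities proves the general claim. (If instead $B_p$ is assumed to generate $R_p$ as a $\field$-vector space, the same expansion holds with scalar coefficients and the argument is unchanged.)

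Finally, the cohomology statement follows by specializing $R=\bfH^*(X)$ and $\bullet=\smile$, with $B_p$ a linear basis of $\bfH^p(X)$: a basis generates under addition, and the cup product is bilinear (hence distributes over the expansion above), so the general result applies verbatim to give $\cupprod(X)=\len(\bfH^*(X))=\sup\left\{\ell\geq 1\mid B^{\ell}\neq\{0\}\right\}$. I expect no genuine obstacle here; the proof is essentially a bookkeeping argument built on multilinearity. The only points requiring care are ensuring that every factor retains positive degree---which is automatic since $B\subseteq\bigcup_{p\geq 1}R_p$---and fixing the convention $\sup\emptyset=0$ so that the formula also covers the degenerate case $\len(R)=0$.
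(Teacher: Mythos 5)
Your proposal is correct and follows essentially the same route as the paper: both arguments reduce to the observation that $B\subseteq\bigcup_{p\geq1}R_p$ gives one direction trivially, while the other direction follows by writing each homogeneous $\eta_i$ as a sum of elements of $B_{p_i}$, expanding the product by distributivity, and noting that a nonzero sum must have a nonzero summand. The only cosmetic difference is that you phrase the argument as two inequalities between suprema, whereas the paper phrases it as the equivalence $\bigl(\bigcup_{p\geq1}R_p\bigr)^{\ell}\neq\{0\}\iff B^{\ell}\neq\{0\}$.
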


\begin{proof} 
It follows from the definition that $\len(R)=\sup\big\{\ell\geq 1\mid(\bigcup_{p\geq 1} R_p )^\ell\neq \{0\}\big\}.$ We claim that $\left(\bigcup_{p\geq 1} R_p \right)^\ell\neq \{0\}$ iff $B^{ \ell}\neq \{0\}$. Indeed, whenever $\eta_1\bullet\dots\bullet\eta_\ell\neq 0$, where each $\eta_i\in \bigcup_{p\geq 1} R_p$, we can write every $\eta_i$ as a linear sum of elements in $B$. Thus, $\eta$ can be written as a linear sum of elements in the form of $r_1\bullet\dots\bullet r_\ell$, where each $r_j\in B$. Because $\eta\neq 0$, there must be a summand $r_1\bullet\dots\bullet r_\ell\neq 0$. Therefore, $B^{ \ell}\neq \{0\}$. 
\end{proof}

\subsubsection{Persistent cohomology ring and persistent cup-length invariant}\label{sec:cup_l_func}

We study the persistent cohomology ring of a filtration and 
the associated notion of persistent cup-length invariant.
We examine several examples of this persistent invariant and establish a way to visualize it in the half-plane above the diagonal.

A functor $\Rfunc:(\bbR,\leq)\to\Ring$ is called a \textbf{persistent graded ring}. 
Recall the contravariant cohomology ring functor $\bfH^*:\Top\to\Ring$. 
Given a persistent space $\Xfunc:(\bbR,\leq)\to\Top$, 
the composition $\bfH^*(\Xfunc) :(\bbR,\leq)\to\Ring$ is called the \textbf{persistent cohomology ring of $\Xfunc$}. Due to the contravariance of $\bfH^*$, we consider only contravariant persistent graded rings in this paper.

By \citep[Prop.~38]{contessoto2021persistent}, the length of graded rings is an epi-mono invariant\footnote{The original proof was done for surjective ring homomorphisms and injective ring homomorphisms, but in the category of rings these morphisms can be identified with regular ring epimorphisms and ring monomorphisms, respectively.} and thus a categorical invariant, so for any persistent graded ring $\Rfunc$, $\len(\Rfunc)$ defines a functor from $(\Int,\subseteq)$ to $(\N,\leq)^{\mathrm{op}}$. We lift the length invariant to a persistent invariant as:

\begin{definition}\label{def:cup_l_func}
Given a persistent graded ring $\Rfunc:(\bbR,\leq)\to\Ring^{\mathrm{op}}$
we define the \textbf{persistent length invariant of $\Rfunc$} as the functor
\[\len(\Rfunc):(\Int,\subseteq)\to(\N,\leq)^{\mathrm{op}} \text{ with } \linterval  t,s \rinterval \mapsto\len\left(\image( R_s\to  R_t) \right).\] 

If $\Rfunc=\bfH^*(\Xfunc) $ is the persistent cohomology ring of a given persistent space $\Xfunc:(\bbR,\leq)\to\Top$,
then we will call the functor 
\[\len(\bfH^*(\Xfunc) ):(\Int,\subseteq)\to(\N,\leq)^{\mathrm{op}} \text{ with }\linterval  t,s \rinterval \mapsto\len\left(\image(\bfH^*( X_s)\to \bfH^*( X_t))\right),\] the \textbf{persistent cup-length invariant of $\Xfunc$}, and we will denote it by $\cupprod (\Xfunc):(\Int,\subseteq)\to(\N,\leq)^{\mathrm{op}}$.
\end{definition}

Prop.~\ref{prop:generators-of-ring} below allows us to compute the cohomology images of a persistent cohomology ring from representative cocycles (see \citep[Defn. 3]{contessoto_et_al:LIPIcs.SoCG.2022.31}), which is applied to establish Thm.~1 of \citep{contessoto_et_al:LIPIcs.SoCG.2022.31} and to compute persistent cup-length invariants. 
Prop.~\ref{prop:prod-coprod-pers-cup} allows us to simplify the calculation of persistent cup-length invariants in certain cases, such as the Vietoris-Rips filtration of products or wedge sums of metric spaces, e.g. Ex.~\ref{ex:S1-Tn}. 

\begin{proposition}
\label{prop:generators-of-ring}
Let $\Xfunc=\{ X_t\}_{t\in\bbR}$ be a filtration, together with a family of representative cocycles $\bsigma=\{\sigma_I\}_{I\in \barc\!\left(\bfH^*(\Xfunc)\right)}$ for $\bfH^*(\Xfunc) $. Let $t\leq s$ in $\bbR$. Then
$\image(\bfH^*( X_s) \to \bfH^*( X_t) )=\langle  [\sigma_I]_t: [t,s]\subseteq I\in\barc\!\left(\bfH^*(\Xfunc)\right) \rangle ,$
generated as a graded ring. 
\end{proposition}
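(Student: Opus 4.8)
The plan is to reduce the graded-ring statement to a degree-wise vector-space computation and then invoke the structure of interval modules. The key observation is that $\image(\bfH^*(X_s)\to\bfH^*(X_t))$ is the image of a graded ring homomorphism, hence a graded subring $R$ of $\bfH^*(X_t)$ whose degree-$p$ component is exactly $\image(\bfH^p(X_s)\to\bfH^p(X_t))$. Since a graded subring is generated by its homogeneous elements and is closed under linear combinations and cup products, it suffices to prove, for every $p\geq 0$ and every $t\leq s$, the vector-space identity
$$\image\!\left(\bfH^p(X_s)\to\bfH^p(X_t)\right)=\operatorname{span}\{[\sigma_I]_t : [t,s]\subseteq I,\ \deg I=p\}.$$
Indeed, the inclusion $\subseteq$ of the proposition would then follow because every homogeneous element of $R$ lies in the span of the listed classes, which is contained in the subring they generate; and the inclusion $\supseteq$ would follow because each $[\sigma_I]_t$ with $[t,s]\subseteq I$ is the image of $[\sigma_I]_s$ under the restriction map (both $t,s\in I$), and $R$ is closed under cup products.

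To establish the displayed vector-space identity I would use the interval decomposition of the persistent cohomology. Since a family of representative cocycles $\bsigma$ is part of the given data, $\bfH^p(\Xfunc)$ decomposes as $\bigoplus_{I}\field[I]$, where $I$ ranges over the degree-$p$ bars and each $\sigma_I$ realizes the summand $\field[I]$; this is precisely the property that for each $t$ the classes $\{[\sigma_I]_t : t\in I\}$ form a basis of $\bfH^p(X_t)$ compatible with the transition maps, i.e.\ the restriction $\bfH^p(X_s)\to\bfH^p(X_t)$ sends $[\sigma_I]_s\mapsto[\sigma_I]_t$ whenever $t,s\in I$. The computation then reduces to the transition map of a single interval module $\field[I]$, which is the identity when $[t,s]\subseteq I$ and the zero map otherwise. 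Summing over the decomposition shows that the image of the restriction at level $t$ is spanned precisely by those $[\sigma_I]_t$ for which $[t,s]\subseteq I$, as desired.

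I expect the main obstacle to be bookkeeping rather than conceptual. One must keep track of the contravariance of cohomology, so that for $t\leq s$ the relevant arrow runs $\bfH^*(X_s)\to\bfH^*(X_t)$ and the condition ``$I$ contributes at $t$'' means $t\in I$; and one must use the defining property of representative cocycles carefully to guarantee both that the listed classes lie in the image and that nothing else does. A secondary point requiring care is the passage from the degree-wise spanning statement to the graded-ring statement: the subring generated by the classes $\{[\sigma_I]_t\}$ contains products that land in higher degrees, but these cause no trouble, since in each fixed degree the image already coincides with the span of the degree-$p$ generators, and $R$, being a subring, automatically contains all such products.
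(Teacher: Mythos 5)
Your proof is correct and takes essentially the same route as the paper's: both reduce to the fact that the representative cocycles give transition-compatible bases, so that $[\sigma_I]_s\mapsto[\sigma_I]_t$ is nonzero exactly when $[t,s]\subseteq I$. The only difference is cosmetic --- you compute the image degree-wise as a linear span and then pass to ring generation, while the paper pushes a ring-generating linear basis of $\bfH^*(X_s)$ through the homomorphism; your version incidentally yields the slightly stronger conclusion that the listed classes linearly span the image in each degree.
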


\begin{proof}
First, let us recall the following: Given a space $ X$, the cohomology ring $\bfH^*( X)\in \Ring$ is a graded ring generated by the graded cohomology vector space $\bfH^*( X)\in \Vect$, under the operation of cup products. It is clear that any linear basis of $\bfH^*( X)$ also generates the ring $\bfH^*( X)$, under the cup product. Given an inclusion of spaces $ X\xhookrightarrow{\iota} Y$, let $f:\bfH^*( Y) \to \bfH^*( X)$ denotes the induced cohomology ring morphism. Let $A$ be a linear basis for $\bfH^*( Y) $. Since $A$ also generates $\bfH^*( Y) $ as a ring, the image $f(A)$ generates $f(\bfH^*( Y) )$ as a ring. 

Now, let $\bfH^*(\iota_t^s ):\bfH^*( X_s) \to \bfH^*( X_t) $ denote the cohomology map induced by the inclusion $\iota_t^s: X_t\hookrightarrow  X_s$. Notice that the set $A:=\{[\sigma_I]_s: s\in I\in\barc\!\left(\bfH^*(\Xfunc)\right)\}$ forms a linear basis for $\bfH^*( X_s) $, and thus $\bfH^*(\iota_t^s )(A)$ generates $\image(\bfH^*(\iota_t^s ))$ as a ring. On the other hand, for each representative cocycle and any $t\leq s$, $\bfH^*(\iota_t^s )([\sigma_I]_s)=\left[\sigma_I\vert_{C_p( X_t)}\right]\neq 0\iff [t,s]\subseteq I$.
It follows that 
\begin{align*}
    \bfH^*(\iota_t^s )(A)&=\left\{\bfH^*(\iota_t^s  )\left([\sigma_I]_s\right): [t,s]\subseteq I\in\barc\!\left(\bfH^*(\Xfunc)\right)\right\}\\
    &=\left\{[\sigma_I]_t: [t,s]\subseteq I\in\barc\!\left(\bfH^*(\Xfunc)\right)\right\}.
\end{align*}
\end{proof}

\begin{proposition}
\label{prop:prod-coprod-pers-cup}
Let $\Xfunc,\Yfunc:(\bbR,\leq)\to\Top$ be two persistent spaces. Then:
\begin{itemize}
    \item $\cupprod\left(\Xfunc\times\Yfunc\right)=\cupprod(\Xfunc)+\cupprod(\Yfunc),\text{  }$
    \item $\cupprod\left(\Xfunc\amalg\Yfunc\right)=\max\{\cupprod(\Xfunc),\cupprod(\Yfunc)\},\text{ and }$
    \item $\cupprod\left(\Xfunc\vee\Yfunc\right)=\max\{\cupprod(\Xfunc),\cupprod(\Yfunc)\}$.
\end{itemize}
Here $\times,\amalg$ and $\vee$ denote point-wise product, disjoint union, and wedge sum, respectively. For the first item, we additionally require the spaces in $\Xfunc$ and $\Yfunc$ to have torsion-free cohomology rings.
\end{proposition}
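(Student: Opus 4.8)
The plan is to reduce each of the three persistent identities to a pointwise statement on every interval $[t,s]$, combined with the naturality of the relevant cohomology decomposition. Since $\cupprod(\Xfunc)([t,s])=\len\left(\image(\bfH^*(X_s)\to\bfH^*(X_t))\right)$ by Defn.~\ref{def:cup_l_func}, and each of $\times,\amalg,\vee$ is applied pointwise to the persistent spaces, it suffices to produce, for each operation, a graded-ring isomorphism of the image rings that is compatible with the transition maps, and then to evaluate $\len$ on the resulting algebraic construction. Throughout I would use the characterization $\len(R)=\sup\{\ell\geq 1\mid (R^{+})^{\ell}\neq\{0\}\}$ coming from Prop.~\ref{prop:len_of_basis}, where $R^{+}:=\bigoplus_{p\geq 1}R_p$; this lets me reason about the nilpotency of the augmentation ideal rather than about individual products.

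For the product, I would invoke the K\"unneth theorem: under the torsion-free (finite-type) hypothesis and field coefficients, the cohomology cross product gives a natural graded-ring isomorphism $\bfH^*(X_t\times Y_t)\cong \bfH^*(X_t)\otimes\bfH^*(Y_t)$. Because the transition map of $\Xfunc\times\Yfunc$ on $[t,s]$ is a product of the two transition maps and $\bfH^*$ carries it to the tensor product of the induced maps, and because $\image(f\otimes g)=\image(f)\otimes\image(g)$ over a field, I obtain a graded-ring isomorphism $\image\left(\bfH^*((X\times Y)_s)\to\bfH^*((X\times Y)_t)\right)\cong A\otimes B$, where $A,B$ are the corresponding image rings of $\Xfunc,\Yfunc$. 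It then remains to prove the algebraic lemma $\len(A\otimes B)=\len(A)+\len(B)$. Setting $I:=(A\otimes B)^{+}$, one computes $I^{\ell}=\sum_{k}(A^{+})^{k}\otimes (B^{+})^{\ell-k}$, and each summand is nonzero precisely when $k\leq\len(A)$ and $\ell-k\leq\len(B)$; hence $I^{\ell}\neq\{0\}$ iff $\ell\leq\len(A)+\len(B)$. Applying this on every interval yields the first identity.

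For the disjoint union and the wedge sum, the decompositions are simpler and require no torsion hypothesis. For $\amalg$ I would use the natural product-ring isomorphism $\bfH^*(X_t\amalg Y_t)\cong\bfH^*(X_t)\times\bfH^*(Y_t)$, and for $\vee$ (with nondegenerate basepoints) the natural splitting $\bfH^{+}(X_t\vee Y_t)\cong \bfH^{+}(X_t)\oplus\bfH^{+}(Y_t)$ in positive degrees, in which all cup products between the two summands vanish. In both cases the image ring decomposes compatibly with the transition maps, so writing a positive-degree homogeneous element as $\xi+\zeta$ with $\xi$ in the first summand and $\zeta$ in the second, the mixed terms of any product vanish and $\eta_1\cdots\eta_\ell=\xi_1\cdots\xi_\ell+\zeta_1\cdots\zeta_\ell$; this is nonzero iff it is nonzero within one summand, giving $\len=\max$. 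Since $\len$ depends only on positive degrees, the difference between $\amalg$ and $\vee$ in degree $0$ is irrelevant, and the two $\max$-identities follow pointwise over all intervals.

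The main obstacle will be the product case, specifically the upper bound $\len(A\otimes B)\leq\len(A)+\len(B)$: a general positive-degree homogeneous element of $A\otimes B$ is a sum of simple tensors, so one cannot argue factor-by-factor on individual elements. The ideal/nilpotency computation above circumvents this, but it must be carried out in the \emph{graded} tensor product, where multiplication introduces Koszul signs; these signs are units and therefore affect neither the vanishing of $I^{\ell}$ nor the value of $\len$. A secondary subtlety is verifying that the K\"unneth map is a morphism of graded rings and is natural in both variables, which is exactly where the torsion-free/finite-type hypothesis is used.
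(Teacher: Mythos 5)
Your proposal follows essentially the same route as the paper's proof: reduce to a pointwise statement on each interval $[a,b]$, apply the K\"unneth ring isomorphism (resp.\ the product-ring and wedge-sum splittings) compatibly with the transition maps, and evaluate $\len$ on the resulting tensor product (resp.\ product) of image rings. The only difference is one of detail: the paper asserts the identities $\len(A\otimes B)=\len(A)+\len(B)$ and $\len(A\times B)=\max\{\len(A),\len(B)\}$ without further justification, whereas you supply the augmentation-ideal computation and the check that $\image(f\otimes g)=\image(f)\otimes\image(g)$ that make these steps rigorous.
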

\begin{proof}
By functoriality of products , disjoint unions, and wedge sums, we can define the persistent spaces: $\Xfunc\times\Yfunc:=(\{ X_t\times  Y_t\}_{t\in\bbR},\{f_{t}^{s}\times g_{t}^{s}\})$, $\Xfunc\amalg\Yfunc:=(\{ X_t\amalg  Y_t\}_{t\in\bbR},\{f_{t}^{s}\amalg g_{t}^{s}\})$, and $\Xfunc\vee\Yfunc:=(\{ X_t\vee  Y_t\}_{t\in\bbR},\{f_{t}^{s}\vee g_{t}^{s}\})$. Let $[a,b]$ be any interval in $\Int$. Utilizing the contravariance property of the cohomology ring functor $\bfH^*$, we obtain:
\begin{align*}
    \cupprod\left(\Xfunc\times\Yfunc \right)([a,b])&=\len\left(\bfH^*(f_a^b\times g_{a}^{b})\right)\\
    &=\len\left(\bfH^*(f_a^b)\otimes \bfH^*(g_a^b)\right)\\
    &=\len\left(\bfH^*(f_{a}^{b})\right)+\len\left(\bfH^*(g_a^b)\right)\\
    &=\cupprod(\Xfunc)([a,b])+\cupprod(\Yfunc)([a,b]),
    \\
    \cupprod\left(\Xfunc\amalg \Yfunc\right)([a,b])&=\len\left(\bfH^*(f_{a}^{b}\amalg g_{a}^{b})\right)\\
    &=\len\left(\bfH^*(f_{a}^{b})\times \bfH^*(g_{a}^{b})\right)\\
    &=\max\left\{\len\left(\bfH^*(f_{a}^{b})\right),\len\left(\bfH^*(g_{a}^{b})\right)\right\}\\
    &=\max\left\{\cupprod(\Xfunc)([a,b]),\cupprod(\Yfunc)([a,b])\right\}\text{, and}
    \\
        \cupprod\left(\Xfunc\vee \Yfunc\right)([a,b])&=\len\left(\bfH^*(f_a^b\vee g_{a}^{b})\right)\\
    &=\len\left(\bfH^*(f_{a}^{b})\times \bfH^*(g_{a}^{b})\right)\\
    &=\max\left\{\len\left(\bfH^*(f_{a}^{b})\right),\len\left(\bfH^*(g_{a}^{b})\right)\right\}\\
    &=\max\left\{\cupprod(\Xfunc)([a,b]),\cupprod(\Yfunc)([a,b])\right\}.
\end{align*}
\end{proof}

\noindent\textbf{Visualization of persistent cup-length invariant.} Each interval $[a,b]$ in $\Int$ is visualized as a point $(a,b)$ in the half-plane above the diagonal (see Fig.~\ref{figure:interval-triangle}).
To visualize the persistent cup-length invariant of a filtration $\Xfunc$, we assign to each point $(a,b)$ the integer value $\cupprod(\Xfunc)([a,b])$, if it is positive. If $\cupprod(\Xfunc)([a,b])=0$ we do not assign any value. We present an example to demonstrate how persistent cup-length invariants are visualized in the upper-diagonal plane (see Fig.~\ref{fig:torus_cup_len_func_and_dgm}).

\begin{figure}[H]
\centering
\begin{tikzpicture}[scale=0.4]
    \begin{axis} [ 
    axis y line=middle, 
    axis x line=middle,
    ticks=none,
    xmin=-.5, xmax=3.5,
    ymin=-.5, ymax=3.5,]
    \addplot [mark=none] coordinates {(-.5,-.5) (3.5,3.5)};
    \addplot [mark=none,dashed] coordinates {(1,0) (1,1)};
    \addplot [mark=none,dashed] coordinates {(3,0) (3,3)};
    \addplot [thick,color=red!60!white,fill=red!50!white, 
                    fill opacity=0.45]coordinates {
            (1,3)
            (1,1)
            (3,3)  
            (1,3)};
    \node[mark=none] at (axis cs:1,-0.3){\LARGE $a$};
    \node[mark=none] at (axis cs:3,-0.3){\LARGE $b$};
    \node[mark=none] at (axis cs:1,3.3){\LARGE $(a,b)$};
    \end{axis}
    \end{tikzpicture}
 \caption{The interval $[a,b]$ in $\Int$ corresponds to the point $(a,b)$ in $\bbR^2$.  }
 \label{figure:interval-triangle}
\end{figure}

\begin{example} [$\bbS^1$ and $\bbT^d$: visualization of $\cupprod(\cdot)$] \label{ex:S1-Tn}
Let $\bbS^1$ be the geodesic circle with radius $1$, and consider the Vietoris-Rips filtration $\VR_\bullet(\bbS^1)$. In \citep{adamaszek2017vietoris}, the authors computed the homotopy types of Vietoris-Rips complexes of $\bbS^1$ at all scale parameters. Following from their results, the persistent graded ring $\bfH^*(\VR_\bullet(\bbS^1))$ is given by
\[
\bfH^*(\VR_r(\bbS^1))\cong 
      \begin{cases}
       \bfH^*(\bbS^{2l+1}) , &\parbox[t]{.5\textwidth}{ if $[a,b] \subseteq \left(\tfrac{l}{2l+1}2\pi ,\tfrac{l+1}{2l+3}2\pi \right)$, for some $ l=0,1,\dots$}\\
       0 , &\mbox{otherwise,}
     \end{cases}
\] 
where the map $\bfH^*(\VR_s(\bbS^1))\to \bfH^*(\VR_r(\bbS^1))$ is an isomorphism if $\tfrac{l}{2l+1}2\pi<r\leq s<\tfrac{l+1}{2l+3}2\pi$, and is $0$ otherwise. We compute the persistent cup-length invariant of $\VR_\bullet(\bbS^1)$ and obtain: for any $a\leq b$,
\begin{equation*}\label{eq:VR_S1}
  \cupprod(\VR_\bullet(\bbS^1))([a,b]) = 
      \begin{cases}
       1, &\parbox[t]{.5\textwidth}{ if $[a,b] \subseteq \left(\tfrac{l}{2l+1}2\pi ,\tfrac{l+1}{2l+3}2\pi \right)$, for some $ l=0,1,\dots$}\\
       0, &\mbox{ otherwise,}
      \end{cases}
\end{equation*}
which is equal to the rank of $\bfH^*(\VR_b(\bbS^1))\to \bfH^*(\VR_a(\bbS^1))$ (viewed as a linear map).

As an application of Prop.~\ref{prop:prod-coprod-pers-cup}, we also study the persistent cup-length invariant of the Vietoris-Rips filtration of the $d$-torus 
$\bbT^d :=\underbrace{\bbS^1\times \bbS^1\times\dots\times \bbS^1}_{d-\text{times}}$, for some integer $d\geq2$. Here $\bbT^d$ is the $\ell_\infty$-product of the $d$ unit geodesic circles. For any $[a,b]\in\Int$, by \citep[Prop.~10.2]{adamaszek2017vietoris} and Prop.~\ref{prop:prod-coprod-pers-cup} we have
\begin{align*}
    &\cupprod\left(\VR_\bullet(\bbT^d )\right)([a,b])\\
=&\cupprod\left(\VR_\bullet(\bbS^1)\times\dots\times\VR_\bullet(\bbS^1))\right)([a,b])\\
=&d\cdot \cupprod(\VR_\bullet(\bbS^1))([a,b]).
\end{align*}
We draw visualizations for both $\cupprod\left(\VR_\bullet(\bbS^1 )\right)$ and $\cupprod\left(\VR_\bullet(\bbT^d )\right)$ in Fig.~\ref{ex:fig_S1-Tn}.

\begin{figure}[H]
\centering
    \begin{tikzpicture}[scale=0.6]
    \begin{axis} [ 
    axis y line=middle, 
    axis x line=middle,
    ytick={0.5,1},
    yticklabels={$\tfrac{\pi}{2}$,$\pi$},
    xtick={0.5,1},
    xticklabels={$\tfrac{\pi}{2}$,$\pi$},
    xmin=0, xmax=1.1,
    ymin=0, ymax=1.1,]
    \addplot [mark=none,color=dgmcolor!20!white] coordinates {(0,0) (1,1)};
    \addplot [thick,color=dgmcolor!20!white,fill=dgmcolor!20!white, 
                    fill opacity=0.45]coordinates {
            (0,.67) 
            (0,0)
            (.67,.67)
            (0,.67)};
    \addplot [thick,color=dgmcolor!20!white,fill=dgmcolor!20!white, 
                    fill opacity=0.45]coordinates {
            (0.67,.8) 
            (.67,.67)
            (.8,.8)
            (0.67,.8)};
    \addplot [thick,color=dgmcolor!20!white,fill=dgmcolor!20!white, 
                    fill opacity=0.45]coordinates {
            (0.67,.8) 
            (.67,.67)
            (.8,.8)
            (0.67,.8)};
    \addplot [thick,color=dgmcolor!20!white,fill=dgmcolor!20!white, 
                    fill opacity=0.45]coordinates {
            (0.8,.857) 
            (.8,.8)
            (.857,.857)
            (0.8,.857)};
    \addplot [thick,color=dgmcolor!20!white,fill=dgmcolor!20!white, 
                    fill opacity=0.45]coordinates {
            (0.857,.89) 
            (.857,.857)
            (.89,.89)
            (0.857,.89) };
    \addplot [thick,color=dgmcolor!20!white,fill=dgmcolor!20!white, 
                    fill opacity=0.45]coordinates {
            (0.89,.95) 
            (.89,.89)
            (.95,.95)
            (0.89,.95) };
    \node[mark=none] at (axis cs:.74,.76){\tiny{\textsf{1}}};
    \node[mark=none] at (axis cs:.25,.45){\textsf{1}};
    \node[mark=none] at (axis cs:.7,.3){$\cupprod\left(\VR_\bullet(\bbS^1 )\right)$};
    \end{axis}
    \end{tikzpicture}
    \hspace{1.5cm}
    \begin{tikzpicture}[scale=0.6]
    \begin{axis} [ 
    axis y line=middle, 
    axis x line=middle,
    ytick={0.5,1},
    yticklabels={$\tfrac{\pi}{2}$,$\pi$},
    xtick={0.5,1},
    xticklabels={$\tfrac{\pi}{2}$,$\pi$},
    xmin=0, xmax=1.1,
    ymin=0, ymax=1.1,]
    \addplot [mark=none,color=dgmcolor!50!white] coordinates {(0,0) (1,1)};
    \addplot [thick,color=dgmcolor!50!white,fill=dgmcolor!50!white, 
                    fill opacity=0.45]coordinates {
            (0,.67) 
            (0,0)
            (.67,.67)
            (0,.67)};
    \addplot [thick,color=dgmcolor!50!white,fill=dgmcolor!50!white, 
                    fill opacity=0.45]coordinates {
            (0.67,.8) 
            (.67,.67)
            (.8,.8)
            (0.67,.8)};
    \addplot [thick,color=dgmcolor!50!white,fill=dgmcolor!50!white, 
                    fill opacity=0.45]coordinates {
            (0.67,.8) 
            (.67,.67)
            (.8,.8)
            (0.67,.8)};
    \addplot [thick,color=dgmcolor!50!white,fill=dgmcolor!50!white, 
                    fill opacity=0.45]coordinates {
            (0.8,.857) 
            (.8,.8)
            (.857,.857)
            (0.8,.857)};
    \addplot [thick,color=dgmcolor!50!white,fill=dgmcolor!50!white, 
                    fill opacity=0.45]coordinates {
            (0.857,.89) 
            (.857,.857)
            (.89,.89)
            (0.857,.89) };
    \addplot [thick,color=dgmcolor!50!white,fill=dgmcolor!50!white, 
                    fill opacity=0.45]coordinates {
            (0.89,.95) 
            (.89,.89)
            (.95,.95)
            (0.89,.95) };
    \node[mark=none] at (axis cs:.74,.76){\tiny{\textsf{d}}};
    \node[mark=none] at (axis cs:.25,.45){\textsf{d}};
    \node[mark=none] at (axis cs:.7,.3){$\cupprod\left(\VR_\bullet(\bbT^d )\right)$};
    \end{axis}
    \end{tikzpicture}
    \caption{Persistent cup-length invariants $\cupprod\left(\VR_\bullet(\bbS^1 )\right)$ and $\cupprod\left(\VR_\bullet(\bbT^d )\right)$. See Ex.~\ref{ex:S1-Tn}}
\label{ex:fig_S1-Tn}
\end{figure}
\end{example}

\subsubsection{Persistent cup-length diagram and computation of the persistent cup-length invariant}

In this section, we recall from \citep[Sec.~3]{contessoto_et_al:LIPIcs.SoCG.2022.31} the notion of the
\textit{persistent cup-length diagram} of a filtration, defined by using a family of representative cocycles, and recall that the persistent cup-length invariant can be retrieved from the persistent cup-length diagram (cf. Thm.~\ref{thm:tropical_mobius}).

\begin{definition} [Support of $\ell$-fold products]
\label{def:ell-product}
Let $\bsigma$ be a family of representative cocycles for $\bfH^*(\Xfunc) $. Let $\ell\in\N^+$ and let $I_1,\dots,I_\ell$ be a sequence of elements in $\barc\!\left(\bfH^*(\Xfunc)\right)$ with representative cocycles $\sigma_{I_1},\dots,\sigma_{I_\ell}\in\bsigma$, respectively. 
Consider the $\ell$-fold product $\sigma_{I_1} \smile\dots\smile \sigma_{I_\ell}$.
We define the \textbf{support} of $\sigma_{I_1} \smile\dots\smile \sigma_{I_\ell}$
to be  
\begin{equation}\label{eq:support}
\operatorname{supp}(\sigma_{I_1} \smile\dots\smile \sigma_{I_\ell}):= \left\{t\in \bbR\mid [ \sigma_{I_1}]_t \smile\dots\smile [\sigma_{I_\ell}]_t \neq [0]_t\right\}.
\end{equation}
\end{definition}

\begin{proposition}
\label{prop:support}
With the same assumption and notation in Defn.~\ref{def:ell-product}, let $I:=\operatorname{supp}(\sigma_{I_1} \smile\dots\smile \sigma_{I_\ell})$. If $I\neq \emptyset$, then $I$ is an interval $\linterval  b,d\rinterval $, where $b\leq d$ are such that $d$ is the right end of $\cap_{1\leq i\leq \ell}I_i$ and $b$ is the left end of some $I'\in \barc\!\left(\bfH^*(\Xfunc)\right)$ ($I'$ is not necessarily one of the $I_i$).
\end{proposition}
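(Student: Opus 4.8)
The plan is to establish three facts in sequence: that $I$ is contained in $J := \cap_{1\le i\le\ell} I_i$, that $I$ is an interval whose right end coincides with that of $J$, and finally that its left end is the birth time of some bar. For the containment, if $t \in I$ then the product $[\sigma_{I_1}]_t \smile\dots\smile [\sigma_{I_\ell}]_t$ is nonzero, so each factor $[\sigma_{I_i}]_t$ is nonzero; by the defining property of representative cocycles recalled in the proof of Prop.~\ref{prop:generators-of-ring}, this forces $t\in I_i$ for every $i$, hence $t\in J$. Write $d$ for the right end of $J$ (equivalently $d=\min_i d_i$, with $d_i$ the right end of $I_i$). Representing each $\sigma_{I_i}$ at the level $d_i\ge d$ of the right end of its bar, functoriality of cochain restriction yields $\bfH^*(\iota_t^s)([\sigma_{I_i}]_s)=[\sigma_{I_i}]_t$ for all $t\le s\le d$.

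Next I would prove the interval structure and pin down the right end. Because each $\bfH^*(\iota_t^s)$ is a graded-ring homomorphism it commutes with cup products, so for $t\le s\le d$,
\[
\bfH^*(\iota_t^s)\big([\sigma_{I_1}]_s \smile\dots\smile[\sigma_{I_\ell}]_s\big)=[\sigma_{I_1}]_t \smile\dots\smile[\sigma_{I_\ell}]_t .
\]
Consequently, if the product vanishes at $s$ it vanishes at every $t\le s$; contrapositively, $t\in I$ and $t\le s\le d$ imply $s\in I$. Together with $I\subseteq J\subseteq(-\infty,d]$ and $I\ne\emptyset$, this shows $I$ is an interval containing its supremum $d$, so the right end of $I$ is exactly the right end $d$ of $\cap_i I_i$.

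Finally, the crux is identifying the left end. Since $d\in I$, the class $\eta:=[\sigma_{I_1}]_d \smile\dots\smile[\sigma_{I_\ell}]_d\in\bfH^*(X_d)$ is nonzero. I would expand it in the representative-cocycle basis of $\bfH^*(X_d)$ furnished by Prop.~\ref{prop:generators-of-ring}, namely $\eta=\sum_{J'\ni d} c_{J'}[\sigma_{J'}]_d$ with $c_{J'}\in\field$. Restricting to level $t\le d$ and using that the alive classes $\{[\sigma_{J'}]_t : t\in J'\}$ are part of the level-$t$ basis, hence linearly independent, the product at $t$ equals $\sum_{J'} c_{J'}[\sigma_{J'}]_t$ and is nonzero precisely when some surviving term remains, i.e.~when there is a $J'$ with $c_{J'}\ne 0$ and $t\in J'$. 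Setting $b:=\min\{\,\text{left end of }J' : c_{J'}\ne 0,\ d\in J'\,\}$, the product is nonzero exactly for $b\le t\le d$, so $I=[b,d]$ and $b$ is the left end of the bar $I'=J'$ realizing this minimum; note $I'$ need not be any of the $I_i$.

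The main obstacle is this last step: one must legitimately pass from the multiplicative description of the product as a cup product of chosen representatives to an additive description in a cohomology basis at the fixed level $d$, and then transport that additive description down the filtration. The two ingredients that make this work are the naturality of the cup product with respect to the restriction maps (so that the product is a genuine persistent class generated at level $d$) and the fact that restricted representative cocycles which remain alive stay linearly independent; both are already available from the discussion surrounding Prop.~\ref{prop:generators-of-ring}.
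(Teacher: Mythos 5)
Your proof is correct and follows essentially the same route as the paper's: naturality of the cup product under the restriction maps gives the interval structure and pins the right end at the right end of $\cap_i I_i$, and an expansion in the representative-cocycle basis together with linear independence of the surviving classes identifies the left end. The only cosmetic difference is that you expand the product at level $d$ and take the minimum of the contributing bars' left ends, whereas the paper expands it at the left end $b$ and argues that every contributing bar must die exactly there; both hinge on the same two ingredients and yield the same conclusion.
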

\begin{proof} We prove in the case of closed intervals. For the other types of intervals, the statement follows from a similar discussion. 

Let $d$ be the right end of $\cap_{1\leq i\leq \ell}I_i$. Clearly, any $t>d$ is not in $I$, because there is some $I_i$ such that $[\sigma_{I_i}]_t=[0]_t$. To show $d$ is the right end of $I$, it suffices to show that $d$ is in $I$. 
If $d\notin I$, then it follows from $[ \sigma_{I_1}]_d \smile\dots\smile [\sigma_{I_\ell}]_d = [0]_d$ that $[ \sigma_{I_1}]_t \smile\dots\smile [\sigma_{I_\ell}]_t = [0]_t$ for all $t\leq d$. Thus, $I=\emptyset$, which gives a contradiction. Therefore, $d$ is the right end of $I$.

We show that $I$ in an interval, i.e. for any $t\in I$ and $s\in [t,d]$, we have $s\in I$. This is true because $[ \sigma_{I_1}]_s \smile\dots\smile [\sigma_{I_\ell}]_s$, as the preimage of a non-zero element $[ \sigma_{I_1}]_t \smile\dots\smile [\sigma_{I_\ell}]_t$, cannot be zero.

Assume the left end of $I$ is $b$. Then $[ \sigma_{I_1}]_b \smile\dots\smile [\sigma_{I_\ell}]_b\neq 0$ but $[ \sigma_{I_1}]_{b-\epsilon} \smile\dots\smile [\sigma_{I_\ell}]_{b-\epsilon}=0$ for any $\epsilon>0$. Notice that we can write the cup product $[\sigma_{I_1}]_b \smile\dots\smile [\sigma_{I_\ell}]_b=\sum \lambda_{I'} [\sigma_{I'}]_b$ for some coefficients $\lambda_{I'}$ and distinct representative cocycles $\sigma_{I'}$ with $[\sigma_{I'}]_b\neq 0$, where $I'\in \barc\!\left(\bfH^*(\Xfunc)\right)$. For any $\epsilon>0$, it follows from $[ \sigma_{I_1}]_{b-\epsilon} \smile\dots\smile [\sigma_{I_\ell}]_{b-\epsilon}=0$ and the linear independence of $[\sigma_{I'}]_{b-\epsilon}$ that $[\sigma_{I'}]_{b-\epsilon}=0$ for every $I'$. Thus, these $I'$ are bars with left end equal to $b$. 
\end{proof}

\begin{example}[$\operatorname{supp}(\alpha\smile\beta)\neq I_\alpha\cap I_\beta$]
\label{ex:supp v.s. intersection}
Consider the filtration $\Xfunc=\{ X_t\}_{t\geq0}$ of a pinched $2$-torus and its total barcode, as shown in Fig.~\ref{fig:pinched_torus}. Here $\alpha$ is the $1$-cocycle born at $t=1$; $\beta$ is the $1$-cocycles born at $t=2$; $v$ and $\gamma$ be the $0$-cocycle and $2$-cocycle, respectively. Notice that $I_\alpha\cap I_\beta = [1,3)$, while $\operatorname{supp}(\alpha\smile\beta) = [2,3).$ 

\begin{figure}[H]
\centering
\begin{tikzcd}[column sep=tiny,row sep=0pt]
    \begin{tikzpicture}[scale=0.3]
    \filldraw[color=red!40, fill=none, thick](0.75,2) ellipse (0.75 and .86);
    \end{tikzpicture}
	& 
	\begin{tikzpicture}[scale=0.3]
	   \filldraw[color=red!40, fill=none, thick](0.75,2) ellipse (0.75 and .86);
    \node[fill=red,circle,inner sep=.4mm] at (1.5,2)  {};
    \filldraw[color=blue!50, fill=none, thick](3,2) ellipse (1.5 and .5);
    \end{tikzpicture}
	& 
	\begin{tikzpicture}[scale=0.3]
	   \filldraw[color=red!40, fill=none, thick](0.75,2) ellipse (0.75 and .86);
    \node[fill=red,circle,inner sep=.4mm] at (1.5,2)  {};
    \filldraw[color=blue!50, fill=none, thick](3,2) ellipse (1.5 and .5);
    \Coordinate{left}{0,2}
    \Coordinate{right}{6,2}
    \Coordinate{leftinner}{1.5,2}
    \Coordinate{rightinner}{4.5,2}
        \draw (left) to[out=90,in=90,looseness=1] 
        (right) to [out=270,in=270,looseness=1] (left);
        \draw (leftinner) to[out=30,in=150,looseness=1] 
        (rightinner); 
        \draw (1.2,2.15) to[out=330,in=210,looseness=1] 
        (4.8,2.15); 
    \end{tikzpicture}
& \begin{tikzpicture}[scale=0.3]
	   \filldraw[color=red!40, fill=red!10, thick](0.75,2) ellipse (0.75 and .86);
    \node[fill=red,circle,inner sep=.4mm] at (1.5,2)  {};
    \filldraw[color=blue!50, fill=none, thick](3,2) ellipse (1.5 and .5);
    \Coordinate{left}{0,2}
    \Coordinate{right}{6,2}
    \Coordinate{leftinner}{1.5,2}
    \Coordinate{rightinner}{4.5,2}
        \draw (left) to[out=90,in=90,looseness=1] 
        (right) to [out=270,in=270,looseness=1] (left);
        \draw (leftinner) to[out=30,in=150,looseness=1] 
        (rightinner); 
        \draw (1.2,2.15) to[out=330,in=210,looseness=1] 
        (4.8,2.15); 
    \end{tikzpicture}
&
\begin{tikzpicture}[scale=.6] 
    \begin{axis} [ 
    ticklabel style = {font=\large},
    height=4.5cm,
    width=7cm,
    hide y axis,
    axis x line*=bottom,
    xtick={1,2,3,4,5},
    xticklabels={$0$, $1$, $2$, $3$, $4$},
    xmin=.5, xmax=6.5,
    ymin=0, ymax=1.6,]
    \addplot [mark=none,thick] coordinates {(1.05,.2) (5.95,.2)};
    \addplot [mark=none,thick] coordinates {(1.05,.4) (3.95,.4)};
    \addplot [mark=none,thick] coordinates {(2.05,.6) (5.95,.6)};
    \addplot [mark=none,thick] coordinates {(3.05,.8) (5.95,.8)};
    \addplot [mark=none] coordinates {(1,.2)}  node[left] {$v$};
    \addplot [mark=none] coordinates {(2,.6)}  node[left] {$\beta$};
    \addplot [mark=none] coordinates {(1,.4)}  node[left] {$\alpha$};
    \addplot [mark=none] coordinates {(3,.8)}  node[left] {$\gamma$};
    \addplot [mark=o] coordinates {(4,.4)};
    \node[mark=none] at (axis cs:3.5,1.3){\large$\barc\!\left(\bfH^*(\Xfunc)\right)$};
    \end{axis}
    \end{tikzpicture}
\\
\text{\small{$t\in [0,1)$}} 
& \text{\small{$t\in [1,2)$}} 
& \text{\small{$t\in [2,3)$}} 
& \text{\small{$t\geq 3$}}
&
	\end{tikzcd}
\caption{A filtration $\Xfunc$ of a pinched $2$-torus and its total barcode. See Ex.~\ref{ex:supp v.s. intersection}.} 
\label{fig:pinched_torus}
\end{figure}
\end{example}

Because the cup product operation commutes up to a scalar: for any pair $\alpha,\beta$ of cochains, $\alpha\smile\beta=(-1)^s\beta\smile\alpha\text{, for some integer }s$, we immediately have the following proposition.

\begin{proposition} \label{prop:propertiy_of_*_sigma}
Let $I_1,\dots,I_\ell$, be as in Defn.~\ref{def:ell-product}. 
The support $\operatorname{supp}(\sigma_{{I}_1} \smile\dots\smile \sigma_{{I}_\ell})$ 
is symmetric, i.e.~for any permutation $\rho$ of $\{1,2,\dots,\ell\}$, we have
\[\operatorname{supp}(\sigma_{{I}_1} \smile\dots\smile \sigma_{{I}_\ell})= \operatorname{supp}(\sigma_{{I}_{\rho(1)}} \smile\dots\smile \sigma_{{I}_{\rho(\ell)}}).\]
\end{proposition}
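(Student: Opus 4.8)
The plan is to reduce the statement to the case of a single adjacent transposition and then invoke the graded commutativity of the cup product recorded just above the proposition. First I would unwind the definition of the support: a parameter $t$ lies in $\operatorname{supp}(\sigma_{I_1}\smile\dots\smile\sigma_{I_\ell})$ precisely when the class $[\sigma_{I_1}]_t\smile\dots\smile[\sigma_{I_\ell}]_t$ is nonzero in $\bfH^*( X_t)$. Hence it suffices to show that reordering the factors alters this class only by multiplication by a nonzero scalar, since over the field $\field$ a nonzero scalar can neither send a nonzero class to zero nor the reverse.

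Next I would use that the symmetric group on $\{1,\dots,\ell\}$ is generated by adjacent transpositions, so it is enough to treat the interchange of two consecutive factors. Writing $p_i$ for the degree of the homogeneous representative cocycle $\sigma_{I_i}$ (each $\sigma_{I_i}$ represents a bar sitting in a single cohomological degree, so this degree is well defined), graded commutativity gives at every parameter $t$
\[[\sigma_{I_i}]_t\smile[\sigma_{I_{i+1}}]_t=(-1)^{p_i p_{i+1}}\,[\sigma_{I_{i+1}}]_t\smile[\sigma_{I_i}]_t.\]
Using associativity of the cup product to isolate the relevant adjacent pair, swapping the $i$-th and $(i+1)$-th factors therefore multiplies the entire $\ell$-fold product by the sign $(-1)^{p_i p_{i+1}}\in\{\pm 1\}$.

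Composing the adjacent transpositions that realize an arbitrary permutation $\rho$, I would conclude that $[\sigma_{I_{\rho(1)}}]_t\smile\dots\smile[\sigma_{I_{\rho(\ell)}}]_t$ equals $[\sigma_{I_1}]_t\smile\dots\smile[\sigma_{I_\ell}]_t$ up to an overall sign, which is an invertible element of $\field$. Consequently the two products vanish at exactly the same parameters $t$, so their supports coincide, which is the desired symmetry.

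There is no substantive obstacle here; the only point that warrants care is the bookkeeping ensuring each $\sigma_{I_i}$ is homogeneous, so that graded commutativity applies with a definite sign, and the observation that $\pm 1$ is a unit in any field (including characteristic $2$, where the sign is simply trivial).
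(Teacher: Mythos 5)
Your proposal is correct and follows essentially the same route as the paper, which derives the proposition immediately from the graded commutativity of the cup product; you merely fill in the routine details (reduction to adjacent transpositions, and the observation that a sign is a unit in $\field$) that the paper leaves implicit.
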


Let $\barc\!\left(\bfH^+(\Xfunc)\right)$ consist of the positive-degree bars in the barcode of $\Xfunc$. 

\begin{definition}[Persistent cup-length diagram]
\label{def:cup_dgm}
Let $\bsigma=\{\sigma_I\}_{I\in\barc\!\left(\bfH^+(\Xfunc)\right)}$ be a family of representative cocycles for $\bfH^{ + }(\Xfunc) $.
The \textbf{persistent cup-length diagram of $\Xfunc$ (associated to $\bsigma$)} is defined to be the map $\sigmadgmX:\Int\to\N$, given by
\footnote{For notational consistency, in this paper we use $\dgm\left(\cupprod(\cdot),\bsigma\right)$ to denote persistent cup-length diagrams, which is slightly different from the notation  $\dgm_{\bsigma}^\smile(\cdot)$ used in \citep{contessoto_et_al:LIPIcs.SoCG.2022.31}.}:
\begin{align*}
I\mapsto \max\left\{\ell\in\N^+\mid I=\operatorname{supp}(\sigma_{{I}_1} \smile\dots\smile \sigma_{{I}_\ell})\text{, for some }I_i\in\barc\!\left(\bfH^+(\Xfunc)\right)\right\},
\end{align*}
with the convention that $\max\emptyset=0.$
\end{definition}

Recall from \citep[Ex.~18]{contessoto_et_al:LIPIcs.SoCG.2022.31} that the persistent cup-length diagram depends on the choice of the representative cocycles $\bsigma$. However, the persistent cup-length diagram can always be used to compute the persistent cup-length invariant (regardless of the choice of $\bsigma$), through the following theorem.\footnote{This theorem was stated without proof as Thm.~1 of the conference paper \citep{contessoto_et_al:LIPIcs.SoCG.2022.31}.} 
\begin{theorem}
\label{thm:tropical_mobius}
Let $\Xfunc$ be a filtration, and let $\bsigma$ be a family of representative cocycles for the barcode of $\Xfunc$.
The persistent cup-length invariant $\cupprod(\Xfunc)$ can be retrieved from the persistent cup-length diagram $\sigmadgmX$: for any $[a,b]\in \Int$,
\begin{equation}\label{eq:tropical_mobuis}
    \cupprod(\Xfunc)([a,b])=  \max_{[c,d]\supseteq   [a,b]}\sigmadgmX([c,d]). 
\end{equation}
\end{theorem}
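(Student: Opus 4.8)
The plan is to prove the two inequalities separately, showing that $\cupprod(\Xfunc)([a,b])$ equals $\max_{[c,d]\supseteq[a,b]}\sigmadgmX([c,d])$ by establishing $\geq$ and $\leq$. The key bridge is Prop.~\ref{prop:generators-of-ring}, which tells us that $\image(\bfH^*(X_b)\to\bfH^*(X_a))$ is generated as a graded ring by the classes $[\sigma_I]_a$ for those bars $I$ containing $[a,b]$, combined with Prop.~\ref{prop:len_of_basis}, which says the cup-length is the supremum of $\ell$ such that some $\ell$-fold product of these generators is nonzero.

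First I would prove the inequality $\cupprod(\Xfunc)([a,b])\geq\max_{[c,d]\supseteq[a,b]}\sigmadgmX([c,d])$. Fix an interval $[c,d]\supseteq[a,b]$ with $\sigmadgmX([c,d])=\ell$. By Defn.~\ref{def:cup_dgm} there exist bars $I_1,\dots,I_\ell$ with $\operatorname{supp}(\sigma_{I_1}\smile\dots\smile\sigma_{I_\ell})=[c,d]$. Since the support equals $[c,d]$ and $[a,b]\subseteq[c,d]$, each $[a,b]\subseteq I_i$ (because the support is contained in each $I_i$ by Prop.~\ref{prop:support}, as $d$ is the right end of $\cap_i I_i$, and its left end $c\leq a$), so the classes $[\sigma_{I_i}]_a$ all lie in the image ring by Prop.~\ref{prop:generators-of-ring}. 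Moreover $a\in[a,b]\subseteq[c,d]=\operatorname{supp}(\sigma_{I_1}\smile\dots\smile\sigma_{I_\ell})$ guarantees $[\sigma_{I_1}]_a\smile\dots\smile[\sigma_{I_\ell}]_a\neq 0$. Hence there is a nonzero $\ell$-fold product of positive-degree elements in the image ring, so $\cupprod(\Xfunc)([a,b])\geq\ell$.

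For the reverse inequality $\cupprod(\Xfunc)([a,b])\leq\max_{[c,d]\supseteq[a,b]}\sigmadgmX([c,d])$, set $\ell:=\cupprod(\Xfunc)([a,b])$ and apply Prop.~\ref{prop:len_of_basis} to the image ring, using the generating set $\{[\sigma_I]_a:[a,b]\subseteq I\}$ from Prop.~\ref{prop:generators-of-ring}: there exist bars $I_1,\dots,I_\ell$, each containing $[a,b]$, with $[\sigma_{I_1}]_a\smile\dots\smile[\sigma_{I_\ell}]_a\neq 0$. Thus $a\in\operatorname{supp}(\sigma_{I_1}\smile\dots\smile\sigma_{I_\ell})=:[c,d]$, a nonempty interval by Prop.~\ref{prop:support}. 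Its right end $d$ is the right end of $\cap_i I_i\supseteq[a,b]$, so $d\geq b$; since $a\in[c,d]$ we have $c\leq a$, giving $[c,d]\supseteq[a,b]$. By Defn.~\ref{def:cup_dgm}, $\sigmadgmX([c,d])\geq\ell$, and therefore $\max_{[c,d]\supseteq[a,b]}\sigmadgmX([c,d])\geq\ell=\cupprod(\Xfunc)([a,b])$.

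The main obstacle I anticipate is the careful bookkeeping relating the support interval to the interval $[a,b]$ and the bars $I_i$. Specifically, one must verify that the classes forming a witnessing product actually lie in $\image(\bfH^*(X_b)\to\bfH^*(X_a))$, which requires each $I_i\supseteq[a,b]$ rather than merely $a\in I_i$; the nonvanishing at parameter $a$ alone does not immediately guarantee survival up to parameter $b$. Reconciling this uses Prop.~\ref{prop:support}: the support's right endpoint coincides with that of $\cap_i I_i$, and establishing $d\geq b$ from the nonvanishing of the product in the image ring (which encodes survival across $[a,b]$) is the delicate point. Care is also needed because $\sigmadgmX$ is defined only via supports that equal an interval exactly, so in the $\geq$ direction one must match the given $[c,d]$ to an actual support, not just a containing interval.
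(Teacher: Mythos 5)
Your proposal is correct and follows essentially the same route as the paper's proof: it combines Prop.~\ref{prop:generators-of-ring} and Prop.~\ref{prop:len_of_basis} to express $\cupprod(\Xfunc)([a,b])$ via nonzero $\ell$-fold products of the classes $[\sigma_I]_a$ with $I\supseteq[a,b]$, and uses Prop.~\ref{prop:support} for the key equivalence between nonvanishing of the product at $a$ and the support containing $[a,b]$. The only difference is organizational — you argue the two inequalities separately (which incidentally absorbs the $\cupprod=0$ case that the paper treats explicitly) rather than writing a single chain of equalities.
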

\begin{remark}
\label{rem:tropical-Mobius-Inversion}
The persistent cup-length invariant is analogous to the rank invariant: In standard persistence theory, for each interval $[a,b]$ the rank invariant $\rank(\Mfunc)$ of a persistence module $\Mfunc$ 
counts the \textbf{sum} of the multiplicities of the intervals in the barcode $\barc(\Mfunc)$ of $\Mfunc$ that contain $[a,b]$ (see \citep[pg.~106]{cohen2007stability}), i.e. 
$$\rank(\Mfunc)([a,b])=\sum_{[c,d]\supset[a,b]}\dgm(\Mfunc)([c,d]),$$ 
where the multiplicity function $\dgm(\Mfunc)(\cdot)$ is  the persistent diagram of $\Mfunc$.
 Eqn. (\ref{eq:tropical_mobuis}) expresses the fact that a similar relation exists in the case of the persistent cup-length invariant $\cupprod(\Xfunc)$ but with the difference that the cup-length counts the \textbf{maximum} number (instead of the sum) of non-zero cup products of cocycles. That is, we prove that by switching the `sum' with `max' operation (which resembles a `tropical' M\"obius inversion formula) $\cupprod(\Xfunc)$ can be recovered from  $\sigmadgmX$. 
\end{remark}

\begin{proof}[Proof of Thm.~\ref{thm:tropical_mobius}]
\label{pf:tropical_mobius} 
Let $I:=[a,b]$ be a closed interval. We first consider the case when $\cupprod(\Xfunc)([a,b])=0$, in which case the image ring $\image \left(\bfH^*( X_b) \to \bfH^*( X_a) \right)$ is trivial in positive dimensions. We claim that for any $[c,d]\supseteq [a,b]$, $\sigmadgmX([c,d])=0$. Assume not, then $\sigmadgmX([c,d])>0$ for some $[c,d]\supseteq [a,b]$, which necessarily means that there is a bar associated with a positive-degree cocycle that contains $[a,b]$. This contradicts the fact that $\image \left(\bfH^{ + }( X_b)\to \bfH^{ + }( X_a)\right)=0.$ 
\medskip

We now assume $\cupprod(\Xfunc)([a,b])\neq0$ and define 
\[B:=\{[\sigma_{I'}]_a\mid \barc\!\left(\bfH^+(\Xfunc)\right)\ni {I'}\supseteq   [a,b]\}.\]
Recall that for ${I'}=[c,d]$ in the barcode, $\sigma_{I'}$ is a cocycle in $ X_d$ and $[\sigma_{I'}]_a$ is the cohomology class of the restriction $\sigma_{I'}\vert_{C_p( X_a)}$, if the dimension of $\sigma_{I'}$ is $p$.
Then,
\begin{align}
\cupprod(\Xfunc)([a,b]) 
=\,& \len\left(\image \left(\bfH^*( X_b) \to \bfH^*( X_a) \right)\right) \label{eq1}  \\
=\,& \len\left(\linterval  B \rinterval \right) \label{eq2}  \\
=\,& \max \left\{\ell\in\N^+\mid B^{\smile\ell}\neq \{0\}\right\}. \label{eq3} 
\end{align}
Eqn.~(\ref{eq1}) follows from the definition of the persistent cup-length invariant, and Eqn.~(\ref{eq2}) is a direct application of Prop.~\ref{prop:generators-of-ring}, where $\linterval \cdot\rinterval $ denotes the generating set of a ring. 
Because $B$ linearly spans the image $\image(\bfH^{ + }( X_b)\to \bfH^{ + }( X_a))$ in each dimension, the assumption of Prop.~\ref{prop:len_of_basis} is satisfied and thus Eqn.~(\ref{eq3}) follows. 

\medskip

Given $I_1',\dots,I_\ell'\in \barc\!\left(\bfH^+(\Xfunc)\right)$ such that ${I'}_i\supseteq  [a,b]$ for each $i$, we claim that
\[[\sigma_{I_1'} ]_a\smile\dots\smile[\sigma_{I_\ell'} ]_a\neq 0 \iff \operatorname{supp}(\sigma_{I_1'} \smile\dots\smile \sigma_{I_\ell'})\supseteq  [a,b].\]
The `$\Leftarrow$' is trivial. As for `$\Rightarrow$', recall from Prop.~\ref{prop:support} that in this case the support is a non-empty interval with its right end equal to the right end of $\cap_i {I'}_i\supseteq [a,b]$. It follows that the support, as an interval, contains both $a$ and $b$, and thus containing $[a,b]$.

Therefore, we have Eqn.~(\ref{eq4}) below:
\begin{align}
&\cupprod(\Xfunc)([a,b]) \nonumber\\
=\, & \max \left\{\ell\in\N^+\mid B^{\smile\ell}\neq \{0\}\right\} \nonumber\\
=\,& \max \left\{\ell\in\N^+\mid [\sigma_{I_1'} ]_a\smile\dots\smile[\sigma_{I_\ell'} ]_a\neq 0,\,{I'}_i\supseteq  [a,b],\, {I'}_i\in \barc\!\left(\bfH^+(\Xfunc)\right),\, \forall i \right\} \nonumber\\
=\,& \max \left\{\ell\in\N^+\mid \operatorname{supp}(\sigma_{I_1'}\smile\dots\smile\sigma_{I_\ell'})\supseteq  [a,b],\,{I'}_i\in\barc\!\left(\bfH^+(\Xfunc)\right),\, \forall i \right\} \label{eq4}\\
 =\,& \max_{[c,d]\supseteq  [a,b]} \left\{\max\left\{\ell\in\N^+\mid [c,d]=\operatorname{supp}(\sigma_{I_1'}\smile\dots\smile\sigma_{I_\ell'}) \text{, where }{I'}_i\in\barc\!\left(\bfH^+(\Xfunc)\right)\right\}
 \right\} \label{eq-second-last}\\
 =\,& \max_{[c,d]\supseteq  [a,b]}\sigmadgmX([c,d]). \label{eq-last}
\end{align}
Here Eqn.~(\ref{eq-second-last}) and Eqn.~(\ref{eq-last}) follow from the definition of the support of $\ell$-fold products 
(Defn.~\ref{def:ell-product}) and the definition of the persistent cup-length diagram (Defn.~\ref{def:cup_dgm}), respectively.
\end{proof} 

We compute the persistent cup-length diagrams of some filtrations, and utilize Eqn.~(\ref{eq:tropical_mobuis}) in Thm.~\ref{thm:tropical_mobius} for computing the persistent cup-length invariants of these filtrations. We assume the convention that $[1,\infty],[2,\infty],\ldots$ are intervals in $\Int$.

\begin{example}
\label{ex:cup_len_func_torus} 
Recall the filtration $\Xfunc=\{ X_t\}_{t\geq0}$ of a pinched $2$-torus and its total barcode from Fig.~\ref{fig:pinched_torus}. Also recall from Ex.~\ref{ex:supp v.s. intersection} the $1$-cocycles representatives  $\alpha$ and $\beta$ and the $2$-cocycle representative $\gamma$.
Let $\bsigma:=\{\alpha,\beta,\gamma\}$.

Because $\bfH^*(\Xfunc) $ is non-trivial up to degree $2$, $\sigmadgmX(I)\leq 2$ for any $I$.
The only non-trivial cup product is $\alpha\smile\beta$, whose support is $[2,3)$. Thus, $\sigmadgmX([2,3))=2$. 
Therefore, the persistent cup-length diagram $\sigmadgmX$ is (see the left-most figure in Fig.~\ref{fig:torus_cup_len_func_and_dgm} for its visualization):
\[
\sigmadgmX(I)= 
      \begin{cases}
    1, & \mbox{ if } I=[0,3), [1,\infty) \text { or } [2,\infty)\\
    2, & \mbox{ if } I=[2,3) \\
    0, & \mbox{otherwise.} 
     \end{cases}
\]
Applying Thm.~\ref{thm:tropical_mobius}, we obtain the persistent cup-length invariant $\cupprod(\Xfunc)$, visualized in Fig.~\ref{fig:torus_cup_len_func_and_dgm}:
\[
  \cupprod(\Xfunc)([t,s]) = 
      \begin{cases}
       2 , & \text{if } [t,s)\subseteq [2,3) \\
       1 , & \text {if } 0\leq t\leq 2 \text{ and } s\leq 3 \text{; or } 1\leq t< \infty \text{ and } s\geq 3\\
       0 , & otherwise.
      \end{cases}
\] 
\begin{figure}
\centering
      \begin{tikzpicture}[scale=0.6]
    \begin{axis} [ 
    ticklabel style = {font=\large},
    axis y line=middle, 
    axis x line=middle,
    ytick={1,2,3,4,5.3},
    yticklabels={$1$,$2$,$3$,$4$,$\infty$},
    xticklabels={$1$,$2$,$3$,$4$,$\infty$},
    xtick={1,2,3,4,5.3},
    xmin=0, xmax=5.5,
    ymin=0, ymax=5.5,]
    \addplot [mark=none] coordinates {(0,0) (5.3,5.3)};
  \addplot[dgmcolor!40!white,mark=*] (0,3) circle (3pt) node[right,black]{\textsf{1}};
    \addplot[dgmcolor!40!white,mark=*] (1,5.3) circle (3pt) node[below,black]{\textsf{1}};
  \addplot[dgmcolor!40!white,mark=*] (2,5.3) circle (3pt) node[below,black]{\textsf{1}};
  \addplot[dgmcolor!100!white,mark=*] (2,3) circle (3pt) node[below,black]{\textsf{2}};
    \node[mark=none] at (axis cs:3.5,1.5){$\sigmadgmX$};
    \end{axis}
    \end{tikzpicture}
    \hspace{1.5cm}
    \begin{tikzpicture}[scale=0.6]
    \begin{axis} [ 
    ticklabel style = {font=\large},
    axis y line=middle, 
    axis x line=middle,
    ytick={1,2,3,4,5.3},
    yticklabels={$1$,$2$,$3$,$4$,$\infty$},
    xticklabels={$1$,$2$,$3$,$4$,$\infty$},
    xtick={1,2,3,4,5.3},
    xmin=0, xmax=5.5,
    ymin=0, ymax=5.5,]
    \addplot [mark=none] coordinates {(0,0) (5.3,5.3)};
    \addplot [thick,color=dgmcolor!20!white,fill=dgmcolor!20!white, 
                    fill opacity=0.45]coordinates {
            (0,0)
            (0,3) 
            (1,3)
            (1,5.3)  
            (5.3,5.3)
            (0,0)};
    \addplot [thick,color=dgmcolor!40!white,fill=dgmcolor!40!white, 
                    fill opacity=0.45]coordinates {
            (2,3) 
            (2,2)
            (2,2)
            (3,3)
            (2,3)};
    \node[mark=none] at (axis cs:2,4){\textsf{1}};
    \node[mark=none] at (axis cs:2.3,2.6){\textsf{2}};
    \node[mark=none] at (axis cs:3.5,1.5){$\cupprod(\Xfunc)$};
    \end{axis}
    \end{tikzpicture}
\caption{The persistent cup-length invariant $\cupprod(\Xfunc)$ and its persistent cup-length diagram $\sigmadgmX$ (see Ex.~\ref{ex:cup_len_func_torus}) for $\Xfunc$ a filtration of a pinched $2$-torus given in Fig.~\ref{fig:pinched_torus}.} 
\label{fig:torus_cup_len_func_and_dgm}
\end{figure}
\end{example}

To compute the persistent cup-length invariant, it suffices to compute the persistent cup-length diagram. 
For a finite simplicial filtration $\Xfunc: X_1\hookrightarrow\dots\hookrightarrow  X_N(= X)$, let $\barc\!\left(\bfH^+(\Xfunc)\right)$ be the barcode over positive dimensions and $\bsigma:=\{\sigma_I\}_{I\in \barc\!\left(\bfH^+(\Xfunc)\right)}$ a family of representative cocycles. For any $\ell\geq 1$, let $\Sigma_\ell$ be the collection of all
$\operatorname{supp}(\sigma_{{I}_1} \smile\dots\smile \sigma_{{I}_\ell})$
where each $I_i\in \barc\!\left(\bfH^+(\Xfunc)\right)$. Then the persistent cup-length diagram is obtained by first computing $\left\{\Sigma_\ell \right \}_{\ell\geq 1}$ using: 

\begin{algorithm}
\begin{algorithmic}
\label{alg:main-simple}
\While{$\Sigma_\ell\neq \emptyset$}
    \For{$(I_1, \sigma_1)\in \barc\!\left(\bfH^+(\Xfunc)\right)$ and $(I_2, \sigma_2)\in \Sigma_\ell$}
        \If{$\operatorname{supp}(\sigma_{{I}_1} \smile \sigma_{{I}_2})\neq\emptyset$}
            \State Append $(\operatorname{supp}(\sigma_{{I}_1} \smile \sigma_{{I}_2}),\sigma_1\smile\sigma_2)$ to $\Sigma_{\ell+1}$
        \EndIf
    \EndFor
    \State $\ell\gets \ell+1$
\EndWhile
\end{algorithmic}
\end{algorithm}

See \citep[Sec.~3.4]{contessoto_et_al:LIPIcs.SoCG.2022.31} for the detailed algorithm and a proof that our algorithm runs in polynomial-time in the total number of simplices.

\subsection{Persistent LS-category invariant}
\label{sec:ls-cat}

In this section, we study another example of categorical invariants, the LS-category of topological spaces. 
Then we lift it to a persistent invariant, which we call the \emph{persistent LS-category invariant}. 

One expects the persistent LS-category invariant to be difficult to compute, but it will be seen in Prop.~\ref{prop:cup<cat} that the persistent cup-length invariant serves as a computable lower bound estimate of the persistent LS-category.

The LS-category of a space was introduced by Lyusternik and Schnirelmann for providing lower bounds on the number of critical points for smooth functions on a manifold \citep{lusternik1934methodes}. The \emph{LS-category of a map} was first defined by Fox \citep{fox1941lusternik} and subsequently studied by Berstein and Ganea \citep{berstein1962category}. We recall the definitions of the LS-category of spaces and maps:

\begin{definition}[{\citep[Defn.~1.1]{cornea2003lusternik}}]
\label{def:cat(X)}
Let $ X$ be a topological space. The \textbf{LS-category of $ X$}, denoted by $\cat( X)$, is the  least number $n$ (or $+\infty$) of open sets $U_1,\dots,U_{n+1}$ in $ X$ that cover $ X$ such that each inclusion $U_i\hookrightarrow  X$ is null-homotopic (i.e. $U_i$ is contractible to a point in $ X$).
\end{definition}

\begin{definition}[{\citep[Defn.~1.1]{berstein1962category}}]
\label{def:cat(f)}
The \textbf{LS-category of a continuous map $f: X\to  Y$}, denoted by $\cat(f)$, is the least number $n$ (or $+\infty$) such that $ X$ can be covered by open sets $U_1,\dots,U_{n+1}$ such that each $f\vert_{U_i}$ is null-homotopic (i.e. $f\vert_{U_i}$ is homotopic to a constant map from $U_i$ to $X$).
\end{definition}

We recall the following properties of LS-category from \citep{berstein1962category,cornea2003lusternik}, 
which guarantees that the LS-category yields a categorical invariant (see Defn.~\ref{def:categorical invariant})
of $\Top$ even though it is not an epi-mono invariant (see Ex.~\ref{rmk:cat not epi-mono}). 

\begin{proposition}
\label{prop:property-cat(f)}
Let $f: X\to  Y$ be a map of topological spaces.
\begin{enumerate}
    \item  \label{prop:cat(f) v.s. cat(X)}
    $\cat(f)\leq\min\{\cat( X),\cat(Y)\}$. If $f$ is a homotopy equivalence, then $$\cat(f)=\cat( X)=\cat(Y).$$
    \item \label{prop:cat(composition)}
    $\cat(g\circ f)\leq \min\{\cat(f),\cat(g)\}$, for any pair of  maps $f: X\to  Y$ and $g: Y\to  Z$. 
    \item \label{prop: cat of h.e.}
    $\cat(f_1) =\cat(f_2)$, if $f_1$ and $f_2$ are homotopic
    to each other.
    \item \label{prop:cat v.s. cup}
    $\cupprod(f):=\len (\image(\bfH^*(f)))\leq\cat(f)$, where $\bfH^*(f)$ is the map on cohomology induced by $f$. In particular,
    \[\cupprod( X)=\cupprod(\id_{ X})\leq \cat(\id_{ X})=\cat( X).\]
\end{enumerate}
\end{proposition}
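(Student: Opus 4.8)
The plan is to establish the four properties by first disposing of the three ``soft'' statements, namely (2), (3), and the inequality in (1), through routine manipulation of open covers and null-homotopies, and then to treat the cup-length bound (4) separately using relative cup products. The single recurring fact I would use throughout is that being null-homotopic is stable under both pre- and post-composition with arbitrary maps and is a homotopy-invariant property of a map, together with the observation that $\cat(\id_X)=\cat(X)$ follows immediately from the definitions (since for $f=\id_X$ the condition $f|_{U_i}$ null-homotopic is exactly $U_i\hookrightarrow X$ null-homotopic).

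First I would prove (2). Given $f\colon X\to Y$ realizing $\cat(f)=n$ via an open cover $U_0,\dots,U_n$ with each $f|_{U_i}$ null-homotopic, post-composition shows each $(g\circ f)|_{U_i}=g\circ(f|_{U_i})$ is null-homotopic, so $\cat(g\circ f)\le\cat(f)$. For the other inequality I would take an open cover $V_0,\dots,V_m$ realizing $\cat(g)$ with each $g|_{V_j}$ null-homotopic; then $\{f^{-1}(V_j)\}_j$ is an open cover of $X$ on which $(g\circ f)|_{f^{-1}(V_j)}$ factors through $g|_{V_j}$ and is therefore null-homotopic, giving $\cat(g\circ f)\le\cat(g)$. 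The inequality in (1) is then immediate: $\cat(f)=\cat(\id_Y\circ f)\le\cat(\id_Y)=\cat(Y)$ and $\cat(f)=\cat(f\circ\id_X)\le\cat(\id_X)=\cat(X)$. For (3), if $f_1\simeq f_2$ then restricting the homotopy to each member of a cover realizing $\cat(f_1)$ shows $f_1|_{U_i}\simeq f_2|_{U_i}$; since null-homotopy is homotopy invariant, the same cover witnesses $\cat(f_2)\le\cat(f_1)$, and symmetry gives equality. The homotopy-equivalence clause of (1) then follows formally: if $g$ is a homotopy inverse of $f$, then $\cat(X)=\cat(\id_X)=\cat(g\circ f)\le\cat(f)\le\cat(X)$ using (3), (2), and the inequality just proved, and symmetrically $\cat(Y)=\cat(f)$.

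The substantive part is (4). Writing $n=\cat(f)$, I would fix an open cover $U_0,\dots,U_n$ of $X$ with each $f|_{U_i}$ null-homotopic. For any homogeneous class $u\in\bfH^+(Y)$ the restriction of $f^*u$ to $U_i$ equals $(f|_{U_i})^*u=0$, because a null-homotopic map induces the zero map on positive-degree cohomology; hence, by the long exact sequence of the pair $(X,U_i)$, the class $f^*u$ admits a relative lift in $\bfH^*(X,U_i)$ mapping to $f^*u$ under $\bfH^*(X,U_i)\to\bfH^*(X)$. Given $n+1$ homogeneous positive-degree classes $u_1,\dots,u_{n+1}\in\bfH^+(Y)$, I would lift each $f^*u_j$ to $\bfH^*(X,U_{j-1})$ and form the relative cup product, which lands in $\bfH^*\!\left(X,\ \bigcup_{i=0}^{n}U_i\right)=\bfH^*(X,X)=0$. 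By naturality of the cup product with respect to the maps to absolute cohomology, this relative product maps to $f^*u_1\smile\cdots\smile f^*u_{n+1}=f^*(u_1\smile\cdots\smile u_{n+1})$, which is therefore $0$. Since $f^*$ is a graded ring homomorphism, every homogeneous positive-degree element of $\image(\bfH^*(f))$ is of the form $f^*u$ with $u\in\bfH^+(Y)$; thus no product of more than $\cat(f)$ positive-degree elements of $\image(\bfH^*(f))$ can be nonzero, i.e.\ $\cupprod(f)=\len(\image(\bfH^*(f)))\le\cat(f)$. Taking $f=\id_X$ recovers $\cupprod(X)\le\cat(X)$.

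The main obstacle is entirely in (4). Parts (1)--(3) are routine cover bookkeeping, but the cup-length estimate hinges on two facts that must be invoked carefully: that a null-homotopic map annihilates positive-degree cohomology, so the relative lifts exist, and the commutativity of the diagram relating the relative cup product of the $\bfH^*(X,U_{j-1})$-factors to the absolute product in $\bfH^*(X)$, so that vanishing of the relative product forces vanishing of the absolute one. Both are standard, and I would cite the relative-cup-product argument for the cup-length/LS-category inequality from \citep{cornea2003lusternik}; it is precisely this step where the definition of $\cat(f)$ through categorical open covers is indispensable.
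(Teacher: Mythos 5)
Your proof is correct. The paper itself gives no proof of this proposition --- it simply recalls these facts from Berstein--Ganea and Cornea et al.\ --- and your arguments (the open-cover bookkeeping for (1)--(3), using that null-homotopy is preserved by pre- and post-composition and is a homotopy-invariant property, and the relative cup product argument landing in $\bfH^*(X,X)=0$ for (4)) are exactly the standard proofs found in those references, so there is nothing to fault.
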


\begin{remark}
\label{rmk:cat not epi-mono}
The invariant $\cat(\cdot)$ is not an epi-mono invariant. 
Consider the embedding $\iota:\bbS^1\hookrightarrow\mathbb{D}^2$. By Prop.~\ref{prop:property-cat(f)} (\ref{prop:cat(f) v.s. cat(X)}), we have \[\cat(\bbS^1)=1>0=\cat(\iota)=\cat(\mathbb{D}^2).\]
In particular, this implies that $\cat(\iota)=0$  is not equal to $\cat(\mathbf{im}\iota)=\cat(\bbS^1)=1$.
\end{remark}

\begin{example} \label{ex:loop space}
The inequality $\cupprod( X)\leq\cat( X)$ can be strict. For a topological space $ X$, let $L(X)$ be its free loop space, i.e. the set of unbased loops equipped with the compact-open topology. By \citep[Thm. 9.3]{cornea2003lusternik}, if $ X$ be any simply-connected space of finite type (all its homology groups are finitely generated) and non-trivial reduced rational homology, then $\cat(L( X))=\infty.$ For instance (cf. \citep[Rmk. 9.10]{cornea2003lusternik}), for the two-dimensional sphere $\mathbb{S}^2$, we have
\[\cupprod\left(L\!\left(\mathbb{S}^2\right)\right)=1<+\infty = \cat\left(L\!\left(\mathbb{S}^2\right)\right).\]
\end{example}

\begin{example} \label{ex:cat v.s. cup} 
The cup-length and LS-category are not necessarily stronger invariants than each other. 
For instance, the spaces $\mathbb{S}^2$ and the free loop space $L\!\left(\mathbb{S}^2\right)$ have identical cup-length but different LS-category: by Ex. \ref{ex:loop space}, we have 
\[\cupprod(\mathbb{S}^2)=1=\cupprod\left(L\!\left(\mathbb{S}^2\right)\right),\,\cat(\mathbb{S}^2)= 1<\infty=\cat\left(L\!\left(\mathbb{S}^2\right)\right).\]

On the other hand, the spaces $L\!\left(\mathbb{S}^2\times \mathbb{S}^2\right)$ 
and $L\!\left(\mathbb{S}^2\right)$ have identical LS-category but different cup-length:
\[\cupprod\left(L\!\left(\mathbb{S}^2\times \mathbb{S}^2\right)\right)=2>1=\cupprod\left(L\!\left(\mathbb{S}^2\right)\right),\,\cat\left(L\!\left(\mathbb{S}^2\times \mathbb{S}^2\right)\right)= \infty=\cat\left(L\!\left(\mathbb{S}^2\right)\right).\]
\end{example}

Because the LS-category is a categorical invariant, we can lift it to a persistent invariant as:

\begin{definition}[Persistent LS-category]
\label{def:pers-ls}
Given a persistent space $\Xfunc:(\bbR,\leq)\to\Top$, the functor \[\cat(\Xfunc):(\Int,\subseteq)\to(\N,\leq)^{\mathrm{op}}, \text{ with }[a,b]\mapsto \cat( X_b\to  X_a)
\]
is called \textbf{the persistent LS-category invariant}. 
\end{definition}

\begin{restatable}{proposition}{cupcat}
\label{prop:cup<cat}
For any persistent space $\Xfunc:(\bbR,\leq)\to\Top$,
$$\cupprod(\Xfunc)(\cdot)\leq \cat(\Xfunc)(\cdot).$$
\end{restatable}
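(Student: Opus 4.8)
The plan is to reduce the asserted pointwise inequality to the static comparison between cup-length and LS-category of a single map, namely Prop.~\ref{prop:property-cat(f)}(\ref{prop:cat v.s. cup}), which is where all the actual topological content lives. Since both $\cupprod(\Xfunc)$ and $\cat(\Xfunc)$ are, by their respective definitions, read off from the individual transition maps of the filtration, the persistent statement should follow by evaluating the static inequality at each interval separately.

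Concretely, I would first fix an arbitrary interval $[a,b]\in\Int$ and unpack both sides in terms of the transition map $f_a^b:X_a\to X_b$. On the one hand, by Defn.~\ref{def:cup_l_func} the value $\cupprod(\Xfunc)([a,b])$ equals $\len\!\left(\image\big(\bfH^*(X_b)\to\bfH^*(X_a)\big)\right)$, where the map is $\bfH^*(f_a^b)$ induced by contravariance of $\bfH^*$; this is precisely the quantity denoted $\cupprod(f_a^b)=\len(\image(\bfH^*(f_a^b)))$ in Prop.~\ref{prop:property-cat(f)}(\ref{prop:cat v.s. cup}). On the other hand, by Defn.~\ref{def:pers-ls} the value $\cat(\Xfunc)([a,b])$ is by definition $\cat(f_a^b)$, the LS-category of the same transition map. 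Having matched both sides to invariants of the single map $f_a^b$, I would then invoke Prop.~\ref{prop:property-cat(f)}(\ref{prop:cat v.s. cup}) with $f=f_a^b$, which yields
\[
\cupprod(\Xfunc)([a,b])=\cupprod(f_a^b)\leq\cat(f_a^b)=\cat(\Xfunc)([a,b]).
\]
Since $[a,b]$ was arbitrary, this gives the claimed pointwise domination $\cupprod(\Xfunc)(\cdot)\leq\cat(\Xfunc)(\cdot)$.

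I do not expect any genuine obstacle here: the entire argument is a pointwise translation, and the one nontrivial ingredient—the inequality $\len(\image(\bfH^*(f)))\leq\cat(f)$ for a continuous map $f$—is exactly Prop.~\ref{prop:property-cat(f)}(\ref{prop:cat v.s. cup}), recalled from \citep{berstein1962category,cornea2003lusternik} and available to us. The only point requiring mild care is bookkeeping on the direction of the maps, making sure that the cohomology map appearing in $\cupprod(\Xfunc)([a,b])$ is the one induced by the transition map $f_a^b$ used to define $\cat(\Xfunc)([a,b])$, so that both invariants are evaluated on the \emph{same} morphism before the static inequality is applied.
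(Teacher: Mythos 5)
Your proposal is correct and is essentially the paper's own argument: the paper's proof likewise just unfolds Defn.~\ref{def:cup_l_func} and Defn.~\ref{def:pers-ls} at each interval and invokes the static inequality $\cupprod(f)\leq\cat(f)$ from Prop.~\ref{prop:property-cat(f)} (the paper cites item (3), but this appears to be a typo for item (\ref{prop:cat v.s. cup}), which is the one you correctly use). Your extra care about matching both invariants to the same transition map $f_a^b$ is exactly the right bookkeeping and resolves the slight inconsistency in the paper between the introduction ($X_a\to X_b$) and Defn.~\ref{def:pers-ls} ($X_b\to X_a$).
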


\begin{proof}
The proof follows directly by Prop.~\ref{prop:property-cat(f)} (3) and the definitions of the persistent cup length invariant and the persistent LS-category of a persistent space.
\end{proof}

We see in the following example that using the persistent cup-length invariant and Prop.~\ref{prop:cup<cat} can help us compute the persistent LS-category.

\begin{example}[Example of $\cat(\Xfunc)$] 
\label{ex:cat(two disk)}
Let $\Xfunc=\{ X_t\}_{t\geq0}$ be a filtration of the wedge sum of two $2$-disks, as shown in Fig.~\ref{fig:dcircle-filt}. In order to compute the persistent LS-category of $\Xfunc$ from its definition, one needs to figure out the LS-category of the (non-identity) transition maps in $\Xfunc$. Let us instead compute the persistent cup-length invariant first.

\begin{figure}[H]
\centering
\begin{tikzcd}[column sep=small,row sep=tiny]
\begin{tikzpicture}[scale=0.4]
	   \filldraw[fill=none,thick](0,2) circle (1);
    \end{tikzpicture}
	& 
	\begin{tikzpicture}[scale=0.4]
	   \filldraw[fill=none,thick](0,2) circle (1);
	   \filldraw[fill=none,thick](0,0) circle (1);
    \end{tikzpicture}
	& 
	\begin{tikzpicture}[scale=0.4]
	   \filldraw[fill=black!30,thick](0,2) circle (1);
	   \filldraw[fill=none,thick](0, 0) circle (1);
    \end{tikzpicture}
    & 
	\begin{tikzpicture}[scale=0.4]
	   \filldraw[fill=black!30,thick](0,2) circle (1);
	   \filldraw[fill=black!30,thick](0,0) circle (1);
    \end{tikzpicture}
    &
    	\begin{tikzpicture}[scale=.6] 
    \begin{axis} [ 
   ticklabel style = {font=\large},
    width=7cm,
    hide y axis,
    axis x line*=bottom,
    xtick={1,2,3,4,5},
    xticklabels={$0$, $1$, $2$, $3$, $4$},
    xmin=.5, xmax=5.5,
    ymin=0, ymax=1,]
    \addplot [mark=none,thick] coordinates {(1.05,.41) (2.95,.41)};
    \addplot [mark=none,thick] coordinates {(2.05,.21) (3.95,.21)};
    \addplot [mark=none] coordinates {(1,.41)}  node[left] {$\alpha$};
    \addplot [mark=none] coordinates {(2,.21)}  node[left] {$\beta$};
    \node[mark=none] at (axis cs:3.5,0.6){\large$\barc\!\left(\bfH^+(\Xfunc)\right)$};
    \end{axis}
    \end{tikzpicture}
    \\
        \text{\small{$t\in [0,1)$}} 
    & \text{\small{$t\in [1,2)$}} 
    & \text{\small{$t\in [2,3)$}} 
    & \text{\small{$t\geq 3$}} 
    &
	\end{tikzcd}
\caption{A filtration $\Xfunc$ of the wedge sum of two $2$-disks and its positive-degree barcode, where $\alpha$ and $\beta$ are the $1$-cocycles corresponding to the top and bottom circle, respectively. See Ex.~\ref{ex:non-example}.}
\label{fig:dcircle-filt}
\end{figure}

Let $\bsigma:=\{\alpha,\beta\}$. Because all elements in $\bsigma$ have trivial cup products with each other, we have the persistent cup-length diagram 
as below (see the left-most figure in Fig.~\ref{fig:dcirclerep} for its visualization):
\[
\sigmadgmX(I)= 
      \begin{cases}
    1, & \mbox{ if } I=[0,2)  \text { or } I=[1,3)\\
    0, & \mbox{otherwise.} 
     \end{cases}
\]
Applying Thm.~\ref{thm:tropical_mobius}, we obtain the persistent cup-length invariant $\cupprod(\Xfunc)$, visualized in Fig.~\ref{fig:dcirclerep}:
\[
  \cupprod(\Xfunc)([t,s]) = 
      \begin{cases}
       1 , & \text{if }[t,s] \subseteq [0,2) \text{ or } [1,3) \\
       0 , & otherwise.
      \end{cases}
\] 

\begin{figure}[H]
\centering
      \begin{tikzpicture}[scale=0.6]
    \begin{axis} [ 
    ticklabel style = {font=\large},
    axis y line=middle, 
    axis x line=middle,
    ytick={1,2,3,4,5.3},
    yticklabels={$1$,$2$,$3$,$4$,$\infty$},
    xticklabels={$1$,$2$,$3$,$4$,$\infty$},
    xtick={1,2,3,4,5.3},
    xmin=0, xmax=5.5,
    ymin=0, ymax=5.5,]
    \addplot [mark=none] coordinates {(0,0) (5.3,5.3)};
  \addplot[dgmcolor!40!white,mark=*] (0,2) circle (3pt) node[right, black]{\textsf{1}};
    \addplot[dgmcolor!40!white,mark=*] (1,3) circle (3pt) node[below,black]{\textsf{1}};
    \node[mark=none] at (axis cs:3.5,1.5){$\sigmadgmX$};
    \end{axis}
    \end{tikzpicture}
    \hspace{1.5cm}
    \begin{tikzpicture}[scale=0.6]
    \begin{axis} [ 
    ticklabel style = {font=\large},
    axis y line=middle, 
    axis x line=middle,
    ytick={1,2,3,4,5.3},
    xtick={1,2,3,4,5.3},
    xticklabels={$1$, $2$, $3$, $4$, $\infty$}, 
    yticklabels={$1$, $2$, $3$, $4$, $\infty$}, 
    xmin=0, xmax=5.5,
    ymin=0, ymax=5.5,]
    \addplot [mark=none] coordinates {(0,0) (5.3,5.3)};
    \addplot [thick,color=dgmcolor!20!white,fill=dgmcolor!20!white, 
                    fill opacity=0.45]coordinates {
            (1,3) 
            (1,2)
            (0,2)  
            (0,0)
            (3,3)
            (1,3)};
    \node[mark=none] at (axis cs:1.2,1.8){\textsf{1}};
     \node[mark=none] at (axis cs:3.5,1.5){$\cupprod(\Xfunc)=\cat(\Xfunc)$};
    \end{axis}
    \end{tikzpicture}
\caption{The persistent cup-length diagram (left) and the persistent cup-length (or LS-category) invariant (right) of $\Xfunc$, where $\Xfunc$ is the filtration given in Fig.~\ref{fig:dcircle-filt}.}
\label{fig:dcirclerep}
\end{figure}

We now compute the persistent LS-category of $\Xfunc$. For any $[t,s] \subseteq [0,2) \text{ or } [1,3)$, it follows from Prop.~\ref{prop:cup<cat} that $\cat(\Xfunc)([t,s])\geq \cupprod(\Xfunc)([t,s])=1$; it follows from Prop.~\ref{prop:property-cat(f)} (\ref{prop:cat(f) v.s. cat(X)}) that $\cat(\Xfunc)([t,s])\leq \cat( X_s)\leq 1$. Therefore, we have $\cat(\Xfunc)([t,s])\leq \cat( X_s)=1=\cupprod(\Xfunc)([t,s])$.

For $[t,s] $ that is not a subset of $ [0,2) \text{ or } [1,3)$, we show that $\cat(\Xfunc)([t,s])=0=\cupprod(\Xfunc)([t,s])$ by considering different cases:
\begin{itemize}
    \item if $s\in [3,\infty)$, then $\cat(\Xfunc)([t,s])\leq \cat( X_s) = \cat(\mathbb{D}^2\vee \mathbb{D}^2)=0$;
    \item if $s\in [2,3)$ and $t\in [0,1)$, then $$\cat(\Xfunc)([t,s]) = \cat(\mathbb{S}^1\hookrightarrow\mathbb{D}^2\vee \mathbb{S}^1)=\cat(\mathbb{S}^1\hookrightarrow\mathbb{D}^2)\leq \cat(\mathbb{D}^2)=0.$$
\end{itemize}

In summary, we have  proved that $\cat(\Xfunc)([t,s])=\cupprod(\Xfunc)([t,s])$ for any $t\leq s$.
\end{example}

In Sec.~\ref{sec:homo-stab-per-inv}, we will show that the erosion distance between the persistent cup-length (or persistent LS-category) invariants is stable under the homotopy-interleaving distance of persistent spaces, cf. Cor.~\ref{cor:stab-cup} (or Cor.~\ref{cor:stab-cat}). It is worth noticing that even though persistent cup-length serves as a pointwise lower bound of persistent LS-category, the latter is not necessarily a stronger invariant than the former one, nor vice versa. See the example below:
\begin{example} \label{ex:cat v.s. cup p-version}
A constant filtration of $X$ is a filtration $\Xfunc$ such that $X_t=X$ for all $t$ and all transition maps are the identity map on $X$. The phenomenon in the static case that cup-length and LS-category are not necessarily stronger than each other can be easily extended to the persistent setting, by considering the constant filtrations of spaces in Ex.~\ref{ex:cat v.s. cup}.
\end{example}

\subsection{M\"obius inversion of persistent invariants}\label{sec:mobuis}

In this section, we study the Möbius inversion of $\invariant (\Ffunc)$ for a given persistent
object 
$\Ffunc:(\bbR,\leq)\to\catC$ and a categorical invariant $\invariant $.

First, we recall the concept of M\"obius inversion in the sense of Rota \citep{rota1964foundations}.
\begin{definition}[{\citep[Prop.~1 (pg.344)]{rota1964foundations}}]
Let $\mathbcal{Q}=(\mathbcal{Q},\leq)$ be a locally finite poset. We define the \textbf{M\"obius function} $\mu_\mathbcal{Q}:\mathbcal{Q}\times \mathbcal{Q}\to \bbZ$, given recursively by the formula 
\[
\mu_\mathbcal{Q}(p,q) =
\begin{cases} 
      1, & p=q, \\
      -\sum_{p\leq r<q}\mu_\mathbcal{Q}(p,r), & p<q,\\
      0, & otherwise.
   \end{cases}
\]
\end{definition}

We recall the following result of Rota's: 
\begin{proposition}[{\citep[Prop.~2 (pg.344)]{rota1964foundations}}]
    Let $\mathbcal{Q}=(\mathbcal{Q},\leq)$ be a locally finite poset with an initial element $0$ (i.e.~$0\leq q$, for all $q\in \mathbcal{Q}$) and let $\field$ be a  field. Let $f,g:\mathbcal{Q}\to \field$ be a pair of functions. If $f(q)=\sum_{p\leq q}g(q)$ for  $q\in \mathbcal{Q}$, then $g$ is given point-wisely by
    $$g(q)=\sum_{p\leq q}f(p)\mu_\mathbcal{Q}(p,q)\text{, for }q\in \mathbcal{Q}.$$
    The function $g$ will be called the \textbf{M\"obius inversion} of $f$.
\end{proposition}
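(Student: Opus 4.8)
The plan is to substitute the defining relation of $f$ into the proposed formula for $g$ and then collapse the resulting double sum by means of the recursion for $\mu_\mathbcal{Q}$. Throughout, the hypothesis that $\mathbcal{Q}$ has an initial element $0$ together with local finiteness guarantees that the down-set $\{p : p\leq q\}=[0,q]$ is finite for every $q\in\mathbcal{Q}$, so every sum below is finite and the order of summation may be interchanged without concern. (Here I read the hypothesis as $f(q)=\sum_{p\leq q}g(p)$, i.e. $g$ evaluated at the summation variable.)

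First I would compute, for a fixed $q\in\mathbcal{Q}$,
\[
\sum_{p\leq q} f(p)\,\mu_\mathbcal{Q}(p,q)
=\sum_{p\leq q}\mu_\mathbcal{Q}(p,q)\sum_{s\leq p} g(s)
=\sum_{s\leq q} g(s)\Bigg(\sum_{s\leq p\leq q}\mu_\mathbcal{Q}(p,q)\Bigg),
\]
where the last equality interchanges the order of summation over the finite index set $\{(s,p):s\leq p\leq q\}$. Thus the claim $g(q)=\sum_{p\leq q}f(p)\,\mu_\mathbcal{Q}(p,q)$ reduces to the orthogonality identity
\[
\sum_{s\leq p\leq q}\mu_\mathbcal{Q}(p,q)=\begin{cases}1,&s=q,\\ 0,&s<q.\end{cases}
\]

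The key point, and the main obstacle, is that the given recursion directly yields the \emph{dual} identity $\sum_{p\leq r\leq q}\mu_\mathbcal{Q}(p,r)=\delta_{p,q}$ (the sum running over the upper endpoint), whereas the reduction above requires summing $\mu_\mathbcal{Q}(\cdot,q)$ over the \emph{lower} endpoint. This distinction is cleanest in the incidence algebra of $\mathbcal{Q}$ over $\field$: functions on comparable pairs under the convolution $(\alpha\ast\beta)(p,q)=\sum_{p\leq r\leq q}\alpha(p,r)\beta(r,q)$, with unit $\delta$ and with $\zeta(p,q)=1$ for all $p\leq q$. In these terms the recursion says exactly that $\mu_\mathbcal{Q}\ast\zeta=\delta$, i.e. $\mu_\mathbcal{Q}$ is a \emph{left} inverse of $\zeta$, while the orthogonality identity I need is $\zeta\ast\mu_\mathbcal{Q}=\delta$, i.e. that $\mu_\mathbcal{Q}$ is also a \emph{right} inverse (note $\sum_{s\leq p\leq q}\mu_\mathbcal{Q}(p,q)=(\zeta\ast\mu_\mathbcal{Q})(s,q)$ since $\zeta(s,p)=1$).

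To bridge this gap I would introduce the function $\nu$ defined by the dual recursion $\nu(q,q)=1$ and $\nu(p,q)=-\sum_{p<r\leq q}\nu(r,q)$ for $p<q$, which by construction satisfies $\zeta\ast\nu=\delta$. Associativity of convolution then gives $\mu_\mathbcal{Q}=\mu_\mathbcal{Q}\ast\delta=\mu_\mathbcal{Q}\ast(\zeta\ast\nu)=(\mu_\mathbcal{Q}\ast\zeta)\ast\nu=\delta\ast\nu=\nu$, so $\mu_\mathbcal{Q}$ coincides with $\nu$ and hence $\zeta\ast\mu_\mathbcal{Q}=\delta$, which is precisely the orthogonality identity. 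Substituting it back into the computation yields $\sum_{p\leq q}f(p)\,\mu_\mathbcal{Q}(p,q)=\sum_{s\leq q}g(s)\,\delta_{s,q}=g(q)$, completing the argument. I expect the only delicate steps to be the finiteness and reindexing justification (supplied entirely by the initial-element hypothesis) and the left-versus-right inverse distinction resolved above; everything else is routine bookkeeping.
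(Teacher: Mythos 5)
Your proof is correct. Note that the paper itself offers no proof of this proposition — it is quoted directly from Rota — so there is nothing internal to compare against; your argument is the standard incidence-algebra proof (essentially Rota's own), and you correctly handle the only two delicate points: reading the hypothesis as $f(q)=\sum_{p\leq q}g(p)$ (fixing the paper's typo, and using the initial element to make that down-set finite), and the fact that the stated recursion only exhibits $\mu_\mathbcal{Q}$ as a left inverse of $\zeta$ while the inversion formula requires the right-inverse identity $\zeta\ast\mu_\mathbcal{Q}=\delta$, which you recover via associativity of convolution in the (locally finite) incidence algebra.
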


Following \citep[Defn.~2.2]{patel2018generalized}, we consider a certain constructibility condition on persistent objects, in which case the M\"obius inversion of a persistent invariant associated to such persistent objects exists. Let $\catC$ be a category with an identity object $e$. For a set of real number $\{s_1<\cdots<s_m\}$, a persistent object $\Ffunc:(\bbR,\leq)\to \catC$  is said to be \textit{$\{s_1<\cdots<s_m\}$-constructible}, if $F_t\to F_s$ is an isomorphism when $[t,s]\subseteq[s_i,s_{i+1}]$ for some $i$ or $[t,s]\subseteq[s_m,\infty)$, and $F_t\to F_s$ is the identity on $e$ when $[t,s]\subseteq (-\infty,s_1).$ \label{para:constructibility}

\begin{definition}\label{def:I-dgm}
Let $\Ffunc$ be an $\{s_1<\cdots<s_m\}$-constructible persistent object. 
Given any persistence $\invariant $-invariant $\invariant (\Ffunc):\Int\to \N$ (viewed as a function), 
we define the \textbf{persistence $\invariant $-diagram}\footnote{We only consider what Patel called the type $\mathfrak{A}$ persistence diagram \citep[Defn.~7.1]{patel2018generalized}. In \citep[Defn.~7.2]{patel2018generalized}, the author also considered another notion of persistence diagram when the underlying category is abelian. 
In this paper, we face categories that are not abelian, such as the category of rings and the category of flags (see Rmk.~\ref{rmk:non-abelian}).
} (associated to $\Ffunc$) 
$\dgm(\invariant(\Ffunc))(\cdot):\Int\to \bbZ$ point-wisely as   
\begin{equation}\label{eq:mobius-I}
\begin{split}
    \dgm(\invariant(\Ffunc))(\linterval  s_i,s_j\rinterval ):= & \invariant(\Ffunc) (\linterval  s_i,s_j\rinterval ) - \invariant(\Ffunc) (\linterval  s_{i-1},s_j\rinterval ) \\
    &- \invariant(\Ffunc) (\linterval  s_i,s_{j+1}\rinterval ) + \invariant(\Ffunc) (\linterval  s_{i-1},s_{j+1}\rinterval ),
\end{split}
\end{equation}
$    \dgm(\invariant(\Ffunc))(\linterval  s_i,\infty\rinterval ) := \invariant(\Ffunc) (\linterval  s_i,\infty\rinterval ) - \invariant(\Ffunc) (\linterval  s_{i-1},\infty\rinterval )$, and $\dgm(\invariant(\Ffunc))(I):=0$ otherwise.
\end{definition}

\begin{proposition}
The persistence $\invariant $-diagram of $\dgm(\invariant(\Ffunc))$ in Defn.~\ref{def:I-dgm} agrees with the M\"obius inversion of $\invariant (\Ffunc):\Int\to \field$ in the sense of Rota, i.e. 
\begin{equation}\label{eq:mobius I = sum dgm}
    \invariant(\Ffunc)([a,b]) = \sum_{[c,d]\supseteq  [a,b]} \dgm(\invariant(\Ffunc))([c,d]).
\end{equation}
\end{proposition}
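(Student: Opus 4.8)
The plan is to verify the summation identity (\ref{eq:mobius I = sum dgm}) directly, since by Rota's proposition (recalled above) the Möbius inversion $g$ of $f$ in a locally finite poset is characterized by the relation $f(q)=\sum_{p\le q}g(p)$. Thus, working in the poset $(\Int,\supseteq)$ — in which the order relation $[c,d]\le[a,b]$ is exactly $[c,d]\supseteq[a,b]$ — establishing (\ref{eq:mobius I = sum dgm}) is equivalent to asserting that $\dgm(\invariant(\Ffunc))$ is the Möbius inversion of $\invariant(\Ffunc)$. Because $\Ffunc$ is $\{s_1<\cdots<s_m\}$-constructible, the diagram $\dgm(\invariant(\Ffunc))$ is supported on the finitely many grid intervals $[s_i,s_j]$ and $[s_i,\infty]$, so the right-hand side of (\ref{eq:mobius I = sum dgm}) is a finite sum and the ambient poset may be replaced by its locally finite subposet of grid intervals (whose $\supseteq$-top $[s_1,\infty]$ serves as the initial element required by Rota).

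First I would reduce to the case where $[a,b]$ is itself a grid interval. Because $\invariant$ preserves isomorphisms (Rmk.~\ref{rmk:invariant preserves iso}), and is in fact unchanged under composition with them — if $u$ is an isomorphism then $\invariant(u\circ f)=\invariant(f)$ and $\invariant(f\circ u)=\invariant(f)$, obtained by applying Condition~(ii') of Prop.~\ref{prop:eqvi def for cat inv} to $u$ and to $u^{-1}$ — the constructibility of $\Ffunc$ forces $[a,b]\mapsto\invariant(f_a^b)$ to be constant on each cell of the grid subdivision of $\{(a,b):a\le b\}$ cut out by the critical values. The right-hand side of (\ref{eq:mobius I = sum dgm}) is likewise constant on these cells, since $[s_i,s_j]\supseteq[a,b]$ depends only on whether $s_i\le a$ and $s_j\ge b$. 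Hence it suffices to check (\ref{eq:mobius I = sum dgm}) when $a=s_i$ and $b=s_j$, or $b=\infty$.

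Writing $F(i,j):=\invariant(\Ffunc)([s_i,s_j])$, the main computation is a two-dimensional telescoping in the two endpoint directions. Substituting the four-term definition (\ref{eq:mobius-I}) into $\sum_{i'\le i}\sum_{j'\ge j}\dgm([s_{i'},s_{j'}])$, each summand equals $G(i',j')-G(i',j'{+}1)$, where $G(i',\cdot):=F(i',\cdot)-F(i'-1,\cdot)$ is the first difference in the left-endpoint direction; the inner sum over $j'$ telescopes to $G(i',j)-G(i',m{+}1)$. Adding the two-term contribution $\dgm([s_{i'},\infty])=G(i',\infty)$ and using that constructibility makes all values stabilize beyond $s_m$ (so that $G(i',m{+}1)=G(i',\infty)$) collapses the inner total to $G(i',j)$. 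The outer sum $\sum_{i'=1}^{i}G(i',j)$ then telescopes to $F(i,j)-F(0,j)$.

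It remains to show that the lower boundary term $F(0,j)=\invariant(\Ffunc)([s_0,s_j])$, for a formal $s_0<s_1$, vanishes. By constructibility $F_{s_0}=e$ is the identity object, so $f_{s_0}^{s_j}=f_{s_0}^{s_j}\circ\id_e$ yields $\invariant(f_{s_0}^{s_j})\le\invariant(\id_e)=\invariant(e)$ via Condition~(ii'); under the standing convention that the identity object satisfies $\invariant(e)=0$ and that $\invariant$ is $\N$-valued (hence nonnegative), this forces $F(0,j)=0$, and the telescoped sum equals $F(i,j)=\invariant(\Ffunc)([s_i,s_j])$, as desired. The case $b=\infty$ is identical but one dimension simpler. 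I expect the main obstacle to lie in the careful bookkeeping at the two boundaries: matching the degenerate two-term formula at $\infty$ to the telescoping (which rests on the stabilization of $\invariant(\Ffunc)$ past $s_m$), and securing the vanishing of the lower boundary term, which is exactly where the identity-object convention $\invariant(e)=0$ is genuinely invoked.
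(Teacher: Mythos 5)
Your telescoping computation on the grid is correct and is, in substance, the argument of Patel's Thm.~4.1 that the paper itself merely cites (``the proof is formally the same \ldots and omitted''), so your write-up is more explicit than the paper's own. The two boundary points you isolate are the right ones: the stabilization past $s_m$ that reconciles the two-term formula at $\infty$ with the four-term one, and the vanishing of the lower boundary term $\invariant(\Ffunc)([s_0,s_j])$, which does require the convention $\invariant(e)=0$ for the identity object --- a hypothesis the paper never states, so flagging it is a genuine catch.

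The gap is in your reduction to grid intervals. The left-hand side $\invariant(f_a^b)$ is indeed constant on the half-open cells $[s_i,s_{i+1})\times[s_j,s_{j+1})$ (your isomorphism argument is fine), but the right-hand side is not constant on those same cells: the containment $[s_k,s_l]\supseteq[a,b]$ reads $s_k\le a$ \emph{and} $s_l\ge b$, and the second condition flips \emph{inside} the cell, being true at $b=s_j$ and false for $b\in(s_j,s_{j+1})$. So the two sides are locally constant with respect to different cell decompositions in the right-endpoint direction, and the identity genuinely fails off the grid under the closed-interval convention. Concretely, let $\Ffunc$ be the interval module supported on $[s_1,s_2)$ and $\invariant=\rank$: then $\dgm(\invariant(\Ffunc))$ is the indicator of the singleton $[s_1,s_1]$, so for $s_1<b<s_2$ the left side of Eqn.~(\ref{eq:mobius I = sum dgm}) at $[s_1,b]$ equals $1$ while the right side equals $0$. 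The proposition must therefore be read as an identity on the finite poset of grid intervals $\{[s_i,s_j]\}\cup\{[s_i,\infty]\}$ --- which is also the only setting in which Rota's inversion for locally finite posets, as recalled in the paper, literally applies --- and your proof becomes correct once you drop the claim that the right-hand side is constant on the same cells and restrict the verification (and the statement) to that poset.
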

\begin{proof}
    The proof is formally the same as in \citep[Thm.~4.1]{patel2018generalized} and omitted. 
\end{proof}

\begin{example}\label{ex:rk and dgm} 
Recall from Sec.~\ref{sec:pers-theo} the notion of the standard persistence module and its associated persistence diagram. 
Given a $\{s_1<\cdots<s_m\}$-constructible persistence module $\Mfunc:(\bbR,\leq)\to\Vect$, we can recover the persistence diagram $\dgm(\Mfunc)$ of $\Mfunc$ element-wise from the rank invariant $\rank(\Mfunc):\Int\to\N$ of $\Mfunc$, via Eqn.~(\ref{eq:mobius-I}). 
Reversely, $\rank(\Mfunc)$ can be obtained from $\dgm(\Mfunc)$ via Eqn.~(\ref{eq:mobius I = sum dgm}).
\end{example}

Finally, note that the Möbius inversion of the rank invariant of a constructible persistence module $\Mfunc$ is always non-negative, is due to the fact that $\Mfunc$ is interval decomposable (see \citep[Thm.~1.1]{crawley2015decomposition}).

\begin{example}[M\"obius inversion of $\cupprod(\Xfunc)$ or $\cat(\Xfunc)$ can be negative]
\label{ex:non-example}
Let $\Xfunc$ be the filtration of the wedge sum of two $2$-disks given in Fig.~\ref{fig:dcircle-filt}, and recall from Ex.~\ref{ex:cat(two disk)} that $\cupprod(\cdot)=\cat(\cdot)=: \invariant.$
If we consider the singleton interval $[1,1]=\{1\}$, then the M\"obius inversion of $ \invariant$ applied to 
$[1,1]$ is negative, i.e.
\begin{align*}
 \dgm(\invariant(\Xfunc))([1,1])&= \invariant([1,1])- \invariant([0,1])- \invariant([1,2]))+ \invariant([0,2])\\
&=1-1-1+0=-1<0.
\end{align*}
\end{example}

\section{Stability of persistent invariants} \label{sec:all-stability}

In Sec.~\ref{sec:cat-stab-per-inv}, we recall the notions of the interleaving distance between persistent objects (see Defn.~\ref{def:interleaving-dist}) and the erosion distance $d_{\mathrm{E}}$ between persistent invariants (see Defn.~\ref{def:de}). We show the following categorical stability for any persistent invariant:

\distab*

In Sec.~\ref{sec:homo-stab-per-inv}, we show that the erosion distance $d_{\mathrm{E}}$ between persistent invariants that arise from weak homotopy invariants is stable under the homotopy interleaving $d_{\mathrm{HI}}$ (see Defn.~\ref{def:dhi}) between persistent spaces. Subsequently, for persistent spaces arising from Vietoris-Rips filtrations, we establish the stability of persistent invariants under the Gromov-Hausdorff distance $d_{\mathrm{GH}}$ between metric spaces. 

\homostab*

By checking that the persistent cup-length invariant of persistent spaces and the persistent LS-category of persistent CW complexes satisfy the assumptions in the above theorem, we obtain the following two corollaries:

\homostabcup*

\homostabcat*

\subsection{Categorical stability of persistent invariants}
\label{sec:cat-stab-per-inv}
We first recall the definition of the interleaving distance between persistent objects and the notion of erosion distance between persistent invariants. 

\begin{definition}[Interleaving distance, {\citep[Defn.~3.20]{bubenik2015metrics}}]
\label{def:interleaving-dist}
Let $\catC$ be any category. 
Let $\Ffunc,\Gfunc:(\mathbb{R},\leq)\to \catC$ be a pair of persistent objects. $\Ffunc,\Gfunc$ are said to be $\epsilon$-interleaved if there exists a pair of natural transformations $\varphi=(\varphi_t: F_t\to  G_{t+\epsilon})_{t\in\mathbb{R}}$ and $\psi=(\psi_t: G_t\to  F_{t+\epsilon})_{t\in\mathbb{R}}$, i.e. the diagrams 
\[
\xymatrix
{
 F_a\ar[dr]_{\varphi_a}\ar[rr]^{f_{a}^{b}} &   &  F_{b}\ar[dr]_{\varphi_b} &  &  G_a\ar[dr]_{\psi_a}\ar[rr]^{g_{a}^{b}} &   &  G_{b} \ar[dr]_{\psi_b}\\
	 &  G_{a+\epsilon} \ar[rr]_{g_{a+\epsilon}^{b+\epsilon}} & &
	  G_{b+\epsilon}
	 & &  F_{a+\epsilon} \ar[rr]_{f_{a+\epsilon}^{b+\epsilon}}
	 & &
	 F_{b+\epsilon}
	 \\
}
\]
commute for all $a\leq b$ in $\bbR$; and such that the diagrams
\[
\xymatrix
{
 F_t\ar[dr]_{\varphi_t}\ar[rr]^{f_{t}^{t+2\epsilon}}  &  &  F_{t+2\epsilon} & & &  G_t\ar[dr]_{\psi_t}\ar[rr]^{g_{t}^{t+2\epsilon}}  &  &  G_{t+2\epsilon} \\
	 &  G_{t+\epsilon} \ar[ur]_{\psi_{t+\epsilon}} & & & & &  F_{t+\epsilon} \ar[ur]_{\varphi_{t+\epsilon}}\\
}
\]
commute for all $t\in\mathbb{R}$.
The interleaving distance between $\Ffunc$ and $\Gfunc$ is
$$\di^{\catC}(\Ffunc,\Gfunc):= \inf\lbrace \epsilon\geq 0\mid\text{ there is an }\epsilon\text{-interleaving between }\Ffunc\text{ and }\Gfunc\rbrace.$$
\end{definition}

\begin{definition}[Erosion distance,
{\citep[Defn.~5.3]{patel2018generalized}}]
\label{def:de}
Let $\bfJ_1,\bfJ_2: \Int\to (\catN,\leq)^{\mathrm{op}}$ be two functors. 
$\bfJ_1,\bfJ_2$ are said to be \textit{$\epsilon$-eroded} if $\bfJ_1([a,b])\geq \bfJ_2([a-\epsilon,b+\epsilon])$ and $\bfJ_2([a,b])\geq \bfJ_1([a-\epsilon,b+\epsilon])$, for all $[a,b]\in\Int$.
The \textit{erosion distance of $\bfJ_1,\bfJ_2$} is 
$$d_{\mathrm{E}}(\bfJ_1,\bfJ_2):=\inf \lbrace \epsilon\geq0
\mid \bfJ_1,\bfJ_2\text{ are }\epsilon\text{-eroded}\},
$$
with the convention that $d_{\mathrm E}(\bfJ_1,\bfJ_2)=\infty$ if an $\epsilon$ satisfying the condition above does not exist.
\end{definition}

\begin{proof}[Proof of Thm.~\ref{thm:stab-per-inv}]
Denote by $f_a^b: F_a\to  F_b$ and $g_a^b: G_a\to  G_b$, $a\leq b$, the associated morphisms from $\Ffunc$ to $\Gfunc$.
Assume that $\Ffunc,\Gfunc$ are $\epsilon$-interleaved.
Then, there exist two $\bbR$-indexed families of morphisms $\varphi_t: F_t\to  G_{t+\epsilon}$ and $\psi_t: G_t\to   F_{t+\epsilon}$, which are natural for all $t\in\bbR$, such that $\psi_{t+\epsilon}\circ \varphi_t=f_t^{t+2\epsilon}$ and $\varphi_{t+\epsilon}\circ \psi_t=g_t^{t+2\epsilon}$, for all $t\in\bbR$.
Let $[a,b]\in\Int$. We claim that $\invariant (g_{a-\epsilon}^{b+\epsilon})\leq \invariant \left(f_a^b\right)$. If we show this, then similarly we can show the symmetric inequality, and therefore obtain that $\invariant (\Ffunc),\invariant (\Gfunc)$ are $\epsilon$-eroded.
Indeed, the claim is true because
\begin{align*}
    \invariant (g_{a-\epsilon}^{b+\epsilon})&=\invariant (g_{b-\epsilon}^{b+\epsilon}\circ g_{a-\epsilon}^{b-\epsilon})\\
    &=\invariant (\varphi_{b}\circ \psi_{b-\epsilon}\circ g_{a-\epsilon}^{b-\epsilon})\\
    &\leq \invariant (\psi_{b-\epsilon}\circ g_{a-\epsilon}^{b-\epsilon})\hspace{1em}(\text{by  condition (ii) of Defn.~\ref{def:categorical invariant}})\\
    &=\invariant (f_{a}^{b}\circ \psi_{a-\epsilon} )\hspace{1em}(\text{by naturality of }\psi)\\
    &\leq \invariant (f_{a}^{b}) \hspace{1em}(\text{by  condition (ii) of Defn.~\ref{def:categorical invariant}}).
\end{align*}
\end{proof}

\subsection{Homotopical stability of persistent invariants}
\label{sec:homo-stab-per-inv}
To prove 
Thm.~\ref{thm:main-stability}, we first recall the definition of the homotopy-interleaving distance, together with certain results from persistence theory. 

Following the terminology in \citep[Defn.~1.7]{blumberg2017universality}, a pair of persistent spaces $\Xfunc,\Yfunc:(\bbR,\leq)\to\Top$ are called \textbf{weakly equivalent},
denoted by $\Xfunc\simeq\Yfunc$, if there exists a persistent space $\Zfunc:(\bbR,\leq)\to\Top$ and a pair of natural transformations
$\varphi:\Zfunc\Rightarrow\Xfunc$ and $\psi:\Zfunc\Rightarrow\Yfunc$
such that for each $t\in\bbR$, the maps $\varphi_t: Z_t\to X_t$ and $\psi_t: Z_t\to  Y_t$ are weak homotopy equivalences, i.e., they induce isomorphisms on all homotopy groups. 

\begin{definition}[The homotopy interleaving distance, {\citep[Defn.~3.6]{blumberg2017universality}}]
\label{def:dhi}
Let $\Xfunc,\Yfunc:(\mathbb{R},\leq)\to \Top$ be a pair of persistent spaces.
The \textit{homotopy interleaving distance of $\Xfunc,\Yfunc$} is
$$d_{\mathrm{HI}}(\Xfunc,\Yfunc)=\inf\left\{\di^{\Top}(\Xfunc',\Yfunc')\hspace{0.5em}\mid\Xfunc'\simeq \Xfunc\text{ and }\Yfunc'\simeq \Yfunc\right\}.$$
\end{definition}

\begin{proposition}[{\citep[Prop. 1.9 \& Sec.~6.1]{blumberg2017universality}}]
\label{prop:dh-dgh}
For compact metric spaces $X$ and $Y$, 
\[\dhi\left( \VR_\bullet(X),\VR_\bullet(Y)\right)\leq 2\cdot \dgh(X,Y).\]
\end{proposition}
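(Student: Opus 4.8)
The plan is to reduce the statement to the correspondence characterization of the Gromov--Hausdorff distance and then to build, from a low-distortion correspondence, a homotopy-commutative interleaving between the two Vietoris--Rips filtrations. Recall that $\dgh(X,Y)=\tfrac12\inf_R \operatorname{dis}(R)$, where $R$ ranges over correspondences $R\subseteq X\times Y$ (relations whose two projections to $X$ and $Y$ are both surjective) and $\operatorname{dis}(R):=\sup\{|d_X(x,x')-d_Y(y,y')| : (x,y),(x',y')\in R\}$. So it suffices to fix $r>\dgh(X,Y)$, choose a correspondence $R$ with $\operatorname{dis}(R)<2r$, and prove $\dhi(\VR_\bullet(X),\VR_\bullet(Y))\le 2r$; the claim then follows by letting $r\downarrow \dgh(X,Y)$.

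First I would extract \emph{selection maps} from $R$: for each $x\in X$ pick some $\phi(x)\in Y$ with $(x,\phi(x))\in R$, and for each $y\in Y$ pick some $\psi(y)\in X$ with $(\psi(y),y)\in R$ (using surjectivity of the projections). For any $x,x'\in X$ the distortion bound gives $d_Y(\phi(x),\phi(x'))\le d_X(x,x')+2r$, so $\phi$ sends any finite set of diameter $\le t$ to one of diameter $\le t+2r$; hence $\phi$ induces simplicial maps $\VR_t(X)\to \VR_{t+2r}(Y)$ for every $t$, and symmetrically $\psi$ induces $\VR_t(Y)\to\VR_{t+2r}(X)$. Since $\phi$ and $\psi$ are single fixed vertex maps, independent of the scale, the resulting maps commute strictly with the inclusions $\VR_t\hookrightarrow\VR_{t'}$, so the naturality squares of the interleaving commute on the nose.

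The heart of the argument is the two triangles, and this is where the relevant distance must be $\dhi$ rather than $\di^{\Top}$. Consider the composite $\psi\circ\phi$ against the inclusion $\iota:\VR_t(X)\to\VR_{t+4r}(X)$. Applying the distortion bound to the pairs $(\psi\phi(x),\phi(x))\in R$ and $(x,\phi(x))\in R$ yields $d_X(\psi\phi(x),x)\le 2r$ for every $x$. Consequently, for any simplex $\sigma\in\VR_t(X)$ and any $x,x'\in\sigma$ one checks $d_X(\psi\phi(x),\psi\phi(x'))\le t+4r$ and $d_X(\psi\phi(x),x')\le t+2r$, so the union $\sigma\cup\psi\phi(\sigma)$ has diameter $\le t+4r$ and is a simplex of $\VR_{t+4r}(X)$. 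Thus $\psi\circ\phi$ and $\iota$ are \emph{contiguous} as maps into $\VR_{t+4r}(X)$, hence homotopic after geometric realization; the symmetric computation handles $\phi\circ\psi$. This exhibits a $2r$-interleaving that commutes strictly on squares and up to homotopy on triangles.

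The remaining step is to convert this homotopy-commutative interleaving into the bound $\dhi\le 2r$. This is exactly the content of the Blumberg--Lesnick framework \citep[Sec.~6.1]{blumberg2017universality}: a homotopy-commutative $\epsilon$-interleaving of persistent spaces can be rectified, after replacing each filtration by a weakly equivalent one, into an honest $\epsilon$-interleaving, whence $\dhi\le\epsilon$. Invoking this with $\epsilon=2r$ gives $\dhi(\VR_\bullet(X),\VR_\bullet(Y))\le 2r$, and taking the infimum over $r>\dgh(X,Y)$ completes the proof. I expect the genuine obstacle to be precisely this last rectification step: the selection maps only yield homotopy-commuting triangles (via contiguity), and making them strict without disturbing the scale shift is what forces the use of $\dhi$ together with the homotopy-theoretic machinery of Blumberg--Lesnick, rather than an elementary interleaving argument.
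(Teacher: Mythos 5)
Note first that the paper does not actually prove this proposition: it is imported from Blumberg and Lesnick \citep[Prop.~1.9 \& Sec.~6.1]{blumberg2017universality}, so there is no in-paper argument to compare yours against. Judged on its own terms, your proposal is correct up to and including the contiguity computations: the correspondence characterization of $\dgh$, the selection maps $\phi,\psi$, the strict commutativity of the naturality squares (because $\phi$ and $\psi$ are scale-independent vertex maps), and the contiguity of $\psi\circ\phi$ (resp.\ $\phi\circ\psi$) with the inclusion into $\VR_{t+4r}(X)$ (resp.\ $\VR_{t+4r}(Y)$) are all correctly executed, and they do produce a homotopy-commutative $2r$-interleaving. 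This is the standard first half of every proof of Gromov--Hausdorff stability for Vietoris--Rips filtrations.

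The gap is the final step, which you correctly identify as the crux but then dispose of by citing machinery that does not apply as stated. You claim that the Blumberg--Lesnick framework rectifies a homotopy-\emph{commutative} $\epsilon$-interleaving into a strict $\epsilon$-interleaving of weakly equivalent replacements. Their rectification theorem applies to homotopy \emph{coherent} interleavings, i.e.\ to data consisting not merely of homotopies filling the two triangles but of a compatible system of all higher homotopies. For merely homotopy-commutative interleavings they prove only $d_{\mathrm{HC}}\le d_{\mathrm{HI}}$ and explicitly leave open whether the reverse inequality holds; but the reverse inequality is exactly what your last sentence requires, so as written your argument rests on an open problem rather than on a theorem. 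To close the gap you would either have to upgrade the contiguity homotopies to a coherent system (plausible, since contiguity is preserved under composition and the associated homotopies are canonical, but this requires a genuine argument, not a citation), or follow Blumberg--Lesnick's own route in \citep[Sec.~6.1]{blumberg2017universality}, which obtains the bound by constructing the weakly equivalent, strictly interleaved replacements directly from the correspondence rather than by rectifying homotopy-commutative data. By contrast, at the level of persistent homology your argument is already complete, since applying $\bfH_*$ turns contiguous maps into equal maps; the whole difficulty of the $\dhi$ statement lives in the step you skipped.
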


To prove Thm.~\ref{thm:main-stability}, we establish the following lemma:

\begin{lemma}
\label{lem:w.h.e.-per-inv}
Let $\invariant $ be a categorical invariant satisfying the condition that for any maps $ X\xrightarrow{f} Y\xrightarrow{g} Z\xrightarrow{h}W$ where $g$ is a weak homotopy equivalence, $\invariant (g\circ f)=\invariant (f)$ and $\invariant (h\circ g)=\invariant (h)$. If $\Xfunc\simeq \Xfunc'$, then $\invariant (\Xfunc)=\invariant (\Xfunc').$
\end{lemma}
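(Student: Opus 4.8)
The plan is to reduce the functor equality $\invariant(\Xfunc)=\invariant(\Xfunc')$ to a pointwise statement about transition maps and then route both sides through the apex of the span witnessing $\Xfunc\simeq\Xfunc'$. Since a functor between posets is determined by its values on objects, Defn.~\ref{def:p_invariant} tells us that $\invariant(\Xfunc)$ and $\invariant(\Xfunc')$ coincide if and only if $\invariant(f_a^b)=\invariant({f'}_a^b)$ for every interval $[a,b]$, where $f_a^b\colon X_a\to X_b$ and ${f'}_a^b\colon X'_a\to X'_b$ are the transition maps. By definition of $\Xfunc\simeq\Xfunc'$, there is a persistent space $\Zfunc$ with transition maps $z_a^b$ together with natural transformations $\varphi\colon\Zfunc\Rightarrow\Xfunc$ and $\psi\colon\Zfunc\Rightarrow\Xfunc'$, all of whose components are weak homotopy equivalences. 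I would establish $\invariant(f_a^b)=\invariant(z_a^b)$ and, by an identical argument, $\invariant({f'}_a^b)=\invariant(z_a^b)$; chaining these two equalities closes the lemma.

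First I would isolate the two consequences of the hypothesis that I need. Padding the chain $X\xrightarrow{f}Y\xrightarrow{g}Z\xrightarrow{h}W$ with identities and using $\invariant(\id_X)=\invariant(X)$, the condition says exactly that $\invariant$ is unchanged both under postcomposition with a weak homotopy equivalence and under precomposition with one. Next I would write the naturality square of $\varphi$ at $a\le b$, namely $f_a^b\circ\varphi_a=\varphi_b\circ z_a^b$. Reading the left-hand composite $Z_a\xrightarrow{\varphi_a}X_a\xrightarrow{f_a^b}X_b$, precomposition by the weak homotopy equivalence $\varphi_a$ gives $\invariant(f_a^b\circ\varphi_a)=\invariant(f_a^b)$; reading the equal right-hand composite $Z_a\xrightarrow{z_a^b}Z_b\xrightarrow{\varphi_b}X_b$, postcomposition by the weak homotopy equivalence $\varphi_b$ gives $\invariant(\varphi_b\circ z_a^b)=\invariant(z_a^b)$. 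As the two composites are literally equal, I conclude $\invariant(f_a^b)=\invariant(z_a^b)$. Repeating this verbatim with $\psi$ and ${f'}_a^b$ in place of $\varphi$ and $f_a^b$ yields $\invariant({f'}_a^b)=\invariant(z_a^b)$.

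Since the whole argument is a single naturality-square diagram chase, I do not anticipate a real obstacle; the only point demanding attention is bookkeeping, namely keeping $\varphi_a$ in the precomposition slot and $\varphi_b$ in the postcomposition slot of the hypothesis. One further remark I would make: the argument consumes only a single span, which is precisely what the definition of weak equivalence supplies here; should one instead read $\simeq$ as the equivalence relation generated by such spans, the same computation along each leg of the resulting zigzag, combined with transitivity of equality, still yields $\invariant(\Xfunc)=\invariant(\Xfunc')$.
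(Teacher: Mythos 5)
Your proposal is correct and follows essentially the same route as the paper's proof: a naturality-square chase transferring the invariant from $f_a^b$ to $z_a^b$ via precomposition with $\varphi_a$ and postcomposition with $\varphi_b$, then repeating with $\psi$. (Your version is in fact slightly more carefully typed than the paper's, which writes the composite $\varphi_s\circ g_t^s$ in the wrong order.)
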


We apply the Lem.~\ref{lem:w.h.e.-per-inv} and Thm.~\ref{thm:stab-per-inv} to prove Thm.~\ref{thm:main-stability}, which states that certain categorical weak homotopy invariant is stable under the homotopy-interleaving distance between persistent spaces.

\begin{proof}[Proof of Thm.~\ref{thm:main-stability}] 
Let $\Xfunc,\Yfunc:(\bbR,\leq)\to\Top$ be two persistent spaces. For any pair $\Xfunc',\Yfunc':(\bbR,\leq)\to\Top$ of persistent spaces such that $\Xfunc'\simeq \Xfunc$ and  $\Yfunc'\simeq \Yfunc$, we have
$$ d_{\mathrm{E}}(\invariant (\Xfunc),\invariant (\Yfunc))=d_{\mathrm{E}}\left(\invariant (\Xfunc'),\invariant (\Yfunc')\right)\leq \di^{\Top}(\Xfunc',\Yfunc'),$$
where the leftmost equality follows from Lem.~\ref{lem:w.h.e.-per-inv} and the rightmost inequality follows from Thm.~\ref{thm:stab-per-inv}.
Thus, Eqn.~(\ref{eq:dE-dHI}) follows.

For the case of Vietoris-Rips filtrations of metric spaces, the statement follows from Prop.~\ref{prop:dh-dgh}.
\end{proof}

\begin{proof}[Proof of Lem.~\ref{lem:w.h.e.-per-inv}]
Since $\Xfunc'\simeq \Xfunc$, there exists a persistent space $\Zfunc:(\bbR,\leq)\to\Top$ and natural transformations $\Xfunc\xLeftarrow{\varphi}\Zfunc\xRightarrow{\psi}\Xfunc'$, such that for each $t\in\bbR$, the maps $\varphi_t: Z_t\to X_t$ and $\psi_t: Z_t\to X'_t$ are weak homotopy equivalences. We claim that $\invariant (\Xfunc)=\invariant (\Zfunc)$, i.e. for any $t\leq s$, $\invariant (\Xfunc)([t,s])=\invariant (\Zfunc)([t,s]).$ Indeed, for the following commutative diagram:
	\begin{center}
		\begin{tikzcd}[column sep = 4em]
		 Z_t \ar[r,"g_t^s"]
		 \ar[d,"\simeq" right, "\varphi_t" left]
        &
         Z_s
        \ar[d,"\simeq" left, "\varphi_s" right]
        \\
		  X_t \ar[r,"f_t^s" below]
        &
         X_s,
		\end{tikzcd}
	\end{center} 
because $\varphi_t$ and $\varphi_s$ are weak homotopy equivalence, we have
\[\invariant (f_t^s)=\invariant (f_t^s\circ\varphi_t)=\invariant (g_t^s\circ\varphi_s)=\invariant (g_t^s).\]
\end{proof}

We prove Cor.~\ref{cor:stab-cup} and Cor.~\ref{cor:stab-cat}:

\begin{proof}[Proof of Cor.~\ref{cor:stab-cup}] 
It suffices to show that weak homotopy equivalence preserves cohomology algebras. Indeed, let $g: Y\to  Z$ be a weak homotopy equivalence. By \citep[Prop.~4.21]{hatcher2000}, the map $g$ induces a graded linear isomorphism $\bfH^*(g):\bfH^*( Z) \to\bfH^*( Y) $. On the other hand, the induced map $\bfH^*(g)$ preserves the cup product operation. Thus, $\bfH^*(g)$ is a graded algebra isomorphism, and it follows that
\[\cupprod(g\circ f) = \len(\image(\bfH^*(g)\circ \bfH^*(f))) =\len(\image( \bfH^*(f))) =\cupprod(f),\]
and similarly $\cupprod(h\circ g)=\cupprod(h).$
\end{proof}

\begin{proof}[Proof of Cor.~\ref{cor:stab-cat}] 
The Whitehead theorem states that for CW complexes weak homotopy equivalences are homotopy equivalences. It follows from Prop.~\ref{prop:property-cat(f)} (\ref{prop: cat of h.e.}) that the LS-category is an invariant satisfying the required condition. Indeed, for any maps $ X\xrightarrow{f} Y\xrightarrow{g} Z\xrightarrow{h}W$ where $g$ is a weak homotopy equivalence of CW complexes (and thus a homotopy equivalence), we have 
\[g\circ f\sim f\implies \cat(g\circ f) = \cat(f)\]
and similarly $h\circ g\sim h\implies \cat(h\circ g) = \cat(h).$
\end{proof}

The strength of the persistent cup-length invariant at discriminating filtrations has been demonstrated by several examples in \citep{contessoto_et_al:LIPIcs.SoCG.2022.31}. 
In particular, the following example was stated there without proof. Here, we provide detailed proofs for the case of the persistent cup-length invariant, and also extend it to the case of the persistent LS-category invariant. In Rmk.~\ref{rmk:inter-torus-wedge}, we compute the interleaving distance between the persistent homology of these two spaces and see that the persistent cup-length (or LS-category) invariant provides a better approximation of the Gromov-Hausdorff distance of the two spaces than persistent homology. 

The wedge sum $X\vee_{x_0\sim y_0} Y$ (in short, $X\vee Y$) of two $(X,x_0)$ and $(Y,y_0)$ is the quotient space of the disjoint union of $X$ and $Y$ by the identification of basepoints $x_0\sim y_0$. 
Recall from \citep{burago2001course,adamaszek2020homotopy} that the \emph{gluing metric} on $X\vee Y$ is given by
\label{para:gluing}
	$$d_{X\vee Y}(x,y):= d_X(x,x_0)+d_Y(y,y_0),\forall x\in X, y\in Y$$ 
and $d_{X\vee Y}\vert_{X\times X}=d_X,d_{X\vee Y}\vert_{Y\times Y}=d_Y$.

\begin{example}[$\VR(\bbT^2)$ v.s. $\VR(\bbS^1\vee\bbS^2\vee\bbS^1)$]
\label{ex:T2-wedge}
Let the $2$-torus $\bbT^2=\bbS^1\times \bbS^1$ be the $\ell_\infty$-product of two unit geodesic circles.
Let $\bbS^2$ be the unit $2$-sphere, equipped with the geodesic distance, and denote by $\bbS^1\vee \bbS^2\vee \bbS^1$ the wedge sum equipped with the gluing metric.
Using the characterization of Vietoris-Rips complex of $\bbS^1$ given by \citep{adamaszek2017vietoris}, we obtain the persistent cup-length invariant of $\VR(\bbS^1)$. Combined with Prop.~\ref{prop:prod-coprod-pers-cup}, we obtain the persistent cup-length invariant of $\VR(\bbT^2)$ (see Fig.~\ref{fig:torus_sphere_VR}): for any interval $[a,b]$,
\begin{align*}
    \cupprod\left(\VR(\bbT^2)\right)([a,b])=
      \begin{cases}
      2, &\mbox{ if $[a,b] \subset \left(\tfrac{l}{2l+1}2\pi ,\tfrac{l+1}{2l+3}2\pi \right)$ for some $ l=0,1,\dots$}\\
      0, &\mbox{ otherwise.}
      \end{cases}
\end{align*}
Via a similar discussion, we obtain the persistent LS-category invariant of $\VR(\bbT^2)$ which turns out to be the same as $\cupprod\left(\VR(\bbT^2)\right)$. Indeed, $\VR(\bbT^2)$ consists of the homotopy types of even-dimensional torus, and for any $n$, the LS-category of an $n$-dimensional torus is $2$ the same as its cup-length.

\begin{figure}[H]
\centering
    \begin{tikzpicture}[scale=0.6]
    \begin{axis} [ 
    ticklabel style = {font=\Large},
    axis y line=middle, 
    axis x line=middle,
    ytick={0.5,0.67,0.95},
    yticklabels={$\tfrac{\pi}{2}$, $\tfrac{2\pi}{3}$,$\pi$},
    xtick={0.5,0.67,0.95},
    xticklabels={$\tfrac{\pi}{2}$,$\tfrac{2\pi}{3}$, $\pi$},
    xmin=0, xmax=1.1,
    ymin=0, ymax=1.1,]
   \addplot [mark=none,color=dgmcolor!40!white] coordinates {(0,0) (1,1)};
    \addplot [thick,color=dgmcolor!40!white,fill=dgmcolor!40!white, 
                    fill opacity=0.45]coordinates {
            (0,.67) 
            (0,0)
            (.67,.67)
            (0,.67)};
    \addplot [thick,color=dgmcolor!40!white,fill=dgmcolor!40!white, 
                    fill opacity=0.45]coordinates {
            (0.67,.8) 
            (.67,.67)
            (.8,.8)
            (0.67,.8)};
    \addplot [thick,color=dgmcolor!40!white,fill=dgmcolor!40!white, 
                    fill opacity=0.45]coordinates {
            (0.67,.8) 
            (.67,.67)
            (.8,.8)
            (0.67,.8)};
    \addplot [thick,color=dgmcolor!40!white,fill=dgmcolor!40!white, 
                    fill opacity=0.45]coordinates {
            (0.8,.857) 
            (.8,.8)
            (.857,.857)
            (0.8,.857)};
    \addplot [thick,color=dgmcolor!40!white,fill=dgmcolor!40!white, 
                    fill opacity=0.45]coordinates {
            (0.857,.89) 
            (.857,.857)
            (.89,.89)
            (0.857,.89) };
    \addplot [thick,color=dgmcolor!40!white,fill=dgmcolor!40!white, 
                    fill opacity=0.45]coordinates {
            (0.89,.95) 
            (.89,.89)
            (.95,.95)
            (0.89,.95) };
    \node[mark=none] at (axis cs:.74,.76){\tiny{\textsf{2}}};
    \node[mark=none] at (axis cs:.25,.45){\textsf{2}};
    \end{axis}
    \end{tikzpicture}
    \hspace{1.5cm}
    \begin{tikzpicture}[scale=0.6]
    \begin{axis} [ 
    ticklabel style = {font=\Large},
    axis y line=middle, 
    axis x line=middle,
    ytick={0.5,0.67,0.95},
    yticklabels={$\tfrac{\pi}{2}$,$\tfrac{2\pi}{3}$,$\pi$},
    xtick={0.5,0.6,0.67,0.95},
    xticklabels={$\tfrac{\pi}{2}$,$\zeta_2$, $\tfrac{2\pi}{3}$, $\pi$},
    xmin=0, xmax=1.1,
    ymin=0, ymax=1.1,]
    \addplot [mark=none] coordinates {(0,0) (1,1)};
    \addplot [thick,color=dgmcolor!20!white,fill=dgmcolor!20!white, 
                    fill opacity=0.45]coordinates {
            (0,0.6)
            (0,0)
            (0.6,0.6)
            (0,0.6)};
    \addplot [thick,color=black!10!white,fill=black!10!white, 
                    fill opacity=0.4]coordinates {
            (0.6,0.95)
            (0.6,0.6)
            (0.95,0.95)
            (0.6,0.95)};         
    \node[mark=none] at (axis cs:.25,.45){\textsf{1}};
    \end{axis}
    \end{tikzpicture}
\caption{The persistent invariants $\invariant(\VR(\bbT^2))$ (left) and $\invariant(\VR(\bbS^1\vee\bbS^2\vee\bbS^1))\vert_{(0,\zeta_2)}$ (right), respectively, for $\invariant=\cupprod$ or $\cat$. Here, $\zeta_2=\arccos(-\tfrac{1}3)\approx 0.61\pi$.} 
\label{fig:torus_sphere_VR}
\end{figure}

For the persistent cup-length invariant of $\VR(\bbS^1\vee \bbS^2\vee \bbS^1)$, recall \citep[Prop.~3.7]{adamaszek2020homotopy}: the Vietoris-Rips complex of a metric gluing is the wedge sum of Vietoris-Rips complexes. Applying Prop.~\ref{prop:prod-coprod-pers-cup}, we have for any interval $[a,b]$,
\begin{align*}
    \cupprod\left(\VR(\bbS^1\vee \bbS^2\vee \bbS^1)\right)([a,b])
    =\max \left\{ \cupprod(\VR(\bbS^1))([a,b]),\cupprod(\VR(\bbS^2))([a,b])\right\} 
\end{align*}
We now compute $\cupprod(\VR(\bbS^2))$. 
For any $r\geq \pi=\diam(\bbS^2)$, $\VR_r(\bbS^2)$ is contractible. For any $r\in (0,\zeta_2)$, where $\zeta_2:=\arccos(-\tfrac{1}3)\approx 0.61\pi$, it follows from \citep[Thm.~10]{lim2020vietoris} that $\VR_r(\bbS^2)$ is homotopy equivalent to $\bbS^2$. Thus, $\cupprod(\VR(\bbS^2))([a,b])=1,\forall [a,b]\subset(0,\zeta_2)$, implying
\[\cupprod(\VR(\bbS^1\vee \bbS^2\vee \bbS^1))([a,b])=1,\forall [a,b]\subset(0,\zeta_2).\]
Again, it is not difficult to see that $\cat(\VR(\bbS^1\vee \bbS^2\vee \bbS^1))=\cupprod(\VR(\bbS^1\vee \bbS^2\vee \bbS^1))$ when restricted to the interval $(0,\zeta_2).$

Due to the current lack of knowledge about the homotopy types of $\VR_r(\bbS^2)$ for $r$ close to $\pi$, we are not able to completely characterize the  function $\cupprod(\VR(\bbS^2))$, nor $\cupprod(\VR(\bbS^1\vee \bbS^2\vee \bbS^1))$. However, despite this, we are still able to exactly evaluate the erosion distance of $\cupprod(\VR(\bbS^1\vee \bbS^2\vee \bbS^1))$ and $\cupprod(\VR(\bbT^2))$, as in Prop.~\ref{prop:erosion-comp}. 

\end{example}

\begin{restatable}{proposition}{torusvswedge} 
\label{prop:erosion-comp} Let $\invariant=\cupprod$ or $\cat$.
For the $2$-torus $\bbT^2$ and the wedge sum space $\bbS^1\vee \bbS^2\vee \bbS^1$, 
\[\tfrac{\pi}{3}=d_{\mathrm{E}}\left(\invariant(\VR(\bbT^2)),\invariant(\VR(\bbS^1\vee\bbS^2\vee\bbS^1))\right)\leq 2\cdot d_{\mathrm{GH}}\left(\bbT^2,\bbS^1\vee\bbS^2\vee\bbS^1\right).\]
\end{restatable}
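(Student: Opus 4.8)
The inequality $\de(\dots)\le 2\dgh(\dots)$ is not the substance here: it is an immediate instance of Eqn.~(\ref{eq:dE-dGH}) in Thm.~\ref{thm:main-stability}, applied through Cor.~\ref{cor:stab-cup} when $\invariant=\cupprod$ and through Cor.~\ref{cor:stab-cat} when $\invariant=\cat$. The real work is the \emph{exact} value of the erosion distance. Write $\bfJ_1:=\invariant(\VR(\bbT^2))$ and $\bfJ_2:=\invariant(\VR(\bbS^1\vee\bbS^2\vee\bbS^1))$ as functions $\Int\to\N$ and unwind Defn.~\ref{def:de}, taking $\invariant=\cupprod$ first. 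I would record the two structural facts that drive everything. (F1) By Ex.~\ref{ex:T2-wedge}, $\bfJ_1$ takes values in $\{0,2\}$ and equals $2$ exactly on the disjoint bands $B_l=\big(\tfrac{l}{2l+1}2\pi,\tfrac{l+1}{2l+3}2\pi\big)$, whose diameters $\tfrac{2\pi}{(2l+1)(2l+3)}$ are maximized by $\diam B_0=\tfrac{2\pi}{3}$. (F2) By Prop.~\ref{prop:prod-coprod-pers-cup}, $\bfJ_2=\max\{\cupprod(\VR(\bbS^1)),\cupprod(\VR(\bbS^2))\}$; it equals $1$ on all of $(0,\zeta_2)$, and the set $\{\bfJ_2\ge2\}$ is contained in $\{[a,b]:\zeta_2\le a\le b<\pi\}$, a region of diameter $\pi-\zeta_2<\tfrac{2\pi}{3}$. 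This containment holds because $\VR_a(\bbS^2)\simeq\bbS^2$ has cup-length $\le1$ for $a<\zeta_2$, forcing the image ring to have cup-length $\le1$, while $\VR_b(\bbS^2)$ is contractible for $b\ge\pi$; the same reasoning confines every $\bbS^2$-contribution to $\{\bfJ_2\ge1\}$, apart from the ``main triangle'' $\{0<a\le b<\delta\}$ of the surviving degree-$2$ class (here $\zeta_2\le\delta\le\pi$), to the strip $\{\zeta_2\le a\le b<\pi\}$.

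For the lower bound I would show the pair is not $\epsilon$-eroded for any $\epsilon<\tfrac{\pi}{3}$. Since $\epsilon<\tfrac{\pi}{3}<\zeta_2$, the interval $\big(\epsilon,\min\{\zeta_2,\tfrac{2\pi}{3}-\epsilon\}\big)$ is nonempty; choosing $t$ in it gives $[t-\epsilon,t+\epsilon]\subset B_0$, so $\bfJ_1([t-\epsilon,t+\epsilon])=2$, while $\bfJ_2([t,t])=1$. This violates $\bfJ_2([t,t])\ge\bfJ_1([t-\epsilon,t+\epsilon])$, hence $\de(\bfJ_1,\bfJ_2)\ge\tfrac{\pi}{3}$. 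Crucially, this step uses only the \emph{known} values of $\bfJ_1$ on $B_0$ and of $\bfJ_2$ on $(0,\zeta_2)$.

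For the upper bound I would verify the pair is $\tfrac{\pi}{3}$-eroded. Set $\epsilon=\tfrac{\pi}{3}$, so $2\epsilon=\tfrac{2\pi}{3}$ and every enlarged interval $[a-\epsilon,b+\epsilon]$ has length $\ge\tfrac{2\pi}{3}$; such an interval cannot sit inside any open band $B_l$ (diameter $\le\tfrac{2\pi}{3}$) nor inside the strip $\{\zeta_2\le\cdot<\pi\}$ (diameter $<\tfrac{2\pi}{3}$). Consequently $\bfJ_1([a-\epsilon,b+\epsilon])=0$ for all $[a,b]$, so $\bfJ_2([a,b])\ge\bfJ_1([a-\epsilon,b+\epsilon])$ holds trivially; and $\bfJ_2([a-\epsilon,b+\epsilon])\le1$, the value $1$ being attainable only via the main $\bbS^2$-triangle (the $\bbS^1$-bands and the extra $\bbS^2$-strip being too narrow to host the enlarged interval), in which case $b+\epsilon<\delta\le\pi$ and $a-\epsilon>0$ force $[a,b]\subset(0,\tfrac{2\pi}{3})=B_0$, whence $\bfJ_1([a,b])=2\ge\bfJ_2([a-\epsilon,b+\epsilon])$. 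Both eroded inequalities hold, giving $\de(\bfJ_1,\bfJ_2)\le\tfrac{\pi}{3}$, and equality follows.

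The case $\invariant=\cat$ runs identically: Ex.~\ref{ex:T2-wedge} gives $\cat(\VR(\bbT^2))=\cupprod(\VR(\bbT^2))$ (products of odd spheres have LS-category equal to cup-length), and Prop.~\ref{prop:property-cat(f)}(\ref{prop:cat(f) v.s. cat(X)}) yields the same confinement of $\{\cat(\bfJ_2)\ge2\}$, since $\cat(\VR(\bbS^1\vee\bbS^2\vee\bbS^1))([a,b])\le\cat(\VR_a(\bbS^1\vee\bbS^2\vee\bbS^1))\le1$ for $a<\zeta_2$ and is $0$ past the contractibility scale. The genuine obstacle throughout is the lack of knowledge of the homotopy types of $\VR_r(\bbS^2)$ for $r\in[\zeta_2,\pi)$; the plan sidesteps it by observing that this ignorance is confined to a region of diameter $\pi-\zeta_2<\tfrac{2\pi}{3}=2\epsilon$, which is exactly too narrow to be seen at erosion scale $\tfrac{\pi}{3}$, so the undetermined part of $\bfJ_2$ never enters either eroded inequality.
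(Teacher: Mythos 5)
Your proof is correct and follows essentially the same route as the paper's: the lower bound uses the same witness interval near $(\tfrac{\pi}{3},\tfrac{\pi}{3})$, where $\bfJ_2=1$ while $\bfJ_1$ of its $\epsilon$-enlargement equals $2$, and the upper bound rests on the same observation that an enlarged interval of length at least $\tfrac{2\pi}{3}$ cannot fit inside any region where either invariant is positive. If anything, your handling of the undetermined range $[\zeta_2,\pi)$ for $\VR_r(\bbS^2)$ is slightly more careful than the paper's, which asserts that every bar other than $(0,\zeta_2)$ lies in $[\zeta_2,\pi)$ and concludes $\bfJ_2(I^\epsilon)=0$ outright, whereas you allow the degree-$2$ class to persist to an unknown $\delta\leq\pi$ and close that case by checking $\bfJ_1([a,b])=2$ there.
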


\begin{proof}
\label{pf:erosion-comp}
For  simplicity of notation, we denote 
\[\bfJ_{\times}:=\invariant(\VR(\bbT^2))\text{ and }\bfJ_{\vee}:=\invariant(\VR(\bbS^1\vee\bbS^2\vee\bbS^1)).\] 
For an interval $I=[a,b]$ and $\epsilon>0$, we denote $I^{\epsilon}:=[a-\epsilon,b+\epsilon]$.

Suppose that $\bfJ_{\times}$ and $\bfJ_{\vee}$ are $\epsilon$-eroded, which means $\bfJ_{\times}(I)\geq \bfJ_{\vee}(I^\epsilon)$ and $\bfJ_{\vee}(I)\geq \bfJ_{\times}(I^\epsilon)$, for all $I\in\Int$. We take $I_0:=[\tfrac{\pi}{3}-\delta,\tfrac{\pi}{3}+\delta]$ for $\delta$ sufficiently small, so that the associated point of $I_0$ in the upper-diagonal half plane is very close to the point $(\tfrac{\pi}{3},\tfrac{\pi}{3})$. Then, we have 
\[\bfJ_{\vee}(I_0)= 1\leq \bfJ_{\times}(I_0^t)=2\text{, for any }t<\tfrac{\pi}{3}-2\delta.\]
Therefore, in order for the inequality $\bfJ_{\vee}(I_0)\geq \bfJ_{\times}(I_0^\epsilon)$ to hold, it must be true that $\epsilon\geq \tfrac{\pi}{3},$ implying that $d_{\mathrm{E}}(\bfJ_{\times},\bfJ_{\vee})\geq\tfrac{\pi}{3}.$

Next, we prove the inverse inequality $d_{\mathrm{E}}(\bfJ_{\times},\bfJ_{\vee})\leq\tfrac{\pi}{3}$. Fix an arbitrary $\epsilon>\tfrac{\pi}{3}$. We claim that $\bfJ_{\vee}(I^\epsilon)=0$ for all $I\in \Int$. As before, let $\zeta_2:=\arccos\left(-\tfrac{1}{3}\right)\leq\tfrac{2\pi}{3}$. Notice that the longest possible bar in the barcode for $\VR(\bbS^1\vee\bbS^2\vee\bbS^1)$ is $(0,\pi)$, and any bar $I'$ in the barcode, except for the bar $(0,\zeta_2)$, is a sub-interval of $[\zeta_2,\pi)$. Thus, the length of $I'$ is less than or equal to $(\pi-\zeta_2)<\zeta_2<2\epsilon$. For any $I\in \Int$, because the interval $I^\epsilon$ has length larger than $2\epsilon$, it cannot be contained in a bar from the barcode. Thus, $\bfJ_{\vee}(I^\epsilon)=0$. We can directly check that a similar claim holds for $\bfJ_{\times}$ as well, i.e. $\bfJ_{\times}(I^\epsilon)=0$ for all $I\in \Int$. Therefore, for any $I\in \Int$,
\[ \bfJ_{\times}(I)\geq \bfJ_{\vee}(I^\epsilon)=0\text{  and  }\bfJ_{\vee}(I)\geq \bfJ_{\times}(I^\epsilon)=0.\]
In other words, $\bfJ_\times$ and $\bfJ_\vee$ are $\epsilon$-eroded, for any $\epsilon>\tfrac{\pi}{3}$. Thus, $d_{\mathrm{E}}(\bfJ_{\times},\bfJ_{\vee})\leq\tfrac{\pi}{3}.$
\end{proof}

\begin{remark} \label{rmk:inter-torus-wedge}
Denote by $\bfH _*\left( \cdot\right)$ the persistent homology functor in all dimensions. Then,
\begin{itemize}
    \item $\bfH _*\left( \VR(\bbT^2) \right)\big\vert_{(0,\zeta_2)}\cong \bfH _*\left( \VR(\bbS^1\vee \bbS^2\vee \bbS^1)\right)\big\vert_{(0,\zeta_2)}$, and
    \item $\bfH _*\left( \VR(\bbT^2) \right)\big\vert_{(\pi,\infty)}= \bfH _*\left( \VR(\bbS^1\vee \bbS^2\vee \bbS^1)\right)\big\vert_{(\pi,\infty)}$ is trivial
\end{itemize}
Thus, the interleaving distance $\di$ between persistent homology of $\bbT^2$ and $\bbS^1\vee \bbS^2\vee \bbS^1$ in any dimension $p$ satisfies
\[ \di\left( \bfH _p\left( \VR(\bbT^2) \right), \bfH _p\left( \VR(\bbS^1\vee \bbS^2\vee \bbS^1)\right) \right)\leq \tfrac{\pi-\zeta_2}{2}<\tfrac{\pi}{3}.\]

By providing a better bound for the Gromov-Hausdorff distance than the one given by persistent homology, the persistent cup-length (or LS-category) invariant demonstrates its  strength in terms of discriminating spaces and capturing additional important topological information.
\end{remark}

\section{Persistent cup modules: \texorpdfstring{$\ell$}{ell}-fold product of persistent cohomology}\label{sec:cup module}

For the purpose of extracting more information from the cohomology ring structure, we study the $\ell$-fold product of the persistent cohomology algebra for any positive integer $\ell\geq 1$, and prove in Prop.~\ref{prop:dgm_p,l} that the persistent cup-length invariant can be retrieved from the persistence diagram of the $\ell$-fold products. Also, we establish the stability of $\ell$-fold products of persistent cohomology (see Thm.~\ref{thm:stability dE_dB_dGH}).

Given any graded algebra $( A,+,\bullet)$, let $( A^{\ell},+,\bullet)$ denote the graded subalgebra of $ A$ generated by the elements $\{a_1\bullet\cdots\bullet a_\ell:a_i\in  A\}$.
For a graded algebra morphism $f: A\to  B$, let $f^{\ell}: A^{\ell}\to  B^{\ell}$ be the morphism such that $a_1\bullet\cdots\bullet a_\ell\mapsto f(a_1)\bullet\cdots\bullet f(a_\ell),$ which is indeed the restriction $f\vert_{ A^{\ell}}$. Let $\Afunc$ be a persistent graded algebra, i.e. a functor from the poset category $(\bbR,\leq)$ to $\Alg^{\mathrm{op}}$ the opposite category graded algebras. 
We define the \emph{$\ell$-fold product functor} to be
$(\cdot)^{\ell}:\Alg^{\mathrm{op}}\to \Alg^{\mathrm{op}}$ with $ A\mapsto  A^{\ell}$ and $f\mapsto f^{\ell}.$ Notice that when $ A$ has the multiplication identity, $ A= A^{\ell}$ for any $\ell$. To obtain more interesting objects, we will consider the subalgebra $ A^{ + }$ of $ A$ which consists of positive-degree elements in $ A$, and study the $\ell$-fold product of $ A^{ + }$, instead. 
\label{para:graded algebra}

Given $p\in \N$, let $\deg_p(\cdot)$ be the degree-$p$ component of a graded vector space. For instance, $$\deg_p(\bfH^*( X))=\bfH^p( X)$$ is the $p$-th cohomology of a topological space $ X$. For any $\ell\geq 1$ and a graded algebra $A$, we will write $\deg_p\left(\left(A^{ + }\right)^\ell\right)$ to extract the degree-$p$ component of the $\ell$-fold product of $A^{ + }$.  

Let $\bfH^{ + }:\Top\to \Alg^{\mathrm{op}}$ denote the positive-degree cohomology algebra functor, i.e.  $$\bfH^{ + }:=\bigoplus_{p>0}\bfH^p.$$

Throughout this section, we will assume that persistent spaces  have $\mathrm{q}$-tame persistent (co)homology (see pg.~\pageref{para:q-tame}). Examples of such persistent spaces include Vietoris-Rips filtrations of totally bounded metric spaces, cf. \citep[Prop.~5.1]{chazal2014persistence}.

In Sec.~\ref{sec:flag cup module}, we first study the category of (graded) flags and a complete invariant for it, which we call the \emph{rank invariant} and denote as $\rank(\cdot)$, as well a, the M\"obius inversion of $\rank(\cdot)$.  In Sec.~\ref{sec:p.cup module}, we introduce the notion of \emph{persistent cup modules} $\cupmodule(\cdot)$ (see Defn. \ref{def:cup module}) and \emph{persistent $\ell$-cup modules} $\cupmodule^\ell(\cdot)$ (see Defn. \ref{def:l-cup module}), which are persistent graded flags and persistent graded vector spaces, respectively. Let $\de$ be the erosion distance between persistent invariants (see Definition \ref{def:de}), and $\db$ be the \emph{bottleneck distance} between barcodes (see \citep[Defn.~3.1]{cohen2007stability}). We establish the following stability result for persistent cup modules and persistent $\ell$-cup modules:

\cupmodulestab* 

\ellpbarcode*

We provide a concrete example in Sec.~\ref{sec:flag>cup} to compare the performance of different persistent invariants, including persistent homology, persistent cup-length invariant, persistent LS-category, persistent cup modules and persistent $\ell$-cup modules. In particular, we show that persistent ($\ell$)-cup modules sometimes have stronger distinguishing powers than other invariants.

In Sec.~\ref{sec:2d cup module}, we see that persistent cup modules also have the structure of 2-dimensional persistence modules. 

\subsection{Persistent cup module as a persistent graded flag}\label{sec:flag cup module}

In Sec.~\ref{sec:flag}, we recall the notion of flags of vector spaces over the base field $\field$ (see Defn.~\ref{def:flag}) and study the decomposition of flags. 
Let $\Flag$ denote the category of finite-depth flags, and let $\Flag_{\operatorname{fin}}$ be the full subcategory of $\Flag$ consisting of flags of finite-dimensional vector spaces which we will refer to as finite dimensional flags for simplicity.
Let $\mathfrak{J}(\Flag_{\operatorname{fin}})$ denote the commutative monoid (under the direct sum) of isomorphism classes of elements in $\Flag_{\operatorname{fin}}$, and let $\mathfrak{A}(\Flag_{\operatorname{fin}})$ be the \emph{Grothendieck group} of $\Flag_{\operatorname{fin}}$, defined as the group completion of $\mathfrak{J}(\Flag_{\operatorname{fin}})$.

\medskip
Below, we will establish:

\begin{proposition}
\label{prop:decomposition of Flag}
The category $\Flag_{\operatorname{fin}}$ is Krull-Schmidt, and in particular,
\[\mathfrak{J}(\Flag_{\operatorname{fin}})
=\{(m_1,\dots,m_n,0,\dots)\in \mathbb{N}^\infty: m_1\geq \cdots\geq m_n\geq 0\}.\]
\end{proposition}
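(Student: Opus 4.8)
The plan is to pin down the indecomposable objects explicitly, show that every flag is a finite direct sum of them, and then read off both assertions. For each $k\geq 1$ let $I_k$ be the \emph{interval flag} with $(I_k)_j=\field$ for $1\leq j\leq k$ and $(I_k)_j=0$ for $j>k$, all transition inclusions being identities wherever possible. First I would record that $\Flag_{\operatorname{fin}}$ is additive: the zero flag is a zero object, the componentwise direct sum $(V_\star\oplus W_\star)_j:=V_j\oplus W_j$ is a biproduct, and hom-sets (flag morphisms are $\field$-linear maps $V_1\to W_1$ carrying $V_j$ into $W_j$) are $\field$-vector spaces with bilinear composition. A direct computation then gives $\operatorname{End}(I_k)\cong\field$, since such an endomorphism is determined by a single scalar acting on $(I_k)_1=\field$; hence each $\operatorname{End}(I_k)$ is local and each $I_k$ is indecomposable. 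Note also that $I_k\not\cong I_{k'}$ for $k\neq k'$, as their dimension sequences differ.

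The central step is existence of the decomposition. Given a finite-depth flag $V_\star$ of finite-dimensional spaces with $\dimension(V_j)=m_j$, the nesting $V_1\supseteq V_2\supseteq\cdots$ forces $m_1\geq m_2\geq\cdots$ with $m_j=0$ for $j$ large. I would build a basis of $V_1$ adapted to the flag by choosing a basis of the smallest nonzero $V_n$ and successively extending it to bases of $V_{n-1},\dots,V_1$; grouping the basis vectors by the level at which each first appears yields an explicit isomorphism
\[
V_\star \;\cong\; \bigoplus_{k\geq 1} I_k^{\oplus(m_k-m_{k+1})},
\]
with the convention $m_{k+1}=0$ past the depth. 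The exponents $m_k-m_{k+1}$ are non-negative precisely because the dimension sequence is non-increasing, and the finite-depth hypothesis makes the sum finite.

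With existence established, every object of $\Flag_{\operatorname{fin}}$ is a finite direct sum of objects with local endomorphism rings, so by Azumaya's theorem such a decomposition is unique up to permutation and isomorphism; by definition this makes $\Flag_{\operatorname{fin}}$ Krull--Schmidt. For the monoid identity I would consider the map $\fdim$ sending an isomorphism class to its dimension sequence. It is well-defined (isomorphic flags have equal dimensions), additive over $\oplus$, and by the nesting condition it lands in $\{(m_1,\dots,m_n,0,\dots):m_1\geq\cdots\geq m_n\geq 0\}$. Surjectivity is witnessed by the standard flags realizing a prescribed sequence, and injectivity is immediate from the decomposition above, since two flags with the same dimension sequence are each isomorphic to the same $\bigoplus_k I_k^{\oplus(m_k-m_{k+1})}$. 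Thus $\fdim$ is a monoid isomorphism onto the indicated set, which is the claimed equality.

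The only genuinely nontrivial point is the adapted-basis construction underlying the explicit decomposition; everything else is formal. I expect the cleanest route there is a downward induction on the depth $n$, taking care that compatibility of the chosen basis with the filtration is what makes the block decomposition into the $I_k$ valid.
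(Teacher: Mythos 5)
Your proposal is correct and follows essentially the same route as the paper: identify the interval flags ($I_k=\field_\star^k$) as the indecomposables and decompose an arbitrary finite-depth flag into them with multiplicities $m_k-m_{k+1}$ read off from the dimension sequence, the paper doing this by iteratively peeling off one $\field_\star^n$ while you do it in one step via an adapted basis. Your explicit verification that $\operatorname{End}(I_k)\cong\field$ is local, with Azumaya supplying uniqueness of the decomposition, is a slightly more careful treatment of the Krull--Schmidt property than the paper's (which argues indecomposability by a direct dimension count and leaves uniqueness to the completeness of $\fdim$), but the substance is the same.
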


Since $\Flag_{\operatorname{fin}}$ is additive and Krull-Schmidt, its Grothendieck group $\mathfrak{A}(\Flag_{\operatorname{fin}})$ is the free abelian group generated by the set of isomorphism classes of indecomposables, see \citep[page 10]{patel2018generalized}. By Prop.~\ref{prop:decomposition of Flag}, we have the following corollary:
\begin{corollary}\label{cor:grothendieck group of flag}
The Grothendieck group of $\Flag_{\operatorname{fin}}$ is $\mathfrak{A}(\Flag_{\operatorname{fin}})\cong \bbZ^\infty.$
\end{corollary}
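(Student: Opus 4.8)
The plan is to invoke the general fact already recorded in the paragraph preceding the statement: since $\Flag_{\operatorname{fin}}$ is additive and Krull--Schmidt (Prop.~\ref{prop:decomposition of Flag}), its Grothendieck group $\mathfrak{A}(\Flag_{\operatorname{fin}})$ is the free abelian group on the set of isomorphism classes of indecomposable objects. Thus the entire task reduces to two things: identifying the indecomposable finite-dimensional flags and checking that there are countably infinitely many of them.

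First I would identify the indecomposables explicitly. For each $k\in\N^+$, let $E_k$ denote the \emph{elementary flag} whose dimension vector is $(1,\dots,1,0,\dots)$ with exactly $k$ leading $1$'s; concretely $E_k$ is the flag $\field\supseteq\cdots\supseteq\field\supseteq 0\supseteq\cdots$ with $k$ copies of $\field$ connected by identity maps. I claim these are, up to isomorphism, precisely the indecomposables. Indeed, by Prop.~\ref{prop:decomposition of Flag} an isomorphism class in $\Flag_{\operatorname{fin}}$ is determined by a non-increasing sequence $(m_1\geq m_2\geq\cdots\geq m_n\geq 0)$ with $m_i=\dimension(V_i)$, and direct sums add dimension vectors degreewise. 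Setting $a_k:=m_k-m_{k+1}\geq 0$ (with $m_{n+1}=0$), the dimension vector of $\bigoplus_{k\geq 1}E_k^{\oplus a_k}$ is $\big(\sum_{k\geq i}a_k\big)_i=(m_i)_i$, so every flag is isomorphic to such a finite direct sum. It remains to see each $E_k$ is indecomposable: if $E_k\cong A\oplus B$ then the first-coordinate relation $\dimension(A_1)+\dimension(B_1)=1$ forces one summand, say $B$, to satisfy $\dimension(B_1)=0$; monotonicity of a flag then gives $B_j=0$ for all $j$, i.e.~$B=0$.

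This establishes a bijection between isomorphism classes of indecomposables and $\N^+$, a countably infinite set. Therefore $\mathfrak{A}(\Flag_{\operatorname{fin}})$ is the free abelian group on the countable set $\{[E_k]:k\in\N^+\}$, which is exactly the group of eventually-zero integer sequences $\bbZ^\infty$, under the same $\infty$-indexing convention fixed in Prop.~\ref{prop:decomposition of Flag}. I expect essentially no genuine obstacle here: the real content, namely the Krull--Schmidt property together with the classification of dimension vectors, is already supplied by Prop.~\ref{prop:decomposition of Flag}, so the residual work is only the routine verification that the indecomposables are the elementary flags and the bookkeeping of matching the free abelian group on a countable generating set with the notation $\bbZ^\infty$.
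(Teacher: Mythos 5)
Your proposal is correct and follows essentially the same route as the paper: the corollary is obtained by combining the general fact that an additive Krull--Schmidt category has Grothendieck group free abelian on its indecomposables with the classification from Prop.~\ref{prop:decomposition of Flag}, whose proof already identifies the indecomposables as exactly your elementary flags $E_k=\field_\star^k$. Your closed-form decomposition via $a_k=m_k-m_{k+1}$ is a slightly tidier packaging of the paper's inductive peeling argument, but the content is the same.
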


In Sec.~\ref{sec:mobius of rank}, we define the rank invariant of a persistent flag; see Defn.~\ref{def:rank}, and study its generalized persistent diagram (cf. \citep[Defn.~7.1]{patel2018generalized}).

\subsubsection{The category of finite-dimensional flag}\label{sec:flag}

\begin{definition}[Flag and morphism of flags] 
\label{def:flag} 
A \textbf{flag} $\Vflag $ is a non-increasing filtration of vector spaces:
$$\Vflag :=V_1 \supseteq  V_2 \supseteq  \cdots \supseteq  V_\ell\supseteq  \cdots .$$
The flag $\Vflag $ is said to have \textbf{finite depth} if there exists $n$ such that $V_n=0$ and it is said to be \textbf{finite-dimensional} if $\dimension(V_1)<\infty$ (as a consequence, $\dimension(V_n)<\infty$ for all $n$).

A morphism $f:\Vflag \to \Wflag $ of flags is a linear map 
\begin{center}
    $f:V_1\to W_1$ such that $f(V_\ell)\subseteq W_\ell$ for any $\ell$.
\end{center}
The morphism $f$ is said to be \textbf{strict} if $f(V_\ell)=f(V_1)\cap W_\ell$.
\end{definition}

The category $\Flag_{\operatorname{fin}}$ is an additive category with zero object, kenerls, cokernels, images and coimages, but it is not abelian (see Rmk.~\ref{rmk:non-abelian}). 

\begin{remark}
\label{rmk:non-abelian}
Consider two filtrations on $\field$, $\field_\star^1:=\field\supseteq  0\supseteq  \cdots$ and $\field_\star^2:=\field\supseteq  \field\supseteq  0\supseteq \cdots$. The morphism $f:\field_\star^1\to \field_\star^2$ corresponding to the identity map $\id_\field$ on $\field$ has trivial kernel and cokernel (and thus is monic and epic), but is not an isomorphism. In an abelian category, if a morphism is monic and epic, then it is an isomorphism. Therefore, $\Flag_{\operatorname{fin}}$ is not abelian.
\end{remark}

Below, we introduce a complete invariant for objects in $\Flag_{\operatorname{fin}}$. See Prop \ref{prop:dim_complete} for the proof of its completeness.

\begin{definition}[Dimension of a flag] \label{def:dim}
For any finite dimensional flag $\Vflag $, we define the \textbf{dimension of $\Vflag $}, denoted as $\fdim(\Vflag)$, to be the non-increasing sequences of integers $(m_1,m_2,\dots)\in 
\N^\infty$, where $m_\ell:=\dimension(V_\ell)$ for each $l\in\N^+.$
\end{definition}

For any $n\geq 1$, we define a finite-depth flag $\field_\star^n$ such that $\field_\ell^n$ is $\field$ for $1\leq l\leq n$ and $0$ for $l>n$. Notice that the dimension of $\field_\star^n$ is
\[(\underbrace{1,\dots,1}_{n},0,\dots)\in \mathbb{N}^\infty.\]

\begin{proof}[Proof of Prop.~\ref{prop:decomposition of Flag}]
It suffices to prove that each $\field_\star^n$ is decomposable, and every finite-depth flag decomposes into a finite direct sum of isomorphism classes of elements in $B:=\{\field_\star^n\}_{n\geq 1}.$ 

We first prove that each $\field_\star^n$ is indecomposable. Suppose $\field_\star^n=\Vflag \oplus \Wflag $. Then $V_1\oplus W_1=\field$ implies one of $V$ and $W$ must be $0$. Thus, $\field_\star^n$ cannot be decomposed into a direct sum of non-zero objects in $\Flag_{\operatorname{fin}}$. 

Let $\Vflag $ be a finite filtration such that $V_\ell\neq 0$ iff $l\leq k$ for some integer $k$. Let $m_\ell:=\operatorname{dim}(V_\ell)$ for any $\ell$. Then 
\begin{align*}
    \Vflag &\cong(\field^{m_1}, \field^{m_1}\supseteq \cdots\supseteq  \field^{m_k}\supseteq  0\supseteq \cdots)\\
    &= (\field^{m_1-1}, \field^{m_1-1}\supseteq \cdots\supseteq  \field^{m_k-1}\supseteq  0\supseteq \cdots)\oplus \field_\star^n,
\end{align*}
where $n$ is the smallest $\ell$ s.t. $m_\ell=1.$ Repeat this process for finitely many steps to decompose $\Vflag $ into a direct sum of elements in $B.$ 
\end{proof}

\subsubsection{The rank invariant of a persistent flag and its generalized persistence diagram}\label{sec:mobius of rank}

The dimension of flags induces a categorical invariant (see Defn.~\ref{def:categorical invariant}):

\begin{definition}[Rank of a flag morphism]
For any flag morphism  $f:\Vflag \to \Wflag $  of flags such that $\image(f)$ is finite dimensional, we define the \textbf{rank of $f$} as $$\rank(f):=
\fdim(\image(f)).$$
\end{definition}

We say that a persistent flag $\pflag$ is \emph{$\mathrm{q}$-tame} if for every interval $I$, $\image(\pflag(I))$ is finite dimensional. Note that if a persistent space $\Xfunc$ has $\mathrm{q}$-tame persistent (co)homology, then its persistent cup-module $\Phi(\Xfunc)$ is a $\mathrm{q}$-tame persistent flag and so is the persistent $\ell$-cup module $\Phi^\ell(\Xfunc)$ for any $\ell.$

\begin{definition} \label{def:rank}
The \textbf{rank invariant} of a $\mathrm{q}$-tame persistent flag $\pflag$ is defined as the functor $\rank(\pflag):(\Int,\subseteq)\to (\N^\infty,\leq)^{\mathrm{op}}$, given by
\[\rank(\pflag) (I) :=
\fdim(\image(\pflag(I))).
\]
\end{definition}

In other words, the rank invariant is a persistent invariant because it arises from $\dimension$ which is categorical invariant, cf.  Defn.~\ref{def:p_invariant}.

Suppose $\pflag$ is $S=\{s_1<\cdots<s_m\}$-constructible (see pg.~\pageref{para:constructibility}). We define the \textbf{persistent $\rank$-diagram} $\dgm\left(\rank\left(\pflag\right)\right):\Int\to \mathfrak{A}(\Flag_{\operatorname{fin}})\cong \bbZ^\infty$ of $\pflag$ to be the persistent $\rank$-diagram of the persistent rank invariant $\rank(\pflag)$ associated to $\pflag$ (see Defn.~\ref{def:I-dgm}). 
In other words, $\dgm\left(\rank\left(\pflag\right)\right)$ is the M\"obius inversion of $\rank(\pflag)$, where for every $s_i\leq s_j$, 
\begin{align*}
    \dgm\left(\rank\left(\pflag\right)\right)(\linterval  s_i,s_j\rinterval ) &:= \rank(\pflag) (\linterval  s_i,s_j\rinterval ) - \rank(\pflag) (\linterval  s_{i-1},s_j\rinterval ) \\
    &- \rank(\pflag) (\linterval  s_i,s_{j+1}\rinterval ) + \rank(\pflag) (\linterval  s_{i-1},s_{j+1}\rinterval ),
\end{align*}
$    \dgm\left(\rank\left(\pflag\right)\right)(\linterval  s_i,\infty\rinterval ) := \rank(\pflag) (\linterval  s_i,\infty\rinterval ) - \rank(\pflag) (\linterval  s_{i-1},\infty\rinterval )$, and $\dgm\left(\rank\left(\pflag\right)\right)(I):=0$ otherwise.

For each $l\in\N^+$, denote by $V_{\ell,\bullet}$ the persistence module such that $t\mapsto V_{\ell,t}$ the $\ell$-th vector space in the flag $V_{\star,t}$, and $(t\leq s)\mapsto (V_{\ell,t}\leftarrow V_{\ell,s}).$

\begin{proposition}[M\"obius inversion is depth-wise]
\label{prop:depth-wise}
The rank invariant of a finitely constructible persistent flag $\pflag$ and its M\"obius inversion are both depth-wise. In other words, $$\rank(\pflag)=\left(\rank(V_{1,\bullet}), \rank(V_{2,\bullet}),\dots\right) :\Int\to \left(\N^\infty,\leq\right)^{\mathrm{op}},$$
and similarly for $\dgm\left(\rank\left(\pflag\right)\right).$ Namely:
$$\dgm\left(\rank\left(\pflag\right)\right)=\left(\dgm(V_{1,\bullet}), \dgm(V_{2,\bullet}),\dots\right) :\Int\to \bbZ^\infty.$$
\end{proposition}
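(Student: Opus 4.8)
The plan is to reduce the proposition to two observations: that the rank invariant of a persistent flag is computed one depth at a time, and that the M\"obius inversion formula of Defn.~\ref{def:I-dgm} is $\bbZ$-linear, hence commutes with extracting coordinates in $\mathfrak{A}(\Flag_{\operatorname{fin}})\cong\bbZ^\infty$ (Cor.~\ref{cor:grothendieck group of flag}). First I would fix an interval $I=[a,b]$ and analyze the transition morphism $\pflag(I):V_{\star,b}\to V_{\star,a}$ of flags level by level, comparing it with the transition maps of the depth-$\ell$ persistence modules $V_{\ell,\bullet}$.

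The key step is to identify the image of a flag morphism depth-wise. For any morphism $f:\Vflag\to\Wflag$ in $\Flag_{\operatorname{fin}}$, I claim that the categorical image satisfies $\image(f)_\ell=f(V_\ell)$ for every $\ell$: the collection $(f(V_\ell))_\ell$ is a subobject of $\Wflag$ via the depth-wise inclusions $f(V_\ell)\subseteq W_\ell$ (a mono), $f$ factors through it via the depth-wise surjection $V_\ell\twoheadrightarrow f(V_\ell)$, and any subobject $U_\star\hookrightarrow \Wflag$ through which $f$ factors must satisfy $f(V_\ell)\subseteq U_\ell$ in each depth, so $(f(V_\ell))_\ell$ is the smallest such subobject. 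Care is needed here because $\Flag_{\operatorname{fin}}$ is not abelian (Rmk.~\ref{rmk:non-abelian}), so this identification cannot be quoted from abelian-category folklore and must be checked against the explicit factorization. Granting this, $\fdim(\image(f))_\ell=\dimension(f(V_\ell))$, which is exactly the rank of the restriction $f\vert_{V_\ell}:V_\ell\to W_\ell$. Since by the definition of $V_{\ell,\bullet}$ the transition map of this persistence module is precisely the depth-$\ell$ restriction of $\pflag(I)$, I obtain $\rank(\pflag)(I)_\ell=\dimension(\image(\pflag(I)\vert_{V_{\ell,b}}))=\rank(V_{\ell,\bullet})(I)$, which establishes the first asserted equality $\rank(\pflag)=(\rank(V_{1,\bullet}),\rank(V_{2,\bullet}),\dots)$.

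For the M\"obius inversion, I would apply Defn.~\ref{def:I-dgm} directly to $\rank(\pflag)$. By that definition each value $\dgm(\rank(\pflag))(I)$ is a fixed signed sum of four (resp.~two) values of $\rank(\pflag)$ at translated intervals, computed in $\mathfrak{A}(\Flag_{\operatorname{fin}})\cong\bbZ^\infty$. Because addition in $\bbZ^\infty$ is coordinate-wise and, by the previous paragraph, the $\ell$-th coordinate of $\rank(\pflag)$ is $\rank(V_{\ell,\bullet})$, the $\ell$-th coordinate of this signed sum is the same signed sum applied to $\rank(V_{\ell,\bullet})$. That signed sum is exactly the M\"obius inversion of the standard ($S$-constructible) rank invariant of $V_{\ell,\bullet}$, which by Ex.~\ref{ex:rk and dgm} equals the persistence diagram $\dgm(V_{\ell,\bullet})$. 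Hence $\dgm(\rank(\pflag))(I)_\ell=\dgm(V_{\ell,\bullet})(I)$ for all $\ell$ and $I$, giving the second asserted equality.

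The main obstacle, and the only place demanding genuine care, is the depth-wise computation of images in the non-abelian category $\Flag_{\operatorname{fin}}$; once that lemma is secured, both assertions follow immediately from $\bbZ$-linearity and the coordinate-wise group structure of $\bbZ^\infty$. I would also record that finite constructibility is used only to guarantee that the M\"obius inversions (of $\rank(\pflag)$ and of each $V_{\ell,\bullet}$) are defined, and that $\mathrm{q}$-tameness is what keeps every $\fdim$ finite so that $\rank$ takes values in $\N^\infty$.
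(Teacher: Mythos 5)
Your proposal is correct and follows essentially the same route as the paper's proof: identify the image of the transition flag morphism depth by depth, then observe that the M\"obius inversion formula of Defn.~\ref{def:I-dgm} is a coordinate-wise signed sum in $\bbZ^\infty$. The only difference is that you explicitly justify the depth-wise description of the categorical image in the non-abelian category $\Flag_{\operatorname{fin}}$ (via the smallest-subobject characterization), a step the paper's proof asserts without comment.
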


\begin{proof}  
We first show that the rank invariant $\rank(\pflag)$ is depth-wise: for any $I \in \Int$, we have
\begin{align*}
    \rank(\pflag) (I ) 
    &= \fdim\left(\image(\pflag(I ))\right)\\
    &= \big(\dimension (\image(V_{1,\bullet}(I ))),\dimension(\image(V_{2,\bullet}(I ))),\dots\big)\\
    &= \big(\rank(V_{1,\bullet})(I ),\rank(V_{2,\bullet})(I ),\dots\big).
\end{align*}

By the above and the definition of $\dgm\left(\rank\left(\pflag\right)\right)$, $\dgm\left(\rank\left(\pflag\right)\right)$ is depth-wise.
\end{proof}

Notice that the values of the rank invariant are monotonic: they are non-increasing sequences of non-negative integers. However, the M\"obius inversion does not preserve such monotonicity, see Ex.~\ref{ex:monotonicity lost} below.

\begin{example}[M\"obius inversion does not preserve the monotonicity of ranks] 
\label{ex:monotonicity lost}
As in Rmk.~\ref{rmk:non-abelian}, consider the morphism $f:\field_\star^1\to \field_\star^2$ corresponding to the identity map $\id_\field$ on $\field$. Define a persistent flag $\pflag:(\mathbb{R}_{\geq 0},\leq)\to \Flag_{\operatorname{fin}}$ such that
\[V_{\star,t}=\begin{cases}
\field_\star^1,\mbox{ for $t\in [0,1)$,}\\
\field_\star^2,\mbox{ for $t\in [1,2)$,}\\
0,\mbox{ for $t\geq 2$,}\\
\end{cases}
\text{ and }
(V_{\ell,t}\leftarrow V_{\ell,s})=\begin{cases}
\id_{\field_\star^1},\mbox{ for $[t,s]\subseteq [0,1)$,}\\
\id_{\field_\star^2},\mbox{ for $[t,s]\subseteq [1,2)$,}\\
f,\mbox{ for $t<1\leq s<2$,}\\
0,\mbox{ otherwise.}\\
\end{cases}
\] Then,  
\begin{align*}
\dgm\left(\rank\left(\pflag\right)\right)([1,1])&=\rank(\pflag)([1,1])-\rank(\pflag)([0,1])-\rank(\pflag)([1,2])+\rank(\pflag)([0,2])\\
&=(1,1,0,\ldots) - (1,0,\ldots) - (0,0,\ldots) +(0,0,\ldots) = (0,1,0,\ldots). 
\end{align*} 
This shows that the M\"obius inversion does not preserve the monotonicity property of ranks.
\end{example}

\noindent\textbf{Graded flags.} All the above results for flags can be generalized to graded flags. \label{para:graded flag}
A \textbf{graded flag} is a degree-wise non-increasing filtration of graded vector spaces:
$$\bigoplus_{p\geq 0} V^{p}_1 \supseteq  \bigoplus_{p\geq 0} V^{p}_2 \supseteq  \cdots \supseteq  \bigoplus_{p\geq 0} V^{p}_\ell\supseteq  \cdots,$$
which will be denoted by $V^{\circ}_\star$. Here $\circ$ and $\star$ represent the parameter for degree and depth (in flags), respectively.
Let $\GVect$ be the category of graded vector spaces.
As before, we can see that the category $\grFlag_{\operatorname{fin}}$ of finite-depth graded flags of finite-dimensional vector spaces is an additive category with zero object, kenerls, cokernels, images and coimages, but it is not abelian.

We define the \textbf{dimension of a graded flag $V^{\circ}_\star$}, denoted as $\fdim(V^{\circ}_\star)$, to be a matrix such that each row of it is the dimension of the flag in the corresponding degree. As before, we can check that the dimension is a complete invariant for finite-depth graded flags, and every graded flag can be uniquely decomposed into the direct sum of indecomposables whose dimension is of the following form:  
\[
\begin{pmatrix}
0 & \cdots & 0 & 0 & \cdots \\
\cdots  & \cdots  & \cdots  & \cdots & \cdots \\
0 & \cdots & 0 & 0 & \cdots \\
1 & \cdots & 1 & 0 & \cdots \\
0 & \cdots & 0 & 0 & \cdots \\
\cdots  & \cdots  & \cdots  & \cdots & \cdots \\
0 & \cdots & 0 & 0 & \cdots
\end{pmatrix}\in \mathbb{N}^{\infty,\infty},
\]
where for some $p\geq 0$ and $n_p\geq 0$, the first $n_p$ entries in the $p$-th row are $1$ and all other entries are $0$.

For a persistent graded flag $V^\circ_{\star,\bullet}$, we define its \textbf{rank invariant} of a persistent flag $\pflag$ to be the functor $\rank\left(V^\circ_{\star,\bullet}\right):(\Int,\subseteq)\to (\N^\infty,\leq)^{\mathrm{op}}$, given by 
\[\rank\left(V^\circ_{\star,\bullet}\right) (I) :=
\fdim\left(\image \left(V^\circ_{\star,\bullet}(I)\right)\right).
\]
It is straightforward to check that  the rank invariant of persistent graded flags and its M\"obius inversion are both given degree-wise, i.e. \label{para:deg-wise}
\begin{equation}\label{eq:rank of g.flag}
\rank\left(V^\circ_{\star,\bullet}\right)=
\left(\rank\left(V_{\ell,\bullet}^p\right)\right)_{p\in\N^+,\ell\in \N^+} :\Int\to \left(\N^{\infty,\infty},\leq\right)^{\mathrm{op}},
\end{equation}
and
\begin{equation}\label{eq:dgm of g.flag}
\dgm\left(\rank\left(V^\circ_{\star,\bullet}\right)\right)=\left(\dgm\left(V_{\ell,\bullet}^p\right)\right)_{p\in\N^+,\ell\in \N^+} :\Int\to \bbZ^{\infty,\infty}.
\end{equation}

Eqn.~(\ref{eq:dgm of g.flag}) suggests that the persistence diagram of a persistent graded flag $V^\circ_{\star,\bullet}$ can be obtained by stacking the standard persistence diagrams of all $V_{\ell,\bullet}^p$, for all $\ell$ and $p$; see Fig.~\ref{fig:dgm of p. cup module} for an example.

\subsubsection{Persistent cup module as a persistent graded flag and its stability}\label{sec:p.cup module}

For a topological space $ X$ and any $\ell\geq 1$, let $(\bfH^{ + }( X))^{\ell}$ be the $\ell$-fold product of the graded algebra $(\bfH^{ + }( X),+,\smile)$. Then the following non-increasing sequence of spaces forms a graded flag:  
\[\cupmodule( X):\, \bfH^{ + }( X)\supseteq  (\bfH^{ + }( X))^{ 2}\supseteq  (\bfH^{ + }( X))^{3}\supseteq \cdots.\]
Any continuous map $f: X\to  Y$ induces a map from $\bfH^+(f):\bfH^+( Y)\to \bfH^+( X)$ that preserves the cup product operation. Therefore, for any $\ell$, $\bfH^+(f)\vert_{(\bfH^{ + }( Y))^{\ell}}$ is a map from $(\bfH^{ + }( Y))^{\ell}$ to $(\bfH^{ + }( X))^{\ell}$. In addition, induces $\bfH^+(f)$ induces a (graded) flag morphism from $\cupmodule( Y)\to \cupmodule( X)$; see Defn. \ref{def:flag}.

\begin{definition}\label{def:cup module}
For a persistent space $\Xfunc$, we define the \textbf{persistent cup module} of $\Xfunc$ to be the persistent graded flag 
\begin{center}
$\cupmodule(\Xfunc):(\bbR,\leq)\to \grFlag^{\mathrm{op}}$ with $t\mapsto \cupmodule( X_t)$,
\end{center}
and for any $t\leq t'$ the map $\cupmodule(X_{t'})\to \cupmodule(X_t)$ is induced by the map $\bfH^+( X_{t'}\to  X_t).$ In particular, we have the following commutative diagram:
\[
\begin{tikzcd}[column sep=small]
\cupmodule( X_t):
&
\bfH^{ + }( X_t)
\ar[r, phantom, sloped, "\supseteq "]
& (\bfH^{ + }( X_t))^{ 2} 
\ar[r, phantom, sloped, "\supseteq "]
& (\bfH^{ + }( X_t))^{ 3}
\ar[r, phantom, sloped, "\supseteq "]
& \cdots
\\
\cupmodule( X_{t'}):
\ar[u]
&
\bfH^{ + }( X_{t'})
\ar[u]
\ar[r, phantom, sloped, "\supseteq "]
& (\bfH^{ + }( X_{t'}))^{ 2} 
\ar[u]
\ar[r, phantom, sloped, "\supseteq "]
& (\bfH^{ + }( X_{t'}))^{ 3}
\ar[u]
\ar[r, phantom, sloped, "\supseteq "]
& \cdots
\ar[u]
\end{tikzcd}
\]
\end{definition}
 
\begin{definition} \label{def:l-cup module}
For a persistent space $\Xfunc$ and any $\ell\geq 1$, we define the \textbf{persistent $\ell$-cup module} of $\Xfunc$ to be the persistence graded vector space
\[\cupmodule^\ell(\Xfunc):(\bbR,\leq) \to \GVect^{\mathrm{op}}\text{ with }t\mapsto (\bfH^{ + }\left( X_t)\right)^{\ell}\]
\end{definition}

Let 
$$\barc\!\left(\deg_p\left(\cupmodule^\ell(\Xfunc)\right)\right)$$ be the barcode of the degree-$p$ component of the persistent $\ell$-cup module, and let $$\barc\!\left(\cupmodule^\ell(\Xfunc)\right):=\bigsqcup_{p\geq 1}\barc\!\left(\deg_p\left(\cupmodule^\ell(\Xfunc)\right)\right).$$ \label{para:l_cup_dgm_barcode}
 In particular, $\barc\!\left(\cupmodule^1(\Xfunc)\right)$ is the standard barcode of $\Xfunc$ in all positive degrees. 

We now establish the stability of persistent cup modules and persistent $\ell$-cup modules i.e. Thm.~\ref{thm:stability dE_dB_dGH} and Prop.~\ref{prop:stab-l-cup module}.

\noindent\textbf{Stability of persistent ($\ell$-)cup modules} In general, for any persistent graded algebra $\Afunc$ (see pg.~\ref{para:graded algebra}), there is a persistent graded flag structure similar to the one underlying persistent cup modules: for any $t\leq t'$,

\[
\begin{tikzcd}[column sep=small]
\cupmodule( A_t):
&
 A^{ + }_t
\ar[r, phantom, sloped, "\supseteq "]
& ( A^{ + }_{t})^{ 2} 
\ar[r, phantom, sloped, "\supseteq "]
& ( A^{ + }_{t})^{ 3}
\ar[r, phantom, sloped, "\supseteq "]
& \cdots
\\
\cupmodule( A_{t'}):
&
 A^{ + }_{t'}
\ar[u]
\ar[r, phantom, sloped, "\supseteq "]
& ( A^{ + }_{t'})^{ 2} 
\ar[u]
\ar[r, phantom, sloped, "\supseteq "]
& ( A^{ + }_{t'})^{ 3}
\ar[u]
\ar[r, phantom, sloped, "\supseteq "]
& \cdots
\ar[u]
\end{tikzcd}
\]

\begin{lemma} \label{lem:de-de}
For persistent graded algebras $\Afunc$ and $\Bfunc$, 
\[\de(\len(\Afunc), \len(\Bfunc))\leq \de\left(\rank(\cupmodule(\Afunc)),\rank(\cupmodule(\Bfunc))\right).\]
\end{lemma}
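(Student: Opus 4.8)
The plan is to show that the persistent cup-length invariant $\len(\Afunc)$ factors through the rank invariant $\rank(\cupmodule(\Afunc))$ of the cup module via a fixed order-preserving map of posets, and then to invoke the general principle that the erosion distance cannot increase under post-composition with such a map. Throughout I use that for $t\leq s$ the persistent graded algebra $\Afunc$ gives a graded algebra morphism $\phi\colon A_s\to A_t$, that $\len(\Afunc)([t,s])=\len(\image(\phi))$ by (the graded-algebra analogue of) Defn.~\ref{def:cup-length}, and that $\rank(\cupmodule(\Afunc))([t,s])=\fdim\!\left(\image(\cupmodule(A_s)\to\cupmodule(A_t))\right)$ by Defn.~\ref{def:rank} and Defn.~\ref{def:cup module}.

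First I would make precise that the cup-length of $\image(\phi)$ can be read off term-by-term from the image flag of the cup module. Since $\phi$ is a graded algebra homomorphism we have $\image(\phi)^{+}=\phi(A_s^{+})$, and because $\phi(a_1\bullet\cdots\bullet a_\ell)=\phi(a_1)\bullet\cdots\bullet\phi(a_\ell)$ the image commutes with the $\ell$-fold product functor:
\[
\left(\image(\phi)^{+}\right)^{\ell}=\left(\phi(A_s^{+})\right)^{\ell}=\phi\!\left((A_s^{+})^{\ell}\right)=\image\!\left(\phi^{\ell}\colon (A_s^{+})^{\ell}\to (A_t^{+})^{\ell}\right).
\]
Thus the depth-$\ell$ term of the flag morphism $\cupmodule(A_s)\to\cupmodule(A_t)$ has image exactly $\left(\image(\phi)^{+}\right)^{\ell}$. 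Since $\len(\image(\phi))$ is the largest $\ell$ with $\left(\image(\phi)^{+}\right)^{\ell}\neq 0$, it equals the largest depth $\ell$ at which the image flag is non-zero, i.e.\ the largest $\ell$ such that the $\ell$-th column of the matrix $\fdim\!\left(\image(\cupmodule(A_s)\to\cupmodule(A_t))\right)$ has a non-zero entry. Finiteness of this number follows from finite depth of the flags.

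This motivates defining the map $L\colon(\N^{\infty,\infty},\leq)\to(\N,\leq)$ by $L(M):=\max\{\ell\geq 1 : M_{p,\ell}\neq 0 \text{ for some }p\}$ (with $L(0)=0$), where $M_{p,\ell}$ is the degree-$p$, depth-$\ell$ entry. The map $L$ is \emph{monotone}: if $M\leq M'$ componentwise then every non-zero column of $M$ is a non-zero column of $M'$, so $L(M)\leq L(M')$. By the previous paragraph, $L$ realizes the factorizations $\len(\Afunc)=L\circ\rank(\cupmodule(\Afunc))$ and $\len(\Bfunc)=L\circ\rank(\cupmodule(\Bfunc))$ as functions on $\Int$. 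I would then verify the general observation that monotone post-composition does not increase erosion distance: if $\rank(\cupmodule(\Afunc))$ and $\rank(\cupmodule(\Bfunc))$ are $\epsilon$-eroded, then for every $[a,b]\in\Int$ we have the componentwise inequality $\rank(\cupmodule(\Afunc))([a,b])\geq\rank(\cupmodule(\Bfunc))([a-\epsilon,b+\epsilon])$ together with its symmetric counterpart (Defn.~\ref{def:de}), and applying the monotone map $L$ to both sides yields $\len(\Afunc)([a,b])\geq\len(\Bfunc)([a-\epsilon,b+\epsilon])$ and symmetrically. Hence $\len(\Afunc)$ and $\len(\Bfunc)$ are also $\epsilon$-eroded, so every admissible $\epsilon$ for the rank invariants is admissible for the length invariants, and taking infima gives the claimed inequality.

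The main obstacle I anticipate is making the first step fully rigorous, namely confirming that the image computed inside $\grFlag_{\operatorname{fin}}$ agrees, in each depth $\ell$, with the positive-degree $\ell$-fold power of the image subalgebra; this rests on the compatibility of taking images with the $\ell$-fold product functor for graded algebra morphisms, which is exactly the displayed identity above and is the only genuinely algebraic input. Once this identification is secured, the monotonicity of $L$ and the behaviour of the erosion distance under monotone post-composition are routine.
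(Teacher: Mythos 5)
Your proof is correct and follows essentially the same route as the paper's: both rest on the identity $\bigl(\image(\phi)^{+}\bigr)^{\ell}=\image(\phi^{\ell})$, identify $\len(\image(\phi))$ with the largest depth at which the image flag is non-zero, and then transfer the componentwise erosion inequality for $\rank(\cupmodule(\cdot))$ to $\len(\cdot)$. The only difference is presentational: you package the last step as post-composition with an explicit monotone map $L$, whereas the paper carries out the same comparison of suprema inline.
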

\begin{proof}
First notice that for any $t\leq s$, $\left(\image( A^{ + }_{s}\to  A^{ + }_{t})\right)^{\ell}\neq 0$ if and only if $\rank\left(\left( A^{ + }_{s}\right)^{\ell}\to \left( A^{ + }_{t}\right)^{\ell}\right)> 0$. 
Let $\epsilon>\de(\rank\left(\cupmodule(\Afunc)),\rank(\cupmodule(\Bfunc))\right)$. For any $[a,b]$, we have 
\[\rank(\cupmodule( A_b)\to \cupmodule( A_a))\geq \rank(\cupmodule( B_{b+\epsilon})\to  \cupmodule( B_{a-\epsilon})).\]
Thus, 
\begin{align*}
    \len\left( A^{ + }_b\to  A^{ + }_a\right)& = \sup \left\{\ell\mid \left(\image\left( A^{ + }_{b}\to  A^{ + }_{b}\right)\right)^{\ell}\neq 0\right\}\\
    & = \sup \left\{\ell\mid \rank\left(\left( A^{ + }_{b}\right)^{\ell}\to \left( A^{ + }_{a}\right)^{\ell}\right)> 0\right\}\\
    & \geq  \sup \left\{\ell\mid \rank\left(\left( B^{ + }_{b+\epsilon}\right)^{\ell}\to \left( B^{ + }_{a-\epsilon}\right)^{\ell}\right)> 0\right\}\\
    & = \len\left( B^{ + }_{b+\epsilon}\to  B^{ + }_{a-\epsilon}\right).
\end{align*}
By similar discussion, we have $    \len\left( B^{ + }_b\to  B^{ + }_a\right)\geq  \len\left( A^{ + }_{b+\epsilon}\to  A^{ + }_{a-\epsilon}\right).$ 
\end{proof}

\begin{proof}[Proof of Thm.~\ref{thm:stability dE_dB_dGH}] 
By Lem. \ref{lem:de-de}, we have the first inequality below:
\[d_{\mathrm{E}}(\cupprod(\Xfunc),\cupprod(\Yfunc))\leq \de\left( \rank( \cupmodule(\Xfunc)), \rank( \cupmodule(\Yfunc))\right)\leq \dhi(\Xfunc,\Yfunc).\]
To prove the second inequality, we apply Thm.~\ref{thm:stab-per-inv}. 
Recall from the proof of Cor. \ref{cor:stab-cup} that weak homotopy equivalence preserves cohomology algebras. For any maps $ X\xrightarrow{f} Y\xrightarrow{g}( Z)\xrightarrow{h}W$ where $g$ is a weak homotopy equivalence, we have a graded algebra isomorphism $\bfH^*(g):\bfH^*(Z) \to\bfH^*( Y) $. Thus, $\bfH^*(g)$ induces an graded flag isomorphism $ \cupmodule(g): \cupmodule( Z)\to \cupmodule(Y)$, implying that 
\[\rank( \cupmodule(g\circ f) )= \rank( \cupmodule( f)) \text{  and  }\rank( \cupmodule( h\circ g)) = \rank( \cupmodule( h)).\]
Therefore, the second inequality follows immediately from Thm.~\ref{thm:stab-per-inv}.

For the case of Vietoris-Rips filtrations of metric spaces, the statement follows from Prop.~\ref{prop:dh-dgh}.
\end{proof}

\begin{proof} [Proof of Prop.~\ref{prop:stab-l-cup module}]
By considering the following functors for any $p,\ell\geq 1$,
\begin{center}
    \begin{tikzcd} [column sep = small, row sep = small]
    \Alg \ar[r]
    &
    \grFlag\ar[r]
    &
    \GVect\ar[r]
    &
    \Vect
    \\
    A \ar[r, mapsto]
    &
    \cupmodule(\Afunc) \ar[r, mapsto]
    &
    (A^{ + })^{\ell} \ar[r, mapsto]
    &
    \deg_p\left((A^{ + })^{\ell}\right),
    \end{tikzcd}
\end{center}
we obtain that 
\begin{align}
&\max_{p,\ell\geq 1}\,
\db\left( \barc\!\left(\deg_p\left(\cupmodule^\ell(\Xfunc)\right)\right),\barc\!\left(\deg_p(\cupmodule^\ell(\Yfunc))\right)\right) \nonumber\\
\leq &
\max_{p,\ell\geq 1}\,
\di^{\Vect } \left(\deg_p\big((\bfH^{ + }(\Xfunc))^{\ell}\big),\deg_p\big((\bfH^{ + }(\Yfunc))^{\ell}\big) \right) \label{line:alg stab} \\
= & 
\max_{\ell\geq 1}\,
\di^{\GVect} \left( (\bfH^{ + }(\Xfunc))^{\ell},(\bfH^{ + }(\Yfunc))^{\ell}\right) \label{line:functor1}\\
\leq &
\di^{\grFlag} \left(  \cupmodule(\Xfunc), \cupmodule(\Yfunc)\right)\\
\leq &
\di^{\Alg } \left(\bfH^*(\Xfunc) ,\bfH^*(\Yfunc)  \right) \label{line:functor2}\\
\leq &
\dhi \left(\Xfunc,\Yfunc \right), \label{line:dhi}
\end{align}
where Eqn.~(\ref{line:alg stab}) follows from the stability of barcodes \citep[Thm.~4.4]{chazal2009proximity},
Eqn.~(\ref{line:functor1}) - (\ref{line:functor2}) follows from \citep[Prop.~3.6]{bubenik2014categorification}, and Eqn.~(\ref{line:dhi}) follows from \citep[Rmk.~105]{ginot2019multiplicative}.
\end{proof}

\begin{remark} 
\label{rmk:Phi-stability}
One can also apply \citep[Thm. 3.11]{puuska2017erosion} (for the first inequality below) to see that 
\[ 
\de\left( \rank( \cupmodule(\Xfunc)), \rank( \cupmodule(\Yfunc))\right)\leq \di^{\grFlag} \left(  \cupmodule(\Xfunc), \cupmodule(\Yfunc)\right)\leq \dhi(\Xfunc,\Yfunc).\]
\end{remark}

\subsection{Comparative performance of the different invariants}\label{sec:flag>cup}

In this subsection, we use the spaces $\bbT^2\vee\bbS^3$ v.s. $(\bbS^1\times\bbS^2)\vee\bbS^1$ to compare the distinguishing powers of persistent homology, persistent cup-length invariant, persistent LS-category, persistent cup modules and persistent $\ell$-cup modules. In particular, we see that in this example the latter two invariants provide better approximation of the Gromov-Hausdorff distance than other invariants.  

As before, let $\bbT^2=\bbS^1\times \bbS^1$ be the $\ell_\infty$-product of two unit geodesic circles, and similarly let $\bbS^1\times \bbS^2$ be equipped the $\ell_\infty$-product metric. Let $\bbT^2\vee\bbS^3$ and $(\bbS^1\times\bbS^2)\vee\bbS^1$ both be equipped with the gluing metric (see page \pageref{para:gluing}).

By \citep[Thm.~10]{lim2020vietoris}, for any $n>0$, $\VR_r(\bbS^n)$ is homotopy equivalent to $\bbS^n$ for all $r\in \left(0,\zeta_n\right)$ where $\zeta_n:=\arccos(\tfrac{-1}{n+1})$. Note that $\zeta_2=\arccos(-\tfrac{1}3)\approx 0.61\pi$ and $\zeta_3=\arccos(-\tfrac{1}{4})\approx 0.58\pi.$

\medskip\noindent\textbf{Persistent cup-length invariants and persistent LS-category invariants.}
Let $\invariant=\cupprod$ or $\cat$.
For any interval $[a,b]\subset (0,\zeta_3)$, we have
\[\invariant\left(\VR\left(\bbT^2\vee\bbS^3\right)\right)([a,b]) = \invariant\left(\bbT^2\vee\bbS^3\right)=2.\]
For any interval $[a,b]\subset (0,\zeta_2)$, we have
\[\invariant\left(\VR\left((\bbS^1\times\bbS^2)\vee\bbS^1\right)\right)([a,b]) = \invariant\left((\bbS^1\times\bbS^2)\vee\bbS^1\right)=2.\]
Therefore, we have the persistent cup-length (or LS-category) invariants of $\bbT^1\vee\bbS^2$ and $(\bbS^1\times\bbS^2)\vee\bbS^1$ as in Fig.~\ref{fig:torus_sphere_VR_flag}. And notice that 
\begin{equation} \label{eq:dE-cup-cat}
    d_{\mathrm{E}}\left(\invariant(\VR\left(\bbT^2\vee\bbS^3\right)),\invariant(\VR\left((\bbS^1\times\bbS^2)\vee\bbS^1\right))\right)\leq 
    \tfrac{\pi-\zeta_3}{2}\approx 0.21\pi
\end{equation}

\noindent\textbf{Persistence diagrams and barcodes of persistent $\ell$-cup modules.}
Recall that $\Phi^1(X) = \bfH^+(\VR(X))$ for a metric space $X$. 
Applying \citep[Cor.~9.2, Prop.~9.4, Rmk.~9.14 \& Rmk.~9.17]{lim2020vietoris}, we obtain:
\begin{itemize}
    \item $(0,\frac{2\pi}{3}]\in \barc\left(\deg_1(\Phi^1(\bbT^2))\right)$ and there are two copies of it in the barcode;
    \item $(0,\frac{2\pi}{3}]\in \barc\left(\deg_2(\Phi^1(\bbT^2))\right)$;
    \item $(0,\frac{2\pi}{3}]\in \barc\left(\deg_1(\Phi^1((\bbS^1\times\bbS^2))\right)$;
    \item $(0,\zeta_2]\in \barc\left(\deg_2(\Phi^1((\bbS^1\times\bbS^2))\right)$.
\end{itemize}
And the above bars are the only ones that start at $0$ in the corresponding persistence modules.

Applying \citep[Rmk.~6.2]{lim2020vietoris} and \citep[Prop.~3.7]{adamaszek2020homotopy}, we obtain the barcodes of the $\bbT^2\vee\bbS^3$ and $\VR\left((\bbS^1\times\bbS^2)\vee\bbS^1\right)$ in degree $1,2$ and $3$; see Fig.~\ref{fig:ell barcodes}.
   
\begin{figure}
    \centering
\begin{tabular}{ c c c  }
\begin{tikzpicture}[scale=0.45]
    \begin{axis} [ 
    ticklabel style = {font=\Large},
    axis y line=middle, 
    axis x line=middle,
    ytick={0.5,0.67,0.95},
    yticklabels={$\tfrac{\pi}{2}$,$\tfrac{2\pi}{3}$,$\pi$},
    xtick={0.5,0.58,0.95},
    xticklabels={$\tfrac{\pi}{2}$,$\zeta_3$, $\pi$},
    xmin=-0.015, xmax=1.1,
    ymin=0, ymax=1.1,]
    \addplot [mark=none] coordinates {(0,0) (1,1)};
    \addplot [thick,color=black!10!white,fill=black!10!white, 
                    fill opacity=0.4]coordinates {
            (0.67,0.95)
            (0.67,0.67)
            (0.95,0.95)
            (0.67,0.95)};
    \addplot [black!40!white,mark=none,dashed, thin] coordinates {(0,0.67) (0.67,0.67)};
    \addplot [black!40!white,mark=none,dashed, thin] coordinates {(0.67,0) (0.67,0.67)};
    \addplot[barccolor,mark=*] (0, 0.67) circle (2pt) node[above right,barccolor]{\Large\textsf{2}};
    \node[mark=none] at (axis cs:0.68,0.21){$\deg_1(\Phi^1(\bbT^2\vee\bbS^3))$};
       \end{axis}
    \end{tikzpicture}
 
&
    \begin{tikzpicture}[scale=0.45]
    \begin{axis} [ 
    ticklabel style = {font=\Large},
    axis y line=middle, 
    axis x line=middle,
    ytick={0.5,0.67,0.95},
    yticklabels={$\tfrac{\pi}{2}$,$\tfrac{2\pi}{3}$,$\pi$},
    xtick={0.5,0.58,0.95},
    xticklabels={$\tfrac{\pi}{2}$,$\zeta_3$, $\pi$},
    xmin=-0.015, xmax=1.1,
    ymin=0, ymax=1.1,]
    \addplot [mark=none] coordinates {(0,0) (1,1)};
    \addplot [thick,color=black!10!white,fill=black!10!white, 
                    fill opacity=0.4]coordinates {
            (0.67,0.95)
            (0.67,0.67)
            (0.95,0.95)
            (0.67,0.95)};
    \addplot [black!40!white,mark=none,dashed, thin] coordinates {(0,0.67) (0.67,0.67)};
    \addplot [black!40!white,mark=none,dashed, thin] coordinates {(0.67,0) (0.67,0.67)};
    \addplot[barccolor,mark=*] (0, 0.67) circle (2pt) node[above right,barccolor]{\Large\textsf{1}};
   \node[mark=none] at (axis cs:0.68,0.21){$\deg_2(\Phi^1(\bbT^2\vee\bbS^3))$};
       \end{axis}
    \end{tikzpicture}
&
    \begin{tikzpicture}[scale=0.45]
    \begin{axis} [ 
    ticklabel style = {font=\Large},
    axis y line=middle, 
    axis x line=middle,
    xtick={0.5,0.58,0.95},
    xticklabels={$\tfrac{\pi}{2}$,$\zeta_3$,$\pi$},
    ytick={0.5,0.58,.67,0.95},
    yticklabels={,$\zeta_3$, , $\pi$},
    xmin=-0.015, xmax=1.1,
    ymin=0, ymax=1.1,]
    \addplot [mark=none] coordinates {(0,0) (1,1)};
    \addplot [thick,color=black!10!white,fill=black!10!white, 
                    fill opacity=0.4]coordinates {
            (0.58,0.95)
            (0.58,0.58)
            (0.95,0.95)
            (0.58,0.95)};
    \addplot [black!40!white,mark=none,dashed, thin] coordinates {(0,0.58) (0.58,0.58)};
    \addplot [black!40!white,mark=none,dashed, thin] coordinates {(0.58,0) (0.58,0.58)};
    \addplot[barccolor,mark=*] (0, 0.58) circle (2pt) node[above right,barccolor]{\Large\textsf{1}};
    \node[mark=none] at (axis cs:0.68,0.21){$\deg_3(\Phi^1(\bbT^2\vee\bbS^3))$};
       \end{axis}
    \end{tikzpicture}
\\
\begin{tikzpicture}[scale=0.45]
    \begin{axis} [ 
    ticklabel style = {font=\Large},
    axis y line=middle, 
    axis x line=middle,
    ytick={0.5,0.67,0.95},
    yticklabels={$\tfrac{\pi}{2}$,$\tfrac{2\pi}{3}$,$\pi$},
    xtick={0.5,0.6,0.95},
    xticklabels={$\tfrac{\pi}{2}$,$\zeta_2$, $\pi$},
    xmin=-0.015, xmax=1.1,
    ymin=0, ymax=1.1,]
    \addplot [mark=none] coordinates {(0,0) (1,1)};
    \addplot [thick,color=black!10!white,fill=black!10!white, 
                    fill opacity=0.4]coordinates {
            (0.67,0.95)
            (0.67,0.67)
            (0.95,0.95)
            (0.67,0.95)};
    \addplot [black!40!white,mark=none,dashed, thin] coordinates {(0,0.67) (0.67,0.67)};
    \addplot [black!40!white,mark=none,dashed, thin] coordinates {(0.67,0) (0.67,0.67)};
    \addplot[barccolor,mark=*] (0, 0.67) circle (2pt) node[above right,barccolor]{\Large\textsf{2}};
    \node[mark=none] at (axis cs:0.68,0.21){$\deg_1(\Phi^1((\bbS^1\times\bbS^2)\vee\bbS^1))$};
       \end{axis}
    \end{tikzpicture}
&
    \begin{tikzpicture}[scale=0.45]
    \begin{axis} [ 
    ticklabel style = {font=\Large},
    axis y line=middle, 
    axis x line=middle,
    ytick={0.5,0.6,0.67,0.95},
    yticklabels={,$\zeta_2$,,$\pi$},
    xtick={0.5,0.6,0.95},
    xticklabels={$\tfrac{\pi}{2}$,$\zeta_2$, $\pi$},
    xmin=-0.015, xmax=1.1,
    ymin=0, ymax=1.1,]
    \addplot [mark=none] coordinates {(0,0) (1,1)};
    \addplot [thick,color=black!10!white,fill=black!10!white, 
                    fill opacity=0.4]coordinates {
            (0.6,0.95)
            (0.6,0.6)
            (0.95,0.95)
            (0.6,0.95)};
    \addplot [black!40!white,mark=none,dashed, thin] coordinates {(0,0.6) (0.6,0.6)};
    \addplot [black!40!white,mark=none,dashed, thin] coordinates {(0.6,0) (0.6,0.6)};
    \addplot[barccolor,mark=*] (0, 0.6) circle (2pt) node[above right,barccolor]{\Large\textsf{1}};
    \node[mark=none] at (axis cs:0.68,0.21){$\deg_2(\Phi^1((\bbS^1\times\bbS^2)\vee\bbS^1))$};
    \end{axis}
    \end{tikzpicture}
&
    \begin{tikzpicture}[scale=0.45]
    \begin{axis} [ 
    ticklabel style = {font=\Large},
    axis y line=middle, 
    axis x line=middle,
    ytick={0.5,0.6,0.67,0.95},
    yticklabels={,$\zeta_2$,,$\pi$},
    xtick={0.5,0.6,0.95},
    xticklabels={$\tfrac{\pi}{2}$,$\zeta_2$, $\pi$},
    xmin=-0.015, xmax=1.1,
    ymin=0, ymax=1.1,]
    \addplot [mark=none] coordinates {(0,0) (1,1)};
    \addplot [thick,color=black!10!white,fill=black!10!white, 
                    fill opacity=0.4]coordinates {
            (0.6,0.95)
            (0.6,0.6)
            (0.95,0.95)
            (0.6,0.95)};
    \addplot [black!40!white,mark=none,dashed, thin] coordinates {(0,0.6) (0.6,0.6)};
    \addplot [black!40!white,mark=none,dashed, thin] coordinates {(0.6,0) (0.6,0.6)};
    \addplot[barccolor,mark=*] (0, 0.6) circle (2pt) node[above right,barccolor]{\Large\textsf{1}};
    \node[mark=none] at (axis cs:0.68,0.21){$\deg_3(\Phi^1((\bbS^1\times\bbS^2)\vee\bbS^1))$};
    \end{axis}
    \end{tikzpicture}
\end{tabular}
    \caption{\emph{Top row:} persistence diagram of persistent cohomology of $\bbT^2\vee\bbS^3$ in degree $1,2$ and $3$. \emph{Bottom row:} persistence diagram of persistent cohomology of $(\bbS^1\times\bbS^2)\vee\bbS^1$ in degree $1,2$ and $3$. In each figure, the gray part is the only region (beside the blue dots) that could attain non-zero value in the persistence diagram. }
    \label{fig:ell barcodes}
\end{figure} 

Each of the persistence diagram in Fig.~\ref{fig:ell barcodes} contains an undetermined region, so we cannot always get the precise value of the bottleneck distance between the barcodes associated to each vertical two diagrams. Instead, we can easily estimate them:
\begin{itemize}
    \item $\db\left( \barc\!\left(\deg_1\left(\cupmodule^1(\bbT^2\vee\bbS^3)\right)\right),\barc\!\left(\deg_1\left(\cupmodule^1((\bbS^1\times\bbS^2)\vee\bbS^1)\right)\right)\right)\leq \frac{1}{2}\left(\pi-\frac{2\pi}{3}\right)=
    \frac{\pi}{6}$.
    \item $\db\left( \barc\!\left(\deg_2\left(\cupmodule^1(\bbT^2\vee\bbS^3)\right)\right),\barc\!\left(\deg_2\left(\cupmodule^1((\bbS^1\times\bbS^2)\vee\bbS^1)\right)\right)\right)\leq \frac{1}{2}\left(\pi-\zeta_2\right)\approx 0.2\pi$.
    \item $\db\left( \barc\!\left(\deg_3\left(\cupmodule^1(\bbT^2\vee\bbS^3)\right)\right),\barc\!\left(\deg_3\left(\cupmodule^1((\bbS^1\times\bbS^2)\vee\bbS^1)\right)\right)\right)\leq \frac{1}{2}\left(\pi-\zeta_2\right)\approx 0.21\pi$.
\end{itemize}

\begin{figure}[H]
    \centering
\begin{tabular}{ c c c  }
\begin{tikzpicture}[scale=0.45]
    \begin{axis} [ 
    ticklabel style = {font=\Large},
    axis y line=middle, 
    axis x line=middle,
    ytick={0.5,0.67,0.95},
    yticklabels={$\tfrac{\pi}{2}$,$\tfrac{2\pi}{3}$,$\pi$},
    xtick={0.5,0.58,0.95},
    xticklabels={$\tfrac{\pi}{2}$,$\zeta_3$, $\pi$},
    xmin=-0.015, xmax=1.1,
    ymin=0, ymax=1.1,]
    \addplot [mark=none] coordinates {(0,0) (1,1)};
    \addplot [thick,color=black!10!white,fill=black!10!white, 
                    fill opacity=0.4]coordinates {
            (0.67,0.95)
            (0.67,0.67)
            (0.95,0.95)
            (0.67,0.95)};
    \addplot [black!40!white,mark=none,dashed, thin] coordinates {(0,0.67) (0.67,0.67)};
    \addplot [black!40!white,mark=none,dashed, thin] coordinates {(0.67,0) (0.67,0.67)};
    \node[mark=none] at (axis cs:0.68,0.21){$\deg_1(\Phi^2(\bbT^2\vee\bbS^3))$};
    \end{axis}
    \end{tikzpicture}
&
    \begin{tikzpicture}[scale=0.45]
    \begin{axis} [ 
    ticklabel style = {font=\Large},
    axis y line=middle, 
    axis x line=middle,
    ytick={0.5,0.67,0.95},
    yticklabels={$\tfrac{\pi}{2}$,$\tfrac{2\pi}{3}$,$\pi$},
    xtick={0.5,0.58,0.95},
    xticklabels={$\tfrac{\pi}{2}$,$\zeta_3$, $\pi$},
    xmin=-0.015, xmax=1.1,
    ymin=0, ymax=1.1,]
    \addplot [mark=none] coordinates {(0,0) (1,1)};
    \addplot [thick,color=black!10!white,fill=black!10!white, 
                    fill opacity=0.4]coordinates {
            (0.67,0.95)
            (0.67,0.67)
            (0.95,0.95)
            (0.67,0.95)};
    \addplot [black!40!white,mark=none,dashed, thin] coordinates {(0,0.67) (0.67,0.67)};
    \addplot [black!40!white,mark=none,dashed, thin] coordinates {(0.67,0) (0.67,0.67)};
    \addplot[barccolor,mark=*] (0, 0.67) circle (2pt) node[above right,barccolor]{\Large\textsf{1}};
   \node[mark=none] at (axis cs:0.68,0.21){$\deg_2(\Phi^2(\bbT^2\vee\bbS^3))$};
    \end{axis}
    \end{tikzpicture}
&
    \begin{tikzpicture}[scale=0.45]
    \begin{axis} [ 
    ticklabel style = {font=\Large},
    axis y line=middle, 
    axis x line=middle,
    xtick={0.5,0.58,0.95},
    xticklabels={$\tfrac{\pi}{2}$,$\zeta_3$,$\pi$},
    ytick={0.5,0.58,.67,0.95},
    yticklabels={,$\zeta_3$, , $\pi$},
    xmin=-0.015, xmax=1.1,
    ymin=0, ymax=1.1,]
    \addplot [mark=none] coordinates {(0,0) (1,1)};
    \addplot [thick,color=black!10!white,fill=black!10!white, 
                    fill opacity=0.4]coordinates {
            (0.58,0.95)
            (0.58,0.58)
            (0.95,0.95)
            (0.58,0.95)};
    \addplot [black!40!white,mark=none,dashed, thin] coordinates {(0,0.58) (0.58,0.58)};
    \addplot [black!40!white,mark=none,dashed, thin] coordinates {(0.58,0) (0.58,0.58)};
    \node[mark=none] at (axis cs:0.68,0.21){$\deg_3(\Phi^2(\bbT^2\vee\bbS^3))$};
    \end{axis}
    \end{tikzpicture}
\\
\begin{tikzpicture}[scale=0.45]
    \begin{axis} [ 
    ticklabel style = {font=\Large},
    axis y line=middle, 
    axis x line=middle,
    ytick={0.5,0.67,0.95},
    yticklabels={$\tfrac{\pi}{2}$,$\tfrac{2\pi}{3}$,$\pi$},
    xtick={0.5,0.6,0.95},
    xticklabels={$\tfrac{\pi}{2}$,$\zeta_2$, $\pi$},
    xmin=-0.015, xmax=1.1,
    ymin=0, ymax=1.1,]
    \addplot [mark=none] coordinates {(0,0) (1,1)};
    \addplot [thick,color=black!10!white,fill=black!10!white, 
                    fill opacity=0.4]coordinates {
            (0.67,0.95)
            (0.67,0.67)
            (0.95,0.95)
            (0.67,0.95)};
    \addplot [black!40!white,mark=none,dashed, thin] coordinates {(0,0.67) (0.67,0.67)};
    \addplot [black!40!white,mark=none,dashed, thin] coordinates {(0.67,0) (0.67,0.67)};
    \node[mark=none] at (axis cs:0.68,0.21){$\deg_1(\Phi^2((\bbS^1\times\bbS^2)\vee\bbS^1))$};
    \end{axis}
    \end{tikzpicture}
&
    \begin{tikzpicture}[scale=0.45]
    \begin{axis} [ 
    ticklabel style = {font=\Large},
    axis y line=middle, 
    axis x line=middle,
    ytick={0.5,0.6,0.67,0.95},
    yticklabels={,$\zeta_2$,,$\pi$},
    xtick={0.5,0.6,0.95},
    xticklabels={$\tfrac{\pi}{2}$,$\zeta_2$, $\pi$},
    xmin=-0.015, xmax=1.1,
    ymin=0, ymax=1.1,]
    \addplot [mark=none] coordinates {(0,0) (1,1)};
    \addplot [thick,color=black!10!white,fill=black!10!white, 
                    fill opacity=0.4]coordinates {
            (0.6,0.95)
            (0.6,0.6)
            (0.95,0.95)
            (0.6,0.95)};
    \addplot [black!40!white,mark=none,dashed, thin] coordinates {(0,0.6) (0.6,0.6)};
    \addplot [black!40!white,mark=none,dashed, thin] coordinates {(0.6,0) (0.6,0.6)};
    \node[mark=none] at (axis cs:0.68,0.21){$\deg_2(\Phi^2((\bbS^1\times\bbS^2)\vee\bbS^1))$};
    \end{axis}
    \end{tikzpicture}
&
    \begin{tikzpicture}[scale=0.45]
    \begin{axis} [ 
    ticklabel style = {font=\Large},
    axis y line=middle, 
    axis x line=middle,
    ytick={0.5,0.6,0.67,0.95},
    yticklabels={,$\zeta_2$,,$\pi$},
    xtick={0.5,0.6,0.95},
    xticklabels={$\tfrac{\pi}{2}$,$\zeta_2$, $\pi$},
    xmin=-0.015, xmax=1.1,
    ymin=0, ymax=1.1,]
    \addplot [mark=none] coordinates {(0,0) (1,1)};
    \addplot [thick,color=black!10!white,fill=black!10!white, 
                    fill opacity=0.4]coordinates {
            (0.6,0.95)
            (0.6,0.6)
            (0.95,0.95)
            (0.6,0.95)};
    \addplot [black!40!white,mark=none,dashed, thin] coordinates {(0,0.6) (0.6,0.6)};
    \addplot [black!40!white,mark=none,dashed, thin] coordinates {(0.6,0) (0.6,0.6)};
    \addplot[barccolor,mark=*] (0, 0.6) circle (2pt) node[above right,barccolor]{\Large\textsf{1}};
    \node[mark=none] at (axis cs:0.68,0.21){$\deg_3(\Phi^2((\bbS^1\times\bbS^2)\vee\bbS^1))$};
    \end{axis}
    \end{tikzpicture}
\end{tabular}
    \caption{\emph{Top row:} persistence diagram of the persistent $2$-cup module of $\bbT^2\vee\bbS^3$ in degree $1,2$ and $3$. \emph{Bottom row:} persistence diagram of the persistent $2$-cup module of $(\bbS^1\times\bbS^2)\vee\bbS^1$ in degree $1,2$ and $3$.}
    \label{fig:ell barcodes 2}
\end{figure} 

For the persistence diagram of persistent $2$-cup modules, see Fig.~\ref{fig:ell barcodes 2} which we use to estimate the bottleneck distance between the barcodes associated to each vertical two diagrams. Fortunately, for the second and third item below, we are able to get the precise value for $\db$. This is because in both cases, matching all points to the diagonal is an optimal matching.
\begin{itemize}
    \item $\db\left( \barc\!\left(\deg_1\left(\cupmodule^2(\bbT^2\vee\bbS^3)\right)\right),\barc\!\left(\deg_1\left(\cupmodule^2((\bbS^1\times\bbS^2)\vee\bbS^1)\right)\right)\right)\leq \frac{1}{2}\left(\pi-\frac{2\pi}{3}\right)=\frac{\pi}{6}$.
    \item $\db\left( \barc\!\left(\deg_2\left(\cupmodule^2(\bbT^2\vee\bbS^3)\right)\right),\barc\!\left(\deg_2\left(\cupmodule^2((\bbS^1\times\bbS^2)\vee\bbS^1)\right)\right)\right)=\frac{1}{2}\cdot\frac{2\pi}{3}=\frac{\pi}{3}$.
    \item $\db\left( \barc\!\left(\deg_3\left(\cupmodule^2(\bbT^2\vee\bbS^3)\right)\right),\barc\!\left(\deg_3\left(\cupmodule^2((\bbS^1\times\bbS^2)\vee\bbS^1)\right)\right)\right)=\frac{\zeta_2}{2}$.
\end{itemize}

Now consider the case $p>3$. Recall that $\VR_r(\bbS^n)$ is homotopy equivalent to $\bbS^n$ for all $r\in \left(0,\zeta_n\right)$. Thus, $\VR_r\left(\bbT^2\vee\bbS^3\right)$ is homotopy equivalent to $\bbT^2\vee\bbS^3$ for all $r\in (0,\zeta_3)$. As a consequence, there are no bars in the degree-$p$ barcode of $\bbT^2\vee\bbS^3$ whose length is larger than $\pi-\zeta_3$.
Via a similar discussion, there are no bars in the degree-$p$ barcode of $(\bbS^1\times\bbS^2)\vee\bbS^1$ whose length is larger than $\pi-\zeta_2$. 
Therefore, for all $\ell$, we have
\[\db\left( \barc\!\left(\deg_p\left(\cupmodule^\ell(\bbT^2\vee\bbS^3)\right)\right),\barc\!\left(\deg_p\left(\cupmodule^\ell((\bbS^1\times\bbS^2)\vee\bbS^1)\right)\right)\right)\leq \frac{\pi-\zeta_3}{2},\]
which holds because the right-hand side is an upper bound for the cost of matching all bars to the diagonal.

Combining the above discussion, we obtain
\begin{equation}\label{eq:ell-p-barcode}
   \max_{\ell,p}\db\left( \barc\!\left(\deg_p\left(\cupmodule^\ell(\bbT^2\vee\bbS^3)\right)\right),\barc\!\left(\deg_p\left(\cupmodule^\ell((\bbS^1\times\bbS^2)\vee\bbS^1)\right)\right)\right)=\frac{\pi}{3}. 
\end{equation}

\noindent\textbf{Rank invariants of persistent cup modules.}
We use Fig.~\ref{fig:ell barcodes}, Fig.~\ref{fig:ell barcodes 2} and Eqn.~(\ref{eq:dgm of g.flag}) to obtain the persistence diagrams of $\bbT^2\vee\bbS^3$ and $(\bbS^1\times\bbS^2)\vee\bbS^1$; see Fig.~\ref{fig:dgm of p. cup module}. Then we use the relationship between rank invariant and persistence diagram, i.e., Eqn.~\ref{eq:mobius I = sum dgm}, to obtain the rank invariant of persistent cup modules of $\bbT^2\vee\bbS^3$ and $(\bbS^1\times\bbS^2)\vee\bbS^1$. 
See Fig.~\ref{fig:torus_sphere_VR_flag_rank}.

\begin{figure}[H]
\centering
     \begin{tikzpicture}[scale=0.6]
    \begin{axis} [ 
    ticklabel style = {font=\large},
    axis y line=middle, 
    axis x line=middle,
    ytick={0.5,0.67,0.95},
    yticklabels={$\tfrac{\pi}{2}$,$\tfrac{2\pi}{3}$,$\pi$},
    xtick={0.5,0.58,0.95},
    xticklabels={$\tfrac{\pi}{2}$,$\zeta_3$, $\pi$},
    xmin=-0.015, xmax=1.1,
    ymin=0, ymax=1.1,]
    \addplot [mark=none] coordinates {(0,0) (1,1)};
    \addplot[barccolor,mark=*] (0, 0.67) circle (2pt) node[right,barccolor]{};
    \addplot[barccolor,mark=none] (0, 0.8) node[right,barccolor]{ \textsf{$\begin{pmatrix}
    2 & 0\\
    1& 1\\
    0& 0
    \end{pmatrix}$}};
    \addplot[barccolor,mark=*] (0, 0.58) circle (2pt) node[right,barccolor]{};
    \addplot[barccolor,mark=none] (0, 0.45) node[right,barccolor]{ \textsf{$\begin{pmatrix}
    2 & 0\\
    1& 1\\
    1& 0
    \end{pmatrix}$}};
    \addplot [black!40!white,mark=none,dashed, thin] coordinates {(0,0.58) (0.58,0.58)};
    \addplot [black!40!white,mark=none,dashed, thin] coordinates {(0.58,0) (0.58,0.58)};
    \addplot [thick,color=black!10!white,fill=black!10!white, 
                    fill opacity=0.4]coordinates {
            (0.58,0.95)
            (0.58,0.58)
            (0.95,0.95)
            (0.58,0.95)};
\node[mark=none] at (axis cs:0.68,0.21){$\dgm(\Phi(\bbT^2\vee\bbS^3))$};
    \end{axis}
    \end{tikzpicture}
    \hspace{1.5cm}
    \begin{tikzpicture}[scale=0.6]
    \begin{axis} [ 
    ticklabel style = {font=\large},
    axis y line=middle, 
    axis x line=middle,
    ytick={0.5,0.67,0.95},
    yticklabels={$\tfrac{\pi}{2}$,$\tfrac{2\pi}{3}$,$\pi$},
    xtick={0.5,0.6,0.95},
    xticklabels={$\tfrac{\pi}{2}$,$\zeta_2$, $\pi$},
    xmin=-0.015, xmax=1.1,
    ymin=0, ymax=1.1,]
    \addplot [mark=none] coordinates {(0,0) (1,1)};
    \addplot[barccolor,mark=*] (0, 0.67) circle (2pt) node[right,barccolor]{};
    \addplot[barccolor,mark=none] (0, 0.8) node[right,barccolor]{ \textsf{$\begin{pmatrix}
    2 & 0\\
    0& 0\\
    0& 0
    \end{pmatrix}$}};
    \addplot[barccolor,mark=*] (0, 0.6) circle (2pt) node[right,barccolor]{};
    \addplot[barccolor,mark=none] (0, 0.45) node[right,barccolor]{ \textsf{$\begin{pmatrix}
    2 & 0\\
    1& 0\\
    1& 1
    \end{pmatrix}$}};
    \addplot [black!40!white,mark=none,dashed, thin] coordinates {(0,0.6) (0.6,0.6)};
    \addplot [black!40!white,mark=none,dashed, thin] coordinates {(0.6,0) (0.6,0.6)};
    \addplot [thick,color=black!10!white,fill=black!10!white, 
                    fill opacity=0.4]coordinates {
            (0.6,0.95)
            (0.6,0.6)
            (0.95,0.95)
            (0.6,0.95)};
\node[mark=none] at (axis cs:0.68,0.21){$\dgm(\Phi((\bbS^1\times\bbS^2)\vee\bbS^1))$};
    \end{axis}
    \end{tikzpicture}
\caption{Persistence diagrams $\dgm\left(\Phi(\VR\left(\bbT^2\vee\bbS^3\right))\right)$ (left) and $\dgm\left(\Phi(\VR\left((\bbS^1\times\bbS^2)\vee\bbS^1\right))\right)$ (right) of persistent cup modules (up to degree $3$) arising from Vietoris-Rips filtrations of $\bbT^2\vee\bbS^3$ and $(\bbS^1\times\bbS^2)\vee\bbS^1$, respectively. These diagrams are obtained by combining Fig.~\ref{fig:ell barcodes} and Fig.~\ref{fig:ell barcodes 2}. See Ex.~\ref{sec:flag>cup}. 
} 
    \label{fig:dgm of p. cup module}
\end{figure}

\begin{proposition}
We have
\begin{equation}\label{eq:dE-rank-cup-module}
    d_{\mathrm{E}}\left(\rank\left(\Phi(\VR\left(\bbT^2\vee\bbS^3\right))\right),\rank\left(\Phi(\VR\left((\bbS^1\times\bbS^2)\vee\bbS^1\right))\right)\right)=\frac{\pi}{3}.
\end{equation}
\end{proposition}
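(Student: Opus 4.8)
The plan is to reduce the computation to the individual degree-$p$, depth-$\ell$ components of the two rank invariants, whose erosion distances are controlled by the bottleneck distances already recorded in Eqn.~(\ref{eq:ell-p-barcode}). Write $\bfJ_1:=\rank(\cupmodule(\VR(\bbT^2\vee\bbS^3)))$ and $\bfJ_2:=\rank(\cupmodule(\VR((\bbS^1\times\bbS^2)\vee\bbS^1)))$, and abbreviate $M^{p,\ell}_{\times}:=\deg_p(\cupmodule^\ell(\bbT^2\vee\bbS^3))$ and $M^{p,\ell}_{\vee}:=\deg_p(\cupmodule^\ell((\bbS^1\times\bbS^2)\vee\bbS^1))$. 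By Prop.~\ref{prop:depth-wise} and Eqn.~(\ref{eq:rank of g.flag}), each $\bfJ_i$ is the tuple $\big(\rank(M^{p,\ell}_{\bullet})\big)_{p,\ell\geq 1}$ of standard rank invariants, and the target poset $(\N^{\infty,\infty},\leq)^{\mathrm{op}}$ carries the entrywise order. Hence, by Defn.~\ref{def:de}, $\bfJ_1,\bfJ_2$ are $\epsilon$-eroded if and only if every component pair $\rank(M^{p,\ell}_\times),\rank(M^{p,\ell}_\vee)$ is $\epsilon$-eroded. Since $\epsilon$-erosion is monotone in $\epsilon$ (enlarging $\epsilon$ enlarges the shifted interval, on which an order-reversing rank invariant can only decrease), this reduces the problem to
\[
\de(\bfJ_1,\bfJ_2)=\max_{p,\ell\geq 1}\de\!\left(\rank(M^{p,\ell}_\times),\rank(M^{p,\ell}_\vee)\right).
\]

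For the upper bound I would bound each component erosion distance by the matching bottleneck distance. Applying Thm.~\ref{thm:stab-per-inv} to the dimension invariant gives $\de(\rank M,\rank N)\leq\di(M,N)$, and the classical isometry theorem $\di=\db$ for $1$-parameter modules then yields $\de(\rank(M^{p,\ell}_\times),\rank(M^{p,\ell}_\vee))\leq\db(\barc(M^{p,\ell}_\times),\barc(M^{p,\ell}_\vee))$ for every $(p,\ell)$. Taking the maximum and invoking Eqn.~(\ref{eq:ell-p-barcode}) gives $\de(\bfJ_1,\bfJ_2)\leq\max_{p,\ell}\db(\cdots)=\tfrac{\pi}{3}$.

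For the matching lower bound it suffices to exhibit one component whose erosion distance is at least $\tfrac{\pi}{3}$, and the natural choice is $(p,\ell)=(2,2)$. From Fig.~\ref{fig:ell barcodes 2}, $M^{2,2}_\times$ is the interval module on $(0,\tfrac{2\pi}{3}]$, while $M^{2,2}_\vee$ has empty barcode on $(0,\zeta_2)$: the degree-$2$ generator of $\bbS^1\times\bbS^2$ is not a cup product, so the degree-$2$ part of its $2$-fold product vanishes wherever $\VR_r\simeq(\bbS^1\times\bbS^2)\vee\bbS^1$, i.e.\ for $r\in(0,\zeta_2)$, and the undetermined region only permits bars born at parameter $\geq\zeta_2$. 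I would then fix $\epsilon<\tfrac{\pi}{3}$, choose $\eta>0$ with $\eta<\tfrac{\pi}{3}-\epsilon$, and set $I:=[\eta+\epsilon,\tfrac{2\pi}{3}-\eta-\epsilon]$. Its enlargement $[\,\eta,\tfrac{2\pi}{3}-\eta\,]$ lies in $(0,\tfrac{2\pi}{3}]$, so $\rank(M^{2,2}_\times)$ evaluates to $1$ there, whereas the left endpoint $\eta+\epsilon<\tfrac{\pi}{3}<\zeta_2$ forces $\rank(M^{2,2}_\vee)(I)=0$. This violates $\epsilon$-erosion, so $\de(\rank(M^{2,2}_\times),\rank(M^{2,2}_\vee))\geq\tfrac{\pi}{3}$, and by the reduction above $\de(\bfJ_1,\bfJ_2)\geq\tfrac{\pi}{3}$.

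The main obstacle I anticipate is justifying the reduction cleanly: making precise that the matrix-valued erosion distance is the supremum of its component erosion distances (requiring the entrywise order and the $\epsilon$-monotonicity of erosion), and pinning down the component inequality $\de\leq\db$ in a way that is internally consistent with the paper's conventions. The remaining care is in the lower-bound step, where one must verify that the distinguishing interval $I$ stays within the determined region of $M^{2,2}_\vee$, i.e.\ that its birth coordinate genuinely falls below $\zeta_2$ so that the undetermined gray region cannot supply a bar containing $I$.
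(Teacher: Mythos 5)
Your proof is correct, and it reaches the same value by a genuinely different route on the upper bound. The reduction $\de(\bfJ_1,\bfJ_2)=\sup_{p,\ell}\de\bigl(\rank(M^{p,\ell}_\times),\rank(M^{p,\ell}_\vee)\bigr)$ is valid: with the entrywise order on $\N^{\infty,\infty}$, an $\epsilon$-erosion of the matrix-valued invariants is exactly a simultaneous $\epsilon$-erosion of all components, and since the set of eroding $\epsilon$ for each component is an up-set, the infimum over the intersection is the supremum of the componentwise infima. Your lower bound is in substance the paper's own: the paper evaluates the full matrices at $I_0=[\tfrac{\pi}{3}-\delta,\tfrac{\pi}{3}+\delta]$ and observes they are non-comparable, and the entry responsible for the failure of $\bfJ_2(I_0)\geq\bfJ_1(I_0^\epsilon)$ is precisely your $(p,\ell)=(2,2)$ component (the degree-$2$ class of $\bbS^1\times\bbS^2$ is not a product of positive-degree classes, so $\deg_2(\cupmodule^2)$ vanishes on $(0,\zeta_2)$, while for $\bbT^2\vee\bbS^3$ it carries the bar $(0,\tfrac{2\pi}{3}]$); your explicit interval $I=[\eta+\epsilon,\tfrac{2\pi}{3}-\eta-\epsilon]$ makes this cleaner than the paper's ``non-comparable'' phrasing, which strictly speaking only needs one failing direction. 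Where you genuinely diverge is the upper bound: the paper argues directly that for $\epsilon>\tfrac{\pi}{3}$ every $\epsilon$-thickened interval has length exceeding $\tfrac{2\pi}{3}$, hence exceeds every bar and forces both rank invariants to vanish, so the erosion conditions hold trivially; you instead chain $\de\leq\di\leq\db$ componentwise and invoke Eqn.~(\ref{eq:ell-p-barcode}). Your route buys a reusable general principle, $\de(\rank(\cupmodule(\Xfunc)),\rank(\cupmodule(\Yfunc)))\leq\max_{p,\ell}\db$, linking the two stability statements of the section, but it imports the converse algebraic stability theorem $\di\leq\db$, which the paper never uses (it only cites the forward direction $\db\leq\di$); the paper's argument is more elementary and self-contained. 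Both are sound; just make sure to state the isometry-theorem input explicitly if you keep your version.
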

\begin{proof}
For notational simplicity, we denote 
\[\bfJ_{1}:=\rank\left(\Phi(\VR\left(\bbT^2\vee\bbS^3\right))\right)\text{ and }\bfJ_{2}:=\rank\left(\Phi(\VR\left((\bbS^1\times\bbS^2)\vee\bbS^1\right))\right).\] 
Suppose that $\bfJ_{1}$ and $\bfJ_{2}$ are $\epsilon$-eroded, which means $\bfJ_{1}(I)\geq \bfJ_{2}(I^\epsilon)$ and $\bfJ_{2}(I)\geq \bfJ_{1}(I^\epsilon)$, for all $I\in\Int$. We take $I_0:=[\frac{\pi}{3}-\delta,\frac{\pi}{3}+\delta]$ for $\delta$ sufficiently small, so that the associated point of $I_0$ in the upper-diagonal half plane is very close to the point $(\frac{\pi}{3},\frac{\pi}{3})$. Then, we have that for any $t<\frac{\pi}{3}-2\delta,$
\[\bfJ_{2}(I_0)= \begin{pmatrix}
    2 & 0\\
    1& 0\\
    1& 1
    \end{pmatrix}\text{ and } \bfJ_{1}(I_0^t)=\begin{pmatrix}
    2 & 0\\
    1& 1\\
    1& 0
    \end{pmatrix}\text{ or }\begin{pmatrix}
    2 & 0\\
    1& 1\\
    0& 0
    \end{pmatrix}\]
are non-comparable.
Therefore, in order for the inequality $\bfJ_{2}(I_0)\geq \bfJ_{1}(I_0^\epsilon)$ to hold, it must be true that $\epsilon\geq \frac{\pi}{3},$ implying that $d_{\mathrm{E}}(\bfJ_{1},\bfJ_{2})\geq\frac{\pi}{3}.$

Via a discussion similar to the proof of Prop.~\ref{prop:erosion-comp}, we see that for any $\epsilon>\frac{\pi}{3}$ and any $I\in \Int$, 
\[\bfJ_{1}(I^\epsilon)=\begin{pmatrix}
    0 & 0\\
    0& 0\\
    0& 0
    \end{pmatrix}\text{  and  }\bfJ_{2}(I^\epsilon)=\begin{pmatrix}
    0 & 0\\
    0& 0\\
    0& 0
    \end{pmatrix}.\] 
Thus, $d_{\mathrm{E}}(\bfJ_1,\bfJ_2)\leq\frac{\pi}{3}.$
\end{proof}

\medskip\noindent\textbf{Persistent homology.}
For the persistent homology of these two spaces $\bbT^2\vee\bbS^3$ and $(\bbS^1\times\bbS^2)\vee\bbS^1$, we have
\begin{itemize}
    \item $\bfH _*\left( \VR\left(\bbT^2\vee\bbS^3\right) \right)\big\vert_{(0,\zeta_3)}\cong \bfH _*\left( \VR\left((\bbS^1\times\bbS^2)\vee\bbS^1\right)\right)\big\vert_{(0,\zeta_3)}$, and
    \item $\bfH _*\left( \VR\left(\bbT^2\vee\bbS^3\right) \right)\big\vert_{(\pi,\infty)}= \bfH _*\left( \VR\left((\bbS^1\times\bbS^2)\vee\bbS^1\right)\right)\big\vert_{(\pi,\infty)}$ is trivial
\end{itemize}
Thus, for any degree $p$, we have
\begin{equation}
\label{eq:homology}
   \di\left( \bfH _p\left( \VR\left(\bbT^2\vee\bbS^3\right) \right), \bfH _p\left( \VR\left((\bbS^1\times\bbS^2)\vee\bbS^1\right)\right) \right)\leq \tfrac{\pi-\zeta_3}{2}. 
\end{equation}

\medskip\noindent\textbf{Comparison of the different invariants.}
In the example of $\bbT^2\vee\bbS^3$ and $(\bbS^1\times\bbS^2)\vee\bbS^1$, using the information we have about the underlying spaces, persistent homology, Eqn.~(\ref{eq:homology}), and persistent cup-length (or LS-category) invariants, Eqn.~(\ref{eq:dE-cup-cat}), have similar discriminating power. But the rank invariant of persistent cup modules, Eqn.~(\ref{eq:dE-rank-cup-module}), and the barcode of persistent $\ell$-cup modules, Eqn.~(\ref{eq:ell-p-barcode}), both provide a better approximation, $\frac{\pi}{6}$, of the Gromov-Hausdorff distance $ d_{\mathrm{GH}}\left(\bbT^2\vee\bbS^3,(\bbS^1\times\bbS^2)\vee\bbS^1\right)$ than the persistent homology and the persistent cup-length (or LS-category) invariants.

\subsection{Further discussion on persistent cup modules}

\medskip\noindent\textbf{Retrieving $\cupprod(\Xfunc)$ from persistent cup modules.} On the other hand, the persistent cup-length invariant can be computed from the barcode of persistent $\ell$-cup modules via the following proposition. 
\begin{proposition}
\label{prop:dgm_p,l}
Let $\Xfunc$ be a persistent space. Then, for any interval $[a,b]$,
\begin{align*}
    \cupprod(\Xfunc)([a,b])
    = & \max_{[c,d]\supseteq  [a,b]} \max \left\{\ell\in\N^+\mid [c,d]\in \barc\!\left(\cupmodule^\ell(\Xfunc)\right)\right\},
\end{align*}
with the convention that $\max\emptyset=0.$
\end{proposition}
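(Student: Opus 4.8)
The plan is to reduce the persistent cup-length invariant to a statement about the \emph{images} of the maps underlying the persistent $\ell$-cup modules, and then to translate the non-vanishing of such an image into the existence of a bar containing the interval. This mirrors the argument already used in the proof of Lem.~\ref{lem:de-de}.

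First I would fix an interval $[a,b]$ and analyze $\image\!\left(\cupmodule^\ell(\Xfunc)([a,b])\right)$. Since the $\ell$-fold product functor sends a graded-algebra morphism $f$ to $f^{\ell}$, whose image is generated by the $\ell$-fold products of elements of $\image(f)$, one has
$$\image\!\left(\cupmodule^\ell(\Xfunc)([a,b])\right) = \left(\image\!\left(\bfH^+( X_b)\to\bfH^+( X_a)\right)\right)^{\ell}.$$
Combining this with Defn.~\ref{def:cup-length}, Prop.~\ref{prop:len_of_basis}, and the length characterization used in Lem.~\ref{lem:de-de} (namely $\len(R)=\sup\{\ell\geq 1\mid (\bigcup_{p\geq 1}R_p)^{\ell}\neq\{0\}\}$), I would obtain
$$\cupprod(\Xfunc)([a,b]) = \sup\left\{\ell\in\N^+ \mid \image\!\left(\cupmodule^\ell(\Xfunc)([a,b])\right)\neq 0\right\},$$
with the convention $\sup\emptyset=0$, which also covers the case $\cupprod(\Xfunc)([a,b])=0$ (equivalently $\image(\bfH^+( X_b)\to\bfH^+( X_a))=0$).

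The second step translates non-vanishing of the image into barcode language. Because we assume throughout this section that persistent spaces have $\mathrm{q}$-tame persistent (co)homology, each degree-$p$ component $\deg_p\!\left(\cupmodule^\ell(\Xfunc)\right)$ is a $\mathrm{q}$-tame persistence module, hence interval-decomposable, and its rank invariant counts the bars containing the interval (cf. Rmk.~\ref{rem:tropical-Mobius-Inversion}). Since $\image\!\left(\cupmodule^\ell(\Xfunc)([a,b])\right)$ decomposes degree-wise, it is nonzero if and only if $\rank\!\left(\deg_p\!\left(\cupmodule^\ell(\Xfunc)\right)\right)([a,b])>0$ for some $p\geq 1$, which in turn holds if and only if some bar $[c,d]\in\barc\!\left(\deg_p\!\left(\cupmodule^\ell(\Xfunc)\right)\right)$ satisfies $[c,d]\supseteq[a,b]$. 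Recalling $\barc\!\left(\cupmodule^\ell(\Xfunc)\right)=\bigsqcup_{p\geq 1}\barc\!\left(\deg_p\!\left(\cupmodule^\ell(\Xfunc)\right)\right)$, this is exactly the assertion that there exists $[c,d]\supseteq[a,b]$ with $[c,d]\in\barc\!\left(\cupmodule^\ell(\Xfunc)\right)$.

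Finally I would combine the two steps and rearrange the nested suprema. The two displayed equivalences give
$$\cupprod(\Xfunc)([a,b]) = \sup\left\{\ell\in\N^+ \mid \exists\,[c,d]\supseteq[a,b]\text{ with }[c,d]\in\barc\!\left(\cupmodule^\ell(\Xfunc)\right)\right\}.$$
Writing the indexing set as the union over $[c,d]\supseteq[a,b]$ of $\{\ell\in\N^+\mid [c,d]\in\barc(\cupmodule^\ell(\Xfunc))\}$ and exchanging the order of the two suprema yields the claimed identity, with the convention $\max\emptyset=0$. The only genuinely delicate points, where I expect the bookkeeping to require care, are the graded-algebra identity $\image(f^{\ell})=(\image f)^{\ell}$ and the passage from non-vanishing of the graded image to the presence of a bar in the total barcode; both rely essentially on the $\mathrm{q}$-tameness hypothesis, which guarantees that the barcodes are well-defined and that the rank invariant genuinely counts bars.
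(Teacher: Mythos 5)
Your proposal is correct and follows essentially the same route as the paper's proof: the identity $\image(f^{\ell})=(\image f)^{\ell}$ for graded algebra morphisms, the length characterization $\cupprod(\Xfunc)([a,b])=\max\{\ell\mid\image((\bfH^{+}(X_b))^{\ell}\to(\bfH^{+}(X_a))^{\ell})\neq 0\}$, the translation of non-vanishing images into bars containing $[a,b]$, and the exchange of the two maxima. Your version merely spells out the $\mathrm{q}$-tameness and degree-wise decomposition justifications that the paper leaves implicit.
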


\begin{proof}
Let $[a,b]$ be an interval. 
Then, we compute:
\begin{align*}
&\cupprod(\Xfunc)([a,b]) \\
=& \len\left(\image \left(\bfH^*( X_b) \to \bfH^*( X_a) \right)\right)   \\
=& \max \left\{\ell\in\N^+\mid \left(\image \left(\bfH^{ + }( X_b) \to \bfH^{ + }( X_a) \right)\right)^{\ell}\neq 0\right\} \hspace{0.2cm}\text{(by Defn.~\ref{def:cup-length})}\\
=& \max \left\{\ell\in\N^+\mid \image \left((\bfH^{ + }( X_b) )^{\ell}\to (\bfH^{ + }( X_a) )^{\ell}\right)\neq 0\right\} \\
=& \max \left\{\ell\in\N^+\mid \text{there exists an interval }[c,d] \text{ s.t. } [c,d]\supseteq  [a,b] \text{ and } [c,d]\in \barc\!\left(\cupmodule^\ell(\Xfunc)\right)\right\} \\
=& \max_{[c,d]\supseteq  [a,b]} \max \left\{\ell\in\N^+\mid [c,d]\in \barc\!\left(\cupmodule^\ell(\Xfunc)\right)\right\}. 
\end{align*}
Here we applied the fact that for a graded algebra morphism $f:( A,+_ A,\bullet_ A)\to ( B,+_ A,\bullet_ B)$, $(\image(f))^{\ell}=\image(f^{\ell}: A^{\ell}\to  B^{\ell})$ because they are both generated by the set $\{f(a_1)\bullet\cdots\bullet f(a_\ell)\mid a_i\in  A\}.$
\end{proof} 

\begin{remark}
Prop.~\ref{prop:dgm_p,l} shows that the map $\bfJ(\Xfunc):\Int\to\N$ given by
\begin{center}
    $ [c,d]\mapsto \max \left\{\ell\in\N^+\mid [c,d]\in \barc\!\left(\cupmodule^\ell(\Xfunc)\right)\right\}.$
\end{center} recovers the persistent cup-length invariant in the same way as the persistent cup-length diagram, see Thm.~\ref{thm:tropical_mobius}. Unlike the persistent cup-length diagram, $\bfJ(\Xfunc)$ is independent of the choice of representative cocycles. 
\end{remark}

\medskip\noindent\textbf{Computation of persistent $\ell$-cup modules.} The barcode for persistent $\ell$-cup modules can be computed from any given set of representative cocycles for $\bfH^{ + }(\Xfunc)$ as follows. Let us denote by $V=\langle G\rangle$ the sub-vector space of $V$ generated by a set $G\subset V$ of vectors. 
\begin{proposition}
    Let $\sigma:=(\sigma_I)_{I\in \barc\!\left(\bfH^+(\Xfunc)\right)}$ be a set of representative cocycles for $\bfH^{ + }(\Xfunc)$ and let $\ell\geq1$ be any positive integer. Then:
  
  For each $\ell$, the persistence diagram of the persistent $\ell$-cup module $\left(\bfH^{ + }(\Xfunc)\right)^\ell$ can be obtained as the M\"obius inversion of the rank invariant of the persistent $\ell$-cup module given point-wisely, for any $a\leq b$ in $\bbR$, as the dimension of the vector space 
   $$\image\left(\Phi^\ell(X_b)\to\Phi^\ell(X_a)\right)=\left\langle\left[\sigma_{I_{i_1}}\smile\ldots\smile\sigma_{I_{i_\ell}}\right]_a 
   \mid
\mathrm{supp}\left(\sigma_{I_{i_1}}\smile\ldots\smile\sigma_{I_{i_\ell}}\right)\supseteq  [a,b]\right\rangle.$$
      
      Furthermore, for any cup-power $\ell$ and for any dimension $p$,  the persistence diagram $\barc\!\left(\deg_p\left(\cupmodule^\ell(\Xfunc)\right)\right)$ of the degree-$p$ persistent module  $\deg_p\left(\left(\bfH^{ + }(\Xfunc)\right)^\ell\right)$ can be obtained as the M\"obius inversion of the rank invariant of the persistent $\ell$-cup module given point-wisely, for any $a\leq b$ in $\bbR$, as the dimension of the vector space:
   \begin{align*}       &\image\left(\deg_p\left(\Phi^\ell(X_b)\right)\to\deg_p\left(\Phi^\ell(X_a)\right)\right)=\\
&=\left\langle\left[\sigma_{I_{i_1}}\smile\ldots\smile\sigma_{I_{i_\ell}}\right]_a
    \mid
\mathrm{supp}\left(\sigma_{I_{i_1}}\smile\ldots\smile\sigma_{I_{i_\ell}}\right)\supseteq  [a,b],\sum_{j=1}^{\ell} \dim(\sigma_{I_{i_j}})=p\right\rangle.
   \end{align*}
     \end{proposition}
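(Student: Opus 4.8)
The plan is to establish the two displayed descriptions of the image vector space; once these are in hand, the assertions about the rank invariant and the persistence diagram are immediate, since by Defn.~\ref{def:rank} the rank invariant of the persistent $\ell$-cup module is point-wise the dimension of the image of its transition map, and by Defn.~\ref{def:I-dgm} the associated persistence diagram is the M\"obius inversion of that rank invariant. So the entire content lies in identifying $\image\left(\cupmodule^\ell(X_b)\to\cupmodule^\ell(X_a)\right)$ with the explicit span of cup products of representative cocycles.

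First I would fix $a\leq b$ and abbreviate $f:=\bfH^+(\iota_a^b)\colon\bfH^+(X_b)\to\bfH^+(X_a)$ for the induced graded-algebra map, so that the transition map $\cupmodule^\ell(X_b)\to\cupmodule^\ell(X_a)$ is exactly $f^\ell$. As recalled in the proof of Prop.~\ref{prop:dgm_p,l}, one has $\image(f^\ell)=(\image(f))^\ell$, because both sides are generated by the set $\{f(u_1)\smile\cdots\smile f(u_\ell):u_j\in\bfH^+(X_b)\}$. Next I would invoke Prop.~\ref{prop:generators-of-ring}, which (restricted to positive degrees) identifies $\image(f)$ as the subspace spanned by $\{[\sigma_I]_a:[a,b]\subseteq I\in\barc\!\left(\bfH^+(\Xfunc)\right)\}$. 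Expanding each factor bilinearly, $(\image(f))^\ell$ is then spanned by the classes $[\sigma_{I_{i_1}}]_a\smile\cdots\smile[\sigma_{I_{i_\ell}}]_a$ with every $I_{i_j}\supseteq[a,b]$.

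To discard the vanishing generators I would apply the equivalence established in the proof of Thm.~\ref{thm:tropical_mobius}: for bars containing $[a,b]$, the product $[\sigma_{I_{i_1}}]_a\smile\cdots\smile[\sigma_{I_{i_\ell}}]_a$ is nonzero if and only if $\operatorname{supp}(\sigma_{I_{i_1}}\smile\cdots\smile\sigma_{I_{i_\ell}})\supseteq[a,b]$. Moreover, by Prop.~\ref{prop:support} the support is always contained in $\bigcap_j I_{i_j}$, so the single condition $\operatorname{supp}(\cdots)\supseteq[a,b]$ already forces each $I_{i_j}\supseteq[a,b]$; hence this support-containment condition alone carves out precisely the nonzero generators, yielding the first displayed identity. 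The degree-$p$ refinement is then purely a matter of grading: the cup product of cocycles of dimensions $d_1,\dots,d_\ell$ lies in degree $\sum_j d_j$, so intersecting with the degree-$p$ component simply restricts the generating products to those satisfying $\sum_{j=1}^{\ell}\dim(\sigma_{I_{i_j}})=p$, giving the second displayed identity.

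I expect the main obstacle to be the careful justification that removing the vanishing products still leaves a genuine spanning set of the image, i.e.\ that the passage from the full bilinear expansion to the support-indexed generators loses no nonzero class. This is exactly where the support characterization from the proof of Thm.~\ref{thm:tropical_mobius} is essential: one must verify that nonvanishing of the product at the \emph{single} parameter $a$ — which is what a nonzero generator records — upgrades to containment of the \emph{entire} interval $[a,b]$ in the support. This upgrade rests on the interval structure of the support together with the fact that its right end equals the right end of $\bigcap_j I_{i_j}\supseteq[a,b]$, both supplied by Prop.~\ref{prop:support}.
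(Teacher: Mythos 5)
Your argument is correct. The paper in fact states this proposition without supplying a proof (it is followed only by a remark), so there is no official argument to compare against; but your proof assembles exactly the ingredients the surrounding text points to — $\image(f^\ell)=(\image f)^\ell$ as in the proof of Prop.~\ref{prop:dgm_p,l}, the linear spanning set for $\image(f)$ from Prop.~\ref{prop:generators-of-ring}, and the equivalence ``nonzero at $a$ $\iff$ support contains $[a,b]$'' from the proof of Thm.~\ref{thm:tropical_mobius} together with Prop.~\ref{prop:support} — and the one genuinely delicate point (that the support condition alone both forces each $I_{i_j}\supseteq[a,b]$ and discards precisely the vanishing products) is handled correctly via the fact that $\operatorname{supp}\subseteq\bigcap_j I_{i_j}$ and that the support is an interval whose right endpoint is that of $\bigcap_j I_{i_j}$.
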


\begin{remark}
We can extract a basis of the vector space $\image\left(\Phi^\ell(X_b)\to\Phi^\ell(X_a)\right)$ from the spanning set given above  by first writing these vectors $\left[\sigma_{I_{i_1}}\smile\ldots\smile\sigma_{I_{i_\ell}}\right]_a$ as linear combinations of the basis elements $\{[\sigma_I]_a\}$, i.e.~as rows, and then row-reducing that matrix. An analogous argument holds for extracting a basis for  $\image\left(\deg_p\left(\Phi^\ell(X_b)\right)\to\deg_p\left(\Phi^\ell(X_a)\right)\right)$.
\end{remark}

\subsection{Persistent cup module as a 2-dimensional persistence module}
\label{sec:2d cup module}
The product of poset categories $(\N^+,\leq)$ and $(\bbR,\leq)$ is defined to be the Cartesian product $\N^+\times \bbR$ equipped with the partial order: $(\ell,t)\leq (\ell',t')$ if and only if $\ell\leq \ell'$ and $t\leq t'$. 

Let $\Xfunc$ be a filtration of topological spaces. The persistent cup module of $\Xfunc$ also has the structure of a 2-dimensional persistence module, because it can be viewed as a functor  
\begin{center}
    $ \cupmodule(\Xfunc):(\N^+,\leq)\times(\bbR,\leq) \to \GVect^{\mathrm{op}}$  with $(\ell,t)\mapsto (\bfH^{ + }\left( X_t)\right)^{\ell}$.
\end{center}
This is due to the factor that for any $\ell\leq \ell'$ and $t\leq t'$, we have the following commutative diagram:
\[
\begin{tikzcd}
(\bfH^{ + }\left( X_t)\right)^{\ell}  
& 
(\bfH^{ + }( X_t))^{\ell'}  
\ar[l,hook']
\\
(\bfH^{ + }( X_{t'}))^{\ell} 
\ar[u]
& (\bfH^{ + }( X_{t'}))^{\ell'},
\ar[u]
\ar[l,hook']
\end{tikzcd}
\]
where the row morphisms are natural linear inclusions and the column morphisms are induced maps of $\bfH^{ + }( X_t)\leftarrow \bfH^{ + }( X_{t'})$.

Unlike the $1$-dimensional case where indecomposable persistence module can be characterized by intervals, the indecomposables of 2-dimensional persistence modules are much more complicated and in most cases not finite \citep{leszczynski1994representation,leszczynski2000tame,bauer2020cotorsion}. A simple type of 2-dimensional persistence modules are those that can be decomposed into rectangle modules, but the persistent cup modules are not necessarily rectangle decomposable, see Ex.~\ref{ex:not rectangle} below.

\begin{example}[Persistent cup modules are not rectangle decomposable] 
\label{ex:not rectangle}
Recall from Fig.~\ref{fig:pinched_torus} the filtration $\Xfunc=\{ X_t\}_{t\geq0}$ of a pinched $2$-torus $\bbT^2$ and its total barcode. We directly compute the persistence module $(\bfH^{ + }(\Xfunc))^{ 2}$, and see that it is only non-zero in degree $2$ and its barcode consists of only one bar $[2,3)$. However, the barcode of $\bfH^2(\Xfunc)$ has a single bar $[2,\infty).$ Thus, the persistent cup module of $\Xfunc$ has an indecomposable (in degree $2$) given by 
\begin{center}
\begin{tikzcd}
\bfH^2(\Xfunc):&
\field
&
\field
\ar[l,"\id_\field" above]
&&
\field
\ar[ll,"\id_\field" above]
\\
(\bfH^{ + }(\Xfunc))^{ 2}:&
\field
\ar[u,"\id_\field" left]
&
\field
\ar[l,"\id_\field" below]
\ar[u,"\id_\field" right]
&&
0
\ar[ll,"0" below]
\ar[u,"0" right]
\end{tikzcd}
\end{center}
\end{example}

The persistent cup module has the special structure that its row maps are inclusions of vector spaces, implying that (1) each column is a persistence submodule of any column left to it and (2) each row is a flag of vector spaces (see Defn.~\ref{def:flag}).
From (1), we see that the barcodes of all the columns are closely related to each other: it follows from \citep[Thm.~4.2]{botnan2020decomposition} that if $\Mfunc$ is a persistence submodule of $\Nfunc$, i.e. there is a monomorphism from $\Mfunc$ to $\Nfunc$, then there is a canonical injection from the barcode $\barc(\Mfunc)$ to the barcode $\barc(\Nfunc)$ sending each bar $\linterval  b,d\rinterval \in \barc(\Mfunc)$ to $\linterval  b,d'\rinterval \in \barc(\Nfunc)$ for some $d'\leq d$. 
From (2), we were inspired to study the persistent cup modules as persistent (graded) flags in the previous section.

\section{Discussion}

\medskip\noindent
\textbf{Understanding the stability of persistent cup modules when represented as 2D modules.} Note that: (i) The right inequality in Rem.~\ref{rmk:Phi-stability} can also be interpreted as the stability of the persistence cup module as a 2D persistence module  $\bbR\times \N \to\Vect^{op}$: indeed we can consider the strict flow (a.k.a.~linear family of translations) $\Omega=(\Omega_\epsilon)_{\epsilon\geq0}$ on the poset $\bbR\times \N$, given by $(t,\ell)\mapsto (t+\epsilon,\ell)$. (ii) A flag is also a functor $\N\to \Vect^{op}$ and thus a persistent flag is also a functor $\bbR\to (\Vect^{op})^{\N}$; in particular the interleaving distance of persistent flags coincides with the restriction of the interleaving distance on $((\Vect^{op})^{\N})^{\bbR}$ on persistent flags. (iii) By construction of $\Omega$, the category with a flow $((\Vect^{op})^{\bbR\times \N},-\cdot\Omega)$ is flow-equivariantly isomorphic to  $(((\Vect^{op})^{\N})^{\bbR},-\cdot\mathbf{S})$ (where for each $\epsilon\geq0$, $\mathbf{S}_\epsilon:t\mapsto t+\epsilon$), in the sense of \citep[Defn.~4.2]{deSilva2018}, and thus the corresponding interleaving distances are isometric \citep[Thm.~4.3]{deSilva2018}.

\medskip
\noindent
\textbf{Extension to more general domain posets.} Note that all of our results in this paper can be generalized from the setting of persistence modules over $\bbR$ to the setting of persistence modules over more general posets $P$, for instance for $P=\bbR^n$. 
If we change $\bbR$ with general posets $P$, then Thm.~\ref{thm:stab-per-inv}'s statement and  proof (as well as the other theorems) can be changed and expanded: intervals $\Int$ and their thickening flow $\nabla$ (which gives rise to the erosion distance) should be replaced by $(\mathbf{con}(P),\mathbf{con}(\Omega))$, where $\mathbf{con}(\Omega)$ should be the thickening flow on the poset of connected subposets $\mathbf{con}(P)$ induced by the given flow $\Omega$ on $P$. We did not develop further the ideas towards this direction since this is not the main point of this paper.

\appendix

\begin{appendices}

\section{The category \texorpdfstring{$\Flag_{\operatorname{fin}}$}{flag}}

The direct sum in $\Flag_{\operatorname{fin}}$ is given by $\Vflag \oplus \Wflag :=(V\oplus W, \mathrm{F} (V\oplus W))$, where $\mathrm{F}_\ell(V\oplus W):=V_\ell\oplus W_\ell$ for each $\ell$. Let $f:\Vflag \to \Wflag $ be a morphism of flags. The kernel of $f$ is the vector space injection $\ker(f)\subseteq V$, where $\ker(f)$ is endowed with the filtation given by $\mathrm{F}_l\ker(f):=\ker(f)\cap V_\ell$. The cokernel of $f$ is the sujection of vector spaces $W\twoheadrightarrow \operatorname{coker}(f)=W/\image(f)$, where  $\operatorname{coker}(f)$ is endowed with the filtration given by $\mathrm{F}_l\operatorname{coker}(f):=\operatorname{coker}(f\vert_{V_\ell}:V_\ell\to W_\ell)=W_\ell/\image(f\vert_{V_\ell}).$

\begin{proposition}[Rank is complete] \label{prop:dim_complete}
Two flags $\Vflag $ and $\Wflag $ are isomorphic, if and only if they have the same dimension.
\end{proposition}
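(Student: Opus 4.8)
The plan is to prove both directions of the biconditional, with the forward direction being essentially immediate and the reverse direction requiring an explicit construction of an isomorphism from the dimension data.

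First I would dispatch the easy direction: if $\Vflag \cong \Wflag $, then an isomorphism $f:\Vflag \to \Wflag $ restricts to linear isomorphisms $V_\ell \to W_\ell$ for every $\ell$ (since a flag isomorphism must carry $V_\ell$ bijectively onto $W_\ell$; this follows because $f(V_\ell)\subseteq W_\ell$ and $f^{-1}(W_\ell)\subseteq V_\ell$ both hold for the inverse morphism, forcing equality $f(V_\ell)=W_\ell$). Hence $\dimension(V_\ell)=\dimension(W_\ell)$ for all $\ell$, so $\fdim(\Vflag)=\fdim(\Wflag)$.

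For the reverse direction, suppose $\fdim(\Vflag)=\fdim(\Wflag)=(m_1,m_2,\dots)$. The key tool is Prop.~\ref{prop:decomposition of Flag}, which tells us that $\Flag_{\operatorname{fin}}$ is Krull-Schmidt with indecomposables exactly the flags $\field_\star^n$ (for $n\geq 1$), whose dimension vector is $(\underbrace{1,\dots,1}_{n},0,\dots)$. By the proof of that proposition, any finite-depth flag decomposes as a finite direct sum $\bigoplus_n (\field_\star^n)^{\oplus c_n}$, and the multiplicities $c_n$ are determined by the consecutive differences of the dimension sequence, namely $c_n = m_n - m_{n+1}$. Thus both $\Vflag$ and $\Wflag$ decompose with the \emph{same} multiplicities $c_n$, since these are read off directly from the common dimension vector. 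Choosing such decompositions $\Vflag \cong \bigoplus_n (\field_\star^n)^{\oplus c_n} \cong \Wflag$ and composing the two isomorphisms yields the desired isomorphism $\Vflag \cong \Wflag$.

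The main obstacle, and the only place deserving care, is establishing that the multiplicities $c_n$ are genuinely functions of $\fdim$ alone and that the decomposition multiplicities are well-defined (uniqueness up to isomorphism of the Krull-Schmidt decomposition). This is already secured by Prop.~\ref{prop:decomposition of Flag}: the Krull-Schmidt property guarantees uniqueness of the multiset of indecomposable summands, and the explicit inductive procedure in its proof shows the decomposition of a flag with dimension $(m_1,m_2,\dots)$ strips off one copy of $\field_\star^n$ at each stage where the dimension drops, producing exactly $m_n - m_{n+1}$ copies of $\field_\star^n$. Consequently the whole argument reduces to bookkeeping with the dimension sequence, and no genuinely new computation beyond Prop.~\ref{prop:decomposition of Flag} is required.
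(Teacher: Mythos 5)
Your forward direction is the same as the paper's (and slightly more careful: you correctly use the inverse \emph{morphism} to force $f(V_\ell)=W_\ell$, which matters here since $\Flag_{\operatorname{fin}}$ is not abelian and a bijective morphism need not be an isomorphism, cf.\ Rmk.~\ref{rmk:non-abelian}). Your reverse direction, however, takes a genuinely different route: the paper constructs the isomorphism directly by induction on depth, choosing a basis of the deepest nonzero subspace $V_n$ and extending it successively to bases of $V_{n-1},\dots,V_1$ compatibly on both sides, whereas you factor through the Krull--Schmidt decomposition of Prop.~\ref{prop:decomposition of Flag} and observe that the multiplicities $c_n=m_n-m_{n+1}$ are functions of $\fdim$ alone. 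Your bookkeeping is right (the telescoping sum $\sum_{n\geq\ell}(m_n-m_{n+1})=m_\ell$ confirms the multiplicities), and as a formal consequence of Prop.~\ref{prop:decomposition of Flag} the argument is valid.

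The one caveat you should be aware of is a latent circularity in the paper's logical ordering. The paper's proof of Prop.~\ref{prop:decomposition of Flag} begins with the unproved assertion $\Vflag\cong(\field^{m_1}\supseteq\cdots\supseteq\field^{m_k}\supseteq 0\supseteq\cdots)$, i.e.\ that an arbitrary flag is isomorphic to the standard flag with the same dimension vector --- which is precisely an instance of the ``same dimension $\Rightarrow$ isomorphic'' direction you are trying to prove. So the decomposition theorem, as proved in the paper, already presupposes the content of Prop.~\ref{prop:dim_complete}, and deriving the latter from the former inverts the intended dependency rather than discharging it. This is repairable: the splitting $\Vflag\cong\Vflag'\oplus\field_\star^n$ can be established independently, but doing so honestly comes down to the same basis-extension construction the paper uses for Prop.~\ref{prop:dim_complete}. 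In short, your argument relocates the essential construction rather than avoiding it; to make your proof self-contained you would either supply that construction or note explicitly that Prop.~\ref{prop:decomposition of Flag} must first be proved without appeal to completeness of $\fdim$.
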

\begin{proof}
If two flags $\Vflag $ and $\Wflag $ are isomorphic, then there are morphisms $f:\Vflag \to \Wflag $ and $g:\Wflag \to \Vflag $ such that $g\circ f=\id_{\Vflag }$ and $f\circ g=\id_{\Wflag }$. 
It immediately follows that for any $l\in\N^+$, $f\vert_{\mathrm{F}_\ell V}$ and $g\vert_{\mathrm{F}_\ell W}$ induce an isomorphism between $\mathrm{F}_\ell V$ and $\mathrm{F}_\ell W$. Thus, $\dimension(V_\ell)=\dimension(W_\ell)$ for any $\ell$. 

Conversely, we first consider the case of finite-depth flags. If two finite-depth flags $\Vflag $ and $\Wflag $ have the same dim, then one can construct an isomorphism between them inductively. Start with the smallest space $\mathrm{F}_n V$ and $\mathrm{F}_n W$ in each flag. Because $\mathrm{F}_n V$ and $\mathrm{F}_n W$ have the same dimension, we can construct an isomorphism between them. Extend this isomorphism to an isomorphism between $\mathrm{F}_{n-1} V$ and $\mathrm{F}_{n-1} W$. In the case of infinite-depth flags, because we are considering flags of finite-dimensional vector spaces, every flag stabilizes in finitely many steps. Thus, we can apply a similar argument as in the case of finite-depth flags.
\end{proof}
\end{appendices}

\section*{Conflict of Interest}
On behalf of all authors, the corresponding author states that there is no conflict of interest.

\nocite{label}
\bibliography{sn-bibliography}

\end{document}